\documentclass[reqno]{amsart}
\usepackage[latin9]{inputenc}
\usepackage{xcolor}
\usepackage{enumitem}
\usepackage{amstext}
\usepackage{amsthm}
\usepackage{amssymb}
\usepackage{esint}
\PassOptionsToPackage{normalem}{ulem}
\usepackage{ulem}
\usepackage[unicode=true,pdfusetitle,
 bookmarks=true,bookmarksnumbered=false,bookmarksopen=false,
 breaklinks=false,pdfborder={0 0 0},pdfborderstyle={},backref=false,colorlinks=true]
 {hyperref}

\makeatletter

\providecommand{\tabularnewline}{\\}
\providecolor{lyxadded}{rgb}{0,0,1}
\providecolor{lyxdeleted}{rgb}{1,0,0}

\DeclareRobustCommand{\lyxsout}[1]{\ifx\\#1\else\sout{#1}\fi}

\numberwithin{equation}{section}
\numberwithin{figure}{section}
\theoremstyle{plain}
\newtheorem{thm}{\protect\theoremname}
\theoremstyle{definition}
\newtheorem{defn}[thm]{\protect\definitionname}
\theoremstyle{remark}
\newtheorem{rem}[thm]{\protect\remarkname}
\theoremstyle{plain}
\newtheorem{lem}[thm]{\protect\lemmaname}
\theoremstyle{definition}
\newtheorem{example}[thm]{\protect\examplename}
\theoremstyle{plain}
\newtheorem{cor}[thm]{\protect\corollaryname}

\usepackage{pdfsync}
\usepackage{graphics}
\usepackage{epstopdf}
\usepackage[all]{xy}
\usepackage{amscd}
\usepackage{enumitem}
\usepackage{mathtools}
\setlist[enumerate]{leftmargin=*,label=(\roman*),align=left}

\makeatletter
\@namedef{subjclassname@2020}{%
  \textup{2020} Mathematics Subject Classification}
\makeatother


\newcommand{\xyR}[1]{ \makeatletter
\xydef@\xymatrixrowsep@{#1} \makeatother} 
\newcommand{\xyC}[1]{ \makeatletter
\xydef@\xymatrixcolsep@{#1} \makeatother} 
\entrymodifiers={++[ ][F]} 

\newcommand{\ra}{\longrightarrow}

\newcommand{\field}[1]{\mathbb{#1}}
\newcommand{\R}{\field{R}} 
\newcommand{\N}{\field{N}} 




\newcommand{\eps}{\varepsilon} 
\renewcommand{\phi}{\varphi}
\newcommand{\diff}[1]{\ifmmode\mathchoice{\hbox{\rm d}#1}  
 {\hbox{\rm d}#1}  
 {\scalebox{0.75}{$\hbox{\rm d}#1$}}  
 {\scalebox{0.35}{$\hbox{\rm d}#1$}}  
 \fi} 

\newcommand{\abs}[2][\empty]{\ifx#1\empty\left|#2\right|%
\else#1\vert #2 #1\vert\fi}


\newcommand{\Coo}{\mbox{\ensuremath{\mathcal{C}}}^{\infty}} 





\newcommand{\Rtil}{\widetilde \R} 



\newcommand{\frontRise}[2]{\ifmmode\mathchoice{{\vphantom{#1}}^{\scalebox{0.6}{$#2$}}}  
 {{\vphantom{#1}}^{\scalebox{0.56}{$#2$}}}  
 {{\vphantom{#1}}^{\scalebox{0.47}{$#2$}}}  
 {{\vphantom{#1}}^{\scalebox{0.35}{$#2$}}}\fi} 
\newcommand{\RC}[1]{\frontRise{\R}{#1}\Rtil}
\newcommand{\rcrho}{\RC{\rho}}
\newcommand{\rti}{\RC{\rho}}

\newcommand{\hyperN}[1]{	\frontRise{\N}{#1}\widetilde{\N}}
\newcommand{\hypNr}{\hyperN{\rho}}
\newcommand{\hypNs}{\hyperN{\sigma}}
\newcommand{\nint}{\text{\rm ni}}
\newcommand{\hyperlimarg}[3]{\mathchoice{\frontRise{\lim}{\raisebox{-0.05em}{$#1\hspace{-0.67em}$}}\lim_{#3\in \hyperN{#2}\,}}
{\frontRise{\lim}{#1\hspace{-0.25em}}\lim_{#3\in \hyperN{#2}\,}}
{\frontRise{\lim}{#1\hspace{-0.25em}}\lim_{#3\in \hyperN{#2}\,}}
{\frontRise{\lim}{#1\hspace{-0.25em}}\lim_{#3\in \hyperN{#2}\,}}}
\newcommand{\hyperlim}[2]{\hyperlimarg{#1}{#2}{n}}

\newcommand{\hypersumarg}[3]{\mathchoice{\frontRise{\sum}{\raisebox{-0.2em}{$#1\hspace{-0.67em}$}}\sum_{#3\in \hyperN{#2}\,}}
{\frontRise{\sum}{#1\hspace{-0.25em}}\sum_{#3\in \hyperN{#2}\,}}
{\frontRise{\sum}{#1\hspace{-0.25em}}\sum_{#3\in \hyperN{#2}\,}}
{\frontRise{\sum}{#1\hspace{-0.25em}}\sum_{#3\in \hyperN{#2}\,}}}
\newcommand{\parthypersumarg}[4]{\mathchoice{\frontRise{\sum}{\raisebox{-0.2em}{$#1\hspace{-2.2em}$}}\sum_{#3\in \hyperN{#2}_{#4}\,}}
{\frontRise{\sum}{#1\hspace{-0.25em}}\sum_{#3\in \hyperN{#2}_{#4}\,}}
{\frontRise{\sum}{#1\hspace{-0.25em}}\sum_{#3\in \hyperN{#2}_{#4}\,}}
{\frontRise{\sum}{#1\hspace{-0.25em}}\sum_{#3\in \hyperN{#2}_{#4}\,}}}
\newcommand{\hypersum}[2]{\hypersumarg{#1}{#2}{n}}
\newcommand{\subzero}{\subseteq_{0}}

\newcommand{\rcrhou}{\rcrho_{\text{\rm u}}}

\newcommand{\sbpt}[1]{#1_{\text{\rm s}}}

\newcommand{\frontRiseDown}[3]{\ifmmode\mathchoice{{\vphantom{#1}}^{\scalebox{0.6}{$#2$}}_{\scalebox{0.6}{$#3$}}}  
 {{\vphantom{#1}}^{\scalebox{0.56}{$#2$}}_{\scalebox{0.56}{$#3$}}}  
 {{\vphantom{#1}}^{\scalebox{0.47}{$#2$}}_{\scalebox{0.47}{$#3$}}}  
 {{\vphantom{#1}}^{\scalebox{0.35}{$#2$}}_{\scalebox{0.35}{$#3$}}}\fi} 
\newcommand{\RCud}[2]{\frontRiseDown{\R}{#1}{#2}\Rtil}
\newcommand{\rcrhos}{\RCud{\rho}{\sigma}_{\text{\rm s}}}
\newcommand{\gsfud}[2]{\frontRiseDown{\mathcal{G}}{#1}{#2}\mathcal{GC}^{\infty}}


\newcommand{\DIff}{ \quad\;\; :\!\iff \quad } 









\makeatother

\providecommand{\corollaryname}{Corollary}
\providecommand{\definitionname}{Definition}
\providecommand{\examplename}{Example}
\providecommand{\lemmaname}{Lemma}
\providecommand{\remarkname}{Remark}
\providecommand{\theoremname}{Theorem}

\begin{document}

\title[Hyperseries in the non-Archimedean ring of CGN]{Hyperseries in the non-Archimedean ring of Colombeau generalized
numbers}
\author{Diksha Tiwari \and Paolo Giordano}
\thanks{D.~Tiwari has been supported by grant P 30407 of the Austrian Science
Fund FWF}
\address{\textsc{University of Vienna, Austria}}
\email{diksha.tiwari@univie.ac.at}
\thanks{P.~Giordano has been supported by grants P30407, P34113, P33538 of
the Austrian Science Fund FWF}
\address{\textsc{University of Vienna, Vienna, Austria}}
\email{paolo.giordano@univie.ac.at}
\subjclass[2020]{46F-XX, 46F30, 26E30}
\keywords{Colombeau generalized numbers, non-Archimedean rings, generalized
functions.}
\begin{abstract}
This article is the natural continuation of the paper: Mukhammadiev
A.~et al \emph{Supremum, infimum and hyperlimits of Colombeau generalized
numbers} in this journal. Since the ring $\rcrho$ of Robinson-Colombeau
is non-Archimedean and Cauchy complete, a classical series $\sum_{n=0}^{+\infty}a_{n}$
of generalized numbers $a_{n}\in\rcrho$ is convergent \emph{if} and
only if $a_{n}\to0$ in the sharp topology. Therefore, this property
does not permit us to generalize several classical results, mainly
in the study of analytic generalized functions (as well as, e.g.,
in the study of sigma-additivity in integration of generalized functions).
Introducing the notion of hyperseries, we solve this problem recovering
classical examples of analytic functions as well as several classical
results.
\end{abstract}

\maketitle

\section{Introduction}

In this article, the study of supremum, infimum and hyperlimits of
Colombeau generalized numbers (CGN) we carried out in \cite{MTAG}
is applied to the introduction of a corresponding notion of hyperseries.
In \cite{MTAG}, we recalled that $(x_{n})_{n\in\N}\in\Rtil^{\N}$
is a Cauchy sequence if and only if $\lim_{n\to+\infty}\left|x_{n+1}-x_{n}\right|=0$
(in the sharp topology). As a consequence, a series of CGN
\begin{equation}
\sum_{n\in\N}a_{n}\text{ converges}\ \iff\ a_{n}\to0\text{ (in the sharp topology)}.\label{eq:convNonArch}
\end{equation}
Once again, this is a well-known property of every ultrametric space,
cf., e.g., \cite{Kob96}. The point of view of the present work is
that in a non-Archimedean ring such as $\rcrho$, the notion of hyperseries
$\sum_{n\in\hyperN{\rho}}a_{n}$, i.e.~where we sum over the set
of hyperfinite natural numbers $\hyperN{\rho}$, yields results which
are more closely related to the classical ones, e.g.~in studying
analytic functions, sigma additivity and limit theorems for integral
calculus, or in possible generalization of the Cauchy-Kowalevski theorem
to generalized smooth functions (GSF; see e.g.~\cite{GKV}).

Considering the theory of analytic CGF as developed in \cite{PiScVa09}
for the real case and in \cite{Ver08} for the complex one, it is
worth to mention that several properties have been proved in both
cases: closure with respect to composition, integration over homotopic
paths, Cauchy integral theorem, existence of analytic representatives
$u_{\eps}$, a real analytic CGF is identically zero if it is zero
on a set of positive Lebesgue measure, etc. ~(cf.~\cite{Ver08,PiScVa09,ObPiVa07}
and references therein). On the other hand, even if in \cite{Ver08}
it is also proved that each complex analytic CGF can be written as
a Taylor series, necessarily this result holds only in an infinitesimal
neighborhood of each point. The impossibility to extend this property
to a finite neighborhood is due to \eqref{eq:convNonArch} and is
hence closely related to the approach we follow in the present article.

We refer to \cite{MTAG} for notions such as the ring of Robinson-Colombeau,
subpoints, hypernatural numbers, supremum, infimum and hyperlimits.
See also \cite{LB-Gio17} for a more general approach to asymptotic
gauges. In the present paper, we focus only on examples and properties
related to hyperseries, postponing those about analytic functions,
integral calculus and the Cauchy-Kowalevski theorem to subsequent
works. Once again, the ideas presented in the present article, which
needs only \cite{MTAG} as prior knowledge, can surely be useful to
explore similar ideas in other non-Archimedean Cauchy complete settings,
such as \cite{BeMa19,BeLuB15,Sha13,Kob96}.

\section{\label{sec:Hyperseries}Hyperseries and their basic properties}

\subsection{Definition of hyperfinite sums and hyperseries}

In order to define these notions, the main idea, like in the Archimedean
case $\R$, is to reduce the notion of hyperseries to that of hyperlimit.
Since to take an hyperlimit we have to consider two gauges $\rho$,
$\sigma$, it is hence natural to consider the same assumption for
hyperseries. However, in a non-Archimedean setting, the aforesaid
main idea implies that the main problem is not in defining hyperseries
itself, but already in defining what an hyperfinite sum is. In fact,
if $(a_{i})_{i\in\hypNs}$ is a family of generalized numbers of $\rti$
indexed in $\hypNs$, it is not even clear what $\sum_{i=0}^{4}a_{i}$,
means because there are infinite $i\in\hypNs$ such that $0\le i\le4$.
In general, in a sum of the form $\sum_{i=0}^{N}a_{i}$ with $N\in\hypNs$,
the number of addends depends on how small is the gauge $\sigma$
and hence how large can be $N\le\diff{\sigma}^{-R}$ (for some $R\in\N$).
The problem is simplified if we consider only \emph{ordinary} sequences
$(a_{n})_{n\in\N}$ of CGN in $\rcrho$ and an $\eps$-wise definition
of hyperfinite sum. In order to accomplish this goal, we first need
to extend the sequence
\begin{equation}
\left(N\in\N\mapsto\sum_{n=0}^{N}a_{n}\in\rcrho\right):\N\ra\rcrho\label{eq:extensionProblem}
\end{equation}
of partial sums with summands $a_{n}\in\rcrho$, $n\in\N$, to the
entire set $\hyperN{\sigma}$ of hyperfinite numbers. This problem
is not so easy to solve: in fact, the sequence of representatives
of zero: $a_{n\eps}=0$ if $\eps\le\frac{1}{n}$ and $a_{n\eps}=(1-\eps)^{-n}$
otherwise, where $n\in\N_{>0}$, satisfies
\begin{equation}
\sum_{n=0}^{N_{\eps}}a_{n\eps}=\sum_{n=\left\lceil \frac{1}{\eps}\right\rceil }^{N_{\eps}}\left(\frac{1}{1-\eps}\right)^{n}\quad\forall\eps\in(0,1)_{\R},\label{eq:inftySeries}
\end{equation}
but if $N_{\eps}\to+\infty$ this sum diverges to $+\infty$ because
$\frac{1}{1-\eps}>1$; moreover, for suitable $N_{\eps}$, the net
in \eqref{eq:inftySeries} is of the order of $\left(\frac{1}{1-\eps}\right)^{N_{\eps}}$,
which in general is not $\rho$-moderate. To solve this first problem,
we have at least two possibilities: the first one is to consider a
Robinson-Colombeau ring defined by the index set $\N\times I$ and
ordered by $(n,\eps)\le(m,e)$ if and only if $\eps\le e$. In this
solution, moderate representatives are nets $(a_{n\eps})_{n,\eps}\in\R^{\N\times I}$
satisfying the uniformly moderate condition
\begin{equation}
\exists Q\in\N\,\forall^{0}\eps\,\forall n\in\N:\ |a_{n\eps}|\le\rho_{\eps}^{-Q}.\label{eq:uniformlyModerate}
\end{equation}
Negligible nets are $(a_{n\eps})_{n,\eps}\in\R^{\N\times I}$ such
that
\begin{equation}
\forall q\in\N\,\forall^{0}\eps\,\forall n\in\N:\ |a_{n\eps}|\le\rho_{\eps}^{q}.\label{eq:uniformlyNeglibible}
\end{equation}
Note that, with respect to the aforementioned directed order relation,
for any property $\mathcal{P}$, we have
\[
\left(\exists(n_{0},\eps_{0})\in\N\times I\,\forall(n,\eps)\le(n_{0},\eps_{0}):\ \mathcal{P}\left\{ n,\eps\right\} \right)\ \iff\ \forall^{0}\eps\,\forall n\in\N:\ \mathcal{P}\left\{ n,\eps\right\} .
\]
The main problem with this solution is that it works to define hyperfinite
sums only if
\begin{equation}
\exists Q_{\sigma,\rho}\in\N\,\forall^{0}\eps:\ \sigma_{\eps}\ge\rho_{\eps}^{Q_{\sigma,\rho}}.\label{eq:sigma-rho-min}
\end{equation}
Assume, indeed, that \eqref{eq:uniformlyModerate} holds for all $\eps\le\eps_{0}$,
so that if $N=[N_{\eps}]\in\hypNs$, $N_{\eps}\in\N$, we have
\begin{equation}
\left|\sum_{n=0}^{N_{\eps}}a_{n\eps}\right|\le N_{\eps}\cdot\rho_{\eps}^{-Q}\le\text{\ensuremath{\sigma}}_{\eps}^{-R}\cdot\rho_{\eps}^{-Q}\le\rho_{\eps}^{-RQ_{\sigma,\rho}-Q},\label{eq:hyperfiniteSumR_u}
\end{equation}
where we assumed that $N_{\eps}\le\sigma_{\eps}^{-R}$ for these $\eps\le\eps_{0}$.
This is intuitively natural since the Robinson-Colombeau ring defined
by the aforementioned order relation depends only on the gauge $\rho$,
whereas the moderateness in \eqref{eq:hyperfiniteSumR_u} depends
on the product $N_{\eps}\cdot\rho_{\eps}^{-Q}\le\sigma_{\eps}^{-R}\cdot\rho_{\eps}^{-Q}$
between how many addends we are taking (that depends on $\sigma$)
and the uniform moderateness property \eqref{eq:uniformlyModerate}.

This first solution has three drawbacks: The first one, as explained
above, is that in this ring we can consider hyperfinite sum only if
the relation \eqref{eq:sigma-rho-min} between the two considered
gauges holds. The second one is that we cannot consider divergent
hyperseries such as e.g.~$\hypersum{\rho}{\sigma}n$ because $n\le\rho_{\eps}^{-Q}$
do not hold for $\eps$ small and uniformly for all $n\in\N$. The
third one is that we would like to apply \cite[Thm.~28]{MTAG} to
prove the convergence of hyperseries by starting from the corresponding
converging series of $\eps$-representatives; however, \cite[Thm.~28]{MTAG}
do not allow us to get the limitation \eqref{eq:sigma-rho-min} (and
later, we will see that in general the limitation \eqref{eq:sigma-rho-min}
is impossible to achieve: see just after the proof of Thm.~\ref{thm:epsConv}).

The second possibility to extend \eqref{eq:extensionProblem} to $\hypNs$,
a possibility that depends on two gauges but works for any $\sigma$
and $\rho$, is to say that hyperseries can be computed only for representatives
$(a_{n\eps})_{n\eps}$ which are \emph{moderate over hypersums}, i.e.~to
ask
\begin{equation}
\forall N\in\hyperN{\sigma}:\ \left(\sum_{n=0}^{\nint{(N)}_{\eps}}a_{n\eps}\right)\in\R_{\rho}.\label{eq:moderateHypseries}
\end{equation}
For example, if $k=[k_{\eps}]\in(0,1)\subseteq\rcrho$, then $\left(k_{\eps}^{n}\right)_{n,\eps}$
is moderate over hypersums (see also Example~\ref{exa:GeomExpSeries}
below); but also the aforementioned $a_{n\eps}=n$ for all $n\in\N$
and $\eps\in I=(0,1]_{\R}$ is clearly of the same type if $\R_{\sigma}\subseteq\R_{\rho}$.

However, if $a_{n}=[a_{n\eps}]=[\bar{a}_{n\eps}]$ for all $n\in\N$
are two sequences which are moderate over hypersums, does the equality
$\left[\sum_{n=0}^{\nint{(N)}_{\eps}}a_{n\eps}\right]=\left[\sum_{n=0}^{\nint{(N)}_{\eps}}\bar{a}_{n\eps}\right]$
hold? The answer is negative: let $a_{n\eps}:=0$ if $\eps<\frac{1}{n+1}$
and $a_{n\eps}:=1$ otherwise, then $[a_{n\eps}]=0$ are representatives
of zero, but the corresponding series is not zero:
\begin{equation}
\forall N=[N_{\eps}]\in\hyperN{\rho}:\ \sum_{n=0}^{N_{\eps}}a_{n\eps}=\sum_{n=\left\lceil \frac{1}{\eps}\right\rceil -1}^{N_{\eps}}1=N_{\eps}-\left\lceil \frac{1}{\eps}\right\rceil +2.\label{eq:reprCountEx2}
\end{equation}
Note that both examples \eqref{eq:inftySeries} and \eqref{eq:reprCountEx2}
show that in dealing with hypersums, also the values $a_{n\eps}$
for ``$\eps$ large'' may play a role. We can then proceed like
for \eqref{eq:moderateHypseries} by saying that $(a_{n\eps})_{n,\eps}\sim_{\sigma\rho}(\bar{a}_{n\eps})_{n,\eps}$
if
\begin{equation}
\forall M,N\in\hyperN{\sigma}:\ \left(\sum_{n=\nint{(N)}_{\eps}}^{\nint{(M)}_{\eps}}\left(a_{n\eps}-\bar{a}_{n\eps}\right)\right)\sim_{\rho}0.\label{eq:negligibleHyperseries}
\end{equation}

The idea of this second solution is hence to consider hyperseries
only for representatives which are moderate over hypersums modulo
the equivalence relation \eqref{eq:negligibleHyperseries}.

In the following definition, we will consider both solutions:
\begin{defn}
\label{def:RCringHyperseries}The quotient set $\left(\R^{\N\times I}\right)_{\sigma\rho}/\sim_{\sigma\rho}\,=:\rcrhos$
of the set $\left(\R^{\N\times I}\right)_{\sigma\rho}$ of nets which
are $\sigma$, $\rho$-\emph{moderate over hypersums}
\[
(a_{n\eps})_{n,\eps}\in\left(\R^{N\times I}\right)_{\sigma\rho}\ :\iff\ \forall N\in\hyperN{\sigma}:\ \left(\sum_{n=0}^{\nint{(N)}_{\eps}}a_{n\eps}\right)\in\R_{\rho},
\]
by \emph{$\sigma$, $\rho$-negligible nets
\[
(a_{n\eps})_{n,\eps}\sim_{\sigma\rho}(\bar{a}_{n\eps})_{n,\eps}\ :\iff\ \forall M,N\in\hyperN{\sigma}:\ \left(\sum_{n=\nint{(N)}_{\eps}}^{\nint{(M)}_{\eps}}\left(a_{n\eps}-\bar{a}_{n\eps}\right)\right)\sim_{\rho}0,
\]
}is called the \emph{space of sequences for hyperseries}. Nets of
$\R^{\N\times I}$ are denoted as $(a_{n\eps})_{n,\eps}$ or simply
as $(a_{n\eps})$; equivalence classes of $\rcrhos$ are denoted as
$(a_{n})_{n}=[(a_{n\eps})_{n,\eps}]=[a_{n\eps}]\in\rcrhos$.\\
The ring of Robinson-Colombeau defined by the index set $\N\times I$
ordered by
\begin{equation}
(n,\eps)\le(m,e)\ :\iff\eps\le e,\label{eq:orderNtimesI}
\end{equation}
is denoted as $\rcrhou$. We recall that $\rcrhou$ is the quotient
space of moderate nets $(a_{n\eps})_{n,\eps}$ (i.e.~\eqref{eq:uniformlyModerate}
holds) modulo negligible nets (i.e.~\eqref{eq:uniformlyNeglibible}
holds) with respect to the directed order relation \eqref{eq:orderNtimesI}.
\end{defn}

\noindent The letter 'u' in $\rcrhou$ recalls that in this case we
are considering \emph{uniformly }moderate (and \emph{uniformly }negligible)
sequences. The letter 's' in $\rcrhos$ recalls that this is the space
of \emph{sequences }for hyperseries. We recall that $\rcrhou$ is
a ring with respect to pointwise multiplication $(a_{n})_{n}\cdot(b_{n})_{n}=\left[\left(a_{n\eps}\cdot b_{n\eps}\right)_{n\eps}\right]$.
When we want to distinguish equivalence classes in these two quotient
sets, we use the notations
\[
(a_{n})_{n}=\left[a_{n\eps}\right]_{\text{s}}\in\rcrhos,\quad\left\{ a_{n}\right\} _{n}=\left[a_{n\eps}\right]_{\text{u}}\in\rcrhou.
\]
Note explicitly that, on the contrary with respect to $\rcrhos$,
the ring $\rcrhou$ depends on only one gauge $\rho$.

We already observed the importance to consider suitable relations
between the two gauges $\sigma$ and $\rho$:
\begin{defn}
\label{def:*rel}Let $\sigma$, $\rho$ be two gauges, then we define
\begin{align*}
\sigma & \ge\rho^{*}\ :\iff\ \exists Q_{\sigma,\rho}\in\N\,\forall^{0}\eps:\ \sigma_{\eps}\ge\rho_{\eps}^{Q_{\sigma,\rho}}.\\
\sigma & \le\rho^{*}\ :\iff\ \exists Q_{\sigma,\rho}\in\N_{>0}\,\forall^{0}\eps:\ \sigma_{\eps}\le\rho_{\eps}^{Q_{\sigma,\rho}}.
\end{align*}
(Note that if $\sigma\ge\rho^{*}$, then $Q_{\sigma,\rho}>0$ since
$\sigma_{\eps}\to0$).
\end{defn}

\noindent It is easy to prove that $(-)\le(-)^{*}$ is a reflexive,
transitive and antisymmetric relation, in the sense that $\sigma\le\rho^{*}$
and $\rho\le\sigma^{*}$ imply $\sigma_{\eps}=\rho_{\eps}$ for $\eps$
small, whereas $(-)\ge(-)^{*}$ is only a partial order. Clearly,
$\sigma\ge\rho^{*}$ is equivalent to the inclusion of $\sigma$-moderate
nets $\R_{\sigma}\subseteq\R_{\rho}$, whereas $\sigma\le\rho^{*}$
is equivalent to $\R_{\sigma}\supseteq\R_{\rho}$.

It is well-known that there is no natural product between two ordinary
series in $\R$ and involving summations with only one index set in
$\N$ (see e.g.~\cite{BoKh06} and Sec.~\ref{sec:Cauchy-product}
below). This is the main motivation to consider only a structure of
$\rcrho$-module on $\rcrhos$ and, later, the natural Cauchy product
between hyperseries:
\begin{thm}
\label{thm:module}$\rcrhos$ is a quotient $\rcrho$-module.
\end{thm}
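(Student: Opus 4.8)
The plan is to exhibit $\rcrhos$ as the quotient of an $\R_{\rho}$-module by a submodule, and then to check that the $\R_{\rho}$-action descends to the quotient ring $\rcrho=\R_{\rho}/\mathcal{N}_{\rho}$, where $\R_{\rho}$ denotes the ring of $\rho$-moderate nets and $\mathcal{N}_{\rho}$ the ideal of $\rho$-negligible ones. Each $c=[c_{\eps}]\in\rcrho$ acts on a net $(a_{n\eps})_{n,\eps}$ by the constant-in-$n$ multiplication $(c_{\eps}a_{n\eps})_{n,\eps}$, and I set $c\cdot[a_{n\eps}]_{\text{s}}:=[c_{\eps}a_{n\eps}]_{\text{s}}$. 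Since the defining condition of $\sim_{\sigma\rho}$ in Definition~\ref{def:RCringHyperseries} only involves the difference $a_{n\eps}-\bar a_{n\eps}$, the relation is translation invariant: writing $K$ for the set of nets with $(a_{n\eps})_{n,\eps}\sim_{\sigma\rho}0$, one has $(a_{n\eps})_{n,\eps}\sim_{\sigma\rho}(\bar a_{n\eps})_{n,\eps}$ if and only if $(a_{n\eps}-\bar a_{n\eps})_{n,\eps}\in K$, so that $\rcrhos=\left(\R^{\N\times I}\right)_{\sigma\rho}/K$ as quotients of abelian groups (taking $N=0$ in the definition shows $K\subseteq\left(\R^{\N\times I}\right)_{\sigma\rho}$). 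It therefore suffices to prove: (i) $\left(\R^{\N\times I}\right)_{\sigma\rho}$ is an $\R_{\rho}$-module; (ii) $K$ is an $\R_{\rho}$-submodule; (iii) $\mathcal{N}_{\rho}\cdot\left(\R^{\N\times I}\right)_{\sigma\rho}\subseteq K$, which is exactly what lets the action factor through $\rcrho$.

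For (i), closure under addition is immediate since $\sum_{n=0}^{\nint{(N)}_{\eps}}(a_{n\eps}+b_{n\eps})=\sum_{n=0}^{\nint{(N)}_{\eps}}a_{n\eps}+\sum_{n=0}^{\nint{(N)}_{\eps}}b_{n\eps}\in\R_{\rho}$ for every $N\in\hyperN{\sigma}$. For the scalar action, the key point is that a factor independent of $n$ pulls out of the hyperfinite sum, $\sum_{n=0}^{\nint{(N)}_{\eps}}c_{\eps}a_{n\eps}=c_{\eps}\sum_{n=0}^{\nint{(N)}_{\eps}}a_{n\eps}$, and $\R_{\rho}$ is closed under products; hence $(c_{\eps}a_{n\eps})_{n,\eps}$ is again moderate over hypersums. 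The module axioms hold because they hold for each fixed pair $(n,\eps)$ in $\R$ and are preserved by the sums defining moderateness.

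For (ii), $K$ is an additive subgroup by linearity of $\sum_{n=\nint{(N)}_{\eps}}^{\nint{(M)}_{\eps}}(\cdot)$ together with the fact that $\mathcal{N}_{\rho}$ is closed under sums and opposites. Closure of $K$ under multiplication by $c\in\R_{\rho}$ again uses that $c_{\eps}$ pulls out of the partial hypersum, $\sum_{n=\nint{(N)}_{\eps}}^{\nint{(M)}_{\eps}}c_{\eps}a_{n\eps}=c_{\eps}\sum_{n=\nint{(N)}_{\eps}}^{\nint{(M)}_{\eps}}a_{n\eps}$: here the second factor is $\rho$-negligible (since $(a_{n\eps})_{n,\eps}\in K$) while $c_{\eps}$ is $\rho$-moderate, so the product is $\rho$-negligible because $\mathcal{N}_{\rho}$ is an ideal of $\R_{\rho}$.

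The heart of the argument, and the step I expect to be the main obstacle, is (iii). Given $c\in\mathcal{N}_{\rho}$ and $(a_{n\eps})_{n,\eps}\in\left(\R^{\N\times I}\right)_{\sigma\rho}$, I must show $\sum_{n=\nint{(N)}_{\eps}}^{\nint{(M)}_{\eps}}c_{\eps}a_{n\eps}=c_{\eps}\sum_{n=\nint{(N)}_{\eps}}^{\nint{(M)}_{\eps}}a_{n\eps}\sim_{\rho}0$ for all $M,N\in\hyperN{\sigma}$; since $c_{\eps}$ is negligible and $\mathcal{N}_{\rho}$ is an ideal, it is enough to know that the \emph{partial} hyperfinite sum $\sum_{n=\nint{(N)}_{\eps}}^{\nint{(M)}_{\eps}}a_{n\eps}$ is $\rho$-moderate. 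The subtlety is that Definition~\ref{def:RCringHyperseries} only grants moderateness of the sums based at $0$. I would therefore write the partial sum as the difference $\sum_{n=0}^{\nint{(M)}_{\eps}}a_{n\eps}-\sum_{n=0}^{\nint{(N)}_{\eps}-1}a_{n\eps}$ (with the usual empty-sum and sign conventions covering the cases $\nint{(N)}_{\eps}>\nint{(M)}_{\eps}$ or $\nint{(N)}_{\eps}=0$), and observe that each term is a hypersum based at $0$ indexed by a hyperfinite bound: the $\sigma$-moderateness of $(\nint{(N)}_{\eps})_{\eps}$ is inherited by $(\max(\nint{(N)}_{\eps}-1,0))_{\eps}$, so this net represents an element of $\hyperN{\sigma}$ as well. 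Both terms then lie in $\R_{\rho}$ by the moderate-over-hypersums hypothesis, and since $\R_{\rho}$ is closed under subtraction the partial sum is $\rho$-moderate, completing (iii). Combining (i)--(iii) shows that $\left(\R^{\N\times I}\right)_{\sigma\rho}/K=\rcrhos$ carries a well-defined $\rcrho$-module structure.
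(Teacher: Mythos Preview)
Your argument is correct and follows essentially the same approach as the paper's (very brief) proof: closure of $\left(\R^{\N\times I}\right)_{\sigma\rho}$ under sum and scalar multiplication, together with the fact that $\sim_{\sigma\rho}$ is a congruence for these operations. Your treatment is in fact considerably more explicit---in particular, the key observation you single out in (iii), that the partial sum $\sum_{n=\nint{(N)}_{\eps}}^{\nint{(M)}_{\eps}}a_{n\eps}$ is $\rho$-moderate because it is a difference of two sums based at $0$, is exactly the computation the paper carries out as equation~\eqref{eq:partSumsModerate} in the proof of the subsequent Theorem~\ref{thm:seriesWellDef}.
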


\begin{proof}
The closure of the set of $\sigma$, $\rho$-moderate nets over hypersums,
i.e.~$(\R^{\N\times I})_{\sigma\rho}=\left\{ (a_{n\eps})\mid\forall N\in\hyperN{\sigma}:\ \left(\sum_{n=0}^{\nint{(N)}_{\eps}}a_{n\eps}\right)\in\R_{\rho}\right\} $
with respect to pointwise sum $(a_{n\eps}+b_{n\eps})$ and product
$(r_{\eps})\cdot(a_{n,\eps})=(r_{\eps}\cdot a_{n,\eps})$ by $(r_{\eps})\in\R_{\rho}$
follows from similar properties of the ring $\R_{\rho}$. Similarly
to the case of $\rcrho$, we can finally prove that the equivalence
relation \eqref{eq:negligibleHyperseries} is a congruence with respect
to these operations.
\end{proof}
\noindent We now prove that if $(a_{n})_{n}\in\rcrhos$, then hyperfinite
sums are well-defined:
\begin{thm}
\label{thm:seriesWellDef}Let $(a_{n})_{n}=[a_{n\eps}]\in\rcrhos$
and let $\sigma$, $\rho$ be two arbitrary gauges, then the map
\[
(M,N)\in\hyperN{\sigma}^{2}\mapsto\left[\sum_{n=\nint{(N)}_{\eps}}^{\nint{(M)}_{\eps}}a_{n\eps}\right]\in\rcrho
\]
is well-defined.
\end{thm}

\begin{proof}
$\rho$-moderateness directly follows from $(a_{n})_{n}\in\rcrhos$;
in fact:
\begin{equation}
\left|\sum_{n=\nint{(N)}_{\eps}}^{\nint{(M)}_{\eps}}a_{n\eps}\right|=\left|\sum_{n=0}^{\nint{(M)}_{\eps}}a_{n\eps}-\sum_{n=0}^{\nint{(N)}_{\eps}-1}a_{n\eps}\right|\le\rho_{\eps}^{-Q_{1}}+\rho_{\eps}^{-Q_{2}}.\label{eq:partSumsModerate}
\end{equation}
Now, assume that $(a_{n})_{n}=[\bar{a}_{n\eps}]$ is another representative.
For $q\in\N$, condition \eqref{eq:negligibleHyperseries} yields
\begin{align*}
\left|\sum_{n=\nint{(N)}_{\eps}}^{\nint{(M)}_{\eps}}a_{n\eps}-\sum_{n=\nint{(N)}_{\eps}}^{\nint{(M)}_{\eps}}\bar{a}_{n\eps}\right| & \le\rho_{\eps}^{q}
\end{align*}
for $\eps$ small.
\end{proof}
\noindent We can finally define hyperfinite sums and hyperseries.
\begin{defn}
\label{def:hyperseries} Let $(a_{n})_{n}=[a_{n\eps}]\in\rcrhos$
and let $\sigma$ and $\rho$ be two arbitrary gauges then the term
\begin{equation}
\sum_{n=N}^{M}a_{n}:=\left[\sum_{n=\nint{(N)}_{\eps}}^{\nint{(M)}_{\eps}}a_{n\eps}\right]\in\rcrho\quad\forall N,M\in\hyperN{\sigma}\label{eq:hyperfiniteSum}
\end{equation}
is called $\sigma$, $\rho$-\emph{hypersum} of $(a_{n})_{n}$ (\emph{hypersum}
for brevity).

\noindent Moreover, we say that $s$ \emph{is the $\rho$-sum of hyperseries
with terms} $(a_{n})_{n\in\N}$\emph{ as $n\in\hypNs$} if $s$ is
the hyperlimit of the hypersequence $N\in\hyperN{\sigma}\mapsto\sum_{n=0}^{N}a_{n}\in\rcrho$.
In this case, we write

\[
s=\hyperlimarg{\rho}{\sigma}{N}\sum_{n=0}^{N}a_{n}=:\hypersum{\rho}{\sigma}a_{n}.
\]
In case the use of the gauge $\rho$ is clear from the context, we
simply say that $s$ \emph{is the sum of the hyperseries with terms}
$(a_{n})_{n\in\N}$ \emph{as} $n\in\hypNs$.\\
As usual, we also say that the hyperseries $\hypersum{\rho}{\sigma}a_{n}$
is \emph{convergent} if 
\[
\exists\,\hyperlimarg{\rho}{\sigma}{N}\sum_{n=0}^{N}a_{n}\in\rcrho.
\]

\noindent Whereas, we say that a hyperseries $\hypersum{\rho}{\sigma}a_{n}$
\emph{does not converge} if $\hyperlimarg{\rho}{\sigma}{N}\sum_{n=0}^{N}a_{n}$
does not exist in $\rcrho$. More specifically, if $\hyperlimarg{\rho}{\sigma}{N}\sum_{n=0}^{N}a_{n}=+\infty$
($-\infty$), we say that $\hypersum{\rho}{\sigma}a_{n}$ \emph{diverges
to $+\infty$ }(\emph{$-\infty$}).
\end{defn}

\noindent For the sake of brevity, when dealing with hyperseries or
with hypersums, we always \emph{implicitly assume} that $\sigma$,
$\rho$ are two gauges and that $(a_{n})_{n}\in\rcrhos$.
\begin{rem}
\label{rem:hypfiniteSums}~
\begin{enumerate}
\item Note that we are using an abuse of notations, since the term $\sum_{n=N}^{M}a_{n}$
actually depends on the two considered gauges $\rho$, $\sigma$.
\item Explicitly, $s=\hypersum{\rho}{\sigma}a_{n}$ means
\begin{equation}
\forall q\in\N\,\exists M\in\hyperN{\sigma}\,\forall N\in\hyperN{\sigma}_{\ge M}:\ \left|\sum_{n=0}^{N}a_{n}-s\right|<\diff{\rho}^{q}.\label{eq:hyperLimConvHyperSer}
\end{equation}
\item Also note that if $N$, $M\in\N$, then $\sum_{n=N}^{M}a_{n}=a_{N}+\ldots+a_{M}\in\rti$
is the usual finite sum. This is related to our motivating discussion
about the definition of hyperfinite series at the beginning of Sec.~\ref{sec:Hyperseries}.
\end{enumerate}
\end{rem}

\subsection{Relations between $\rcrhos$ and $\rcrhou$}

A first consequence of the condition $\sigma\ge\rho^{*}$ is that
if the net $(a_{n\eps})$ is uniformly moderate, then it is also moderate
over hypersums. Similarly, we can argue for the equality, so that
we have a natural map $\rcrhou\ra\rcrhos$:
\begin{lem}
\label{lem:Rtilu-RtilsComparison}We have the following properties:
\begin{enumerate}
\item \label{enu:extension}If $N=[N_{\eps}]\in\hyperN{\sigma}$, with $N_{\eps}\in\N$,
and $(a_{n})_{n}=[a_{n\eps}]\in\rcrhos$, then $a_{N}:=[a_{N_{\eps},\eps}]\in\rcrho$
is well defined. That is, any sequence $(a_{n})_{n}\in\rcrhos$ can
be extended from $\N$ to the entire set $\hyperN{\sigma}$ of hyperfinite
numbers.
\end{enumerate}
Moreover, if we assume that $\sigma\ge\rho^{*}$, then we have:
\begin{enumerate}[resume]
\item \label{enu:embedding}Both $\rcrho\subseteq\rcrhou$ and $\rcrho\subseteq\rcrhos$
via the embedding $[x_{\eps}]\mapsto[(x_{\eps})_{n,\eps}]$.
\item \label{enu:emb2}The mapping
\[
\lambda:\left[a_{n\eps}\right]_{\text{\emph{u}}}\in\rcrhou\mapsto\left[a_{n\eps}\right]_{\text{\emph{s}}}\in\rcrhos
\]
is a well-defined $\rcrho$-linear map.
\item \label{enu:convRtilu}Let us define a hypersum operator
\[
\sum_{n=N}^{M}:\rcrhou\ra\rcrho\quad\text{as}\quad\sum_{n=N}^{M}\left[a_{n\eps}\right]_{\text{\emph{u}}}:=\left[\sum_{n=\nint{(N)}_{\eps}}^{\nint{(M)}_{\eps}}a_{n\eps}\right]
\]
for all $M$, $N\in\hypNs$. Then $\sum_{n=N}^{M}\left[a_{n\eps}\right]_{\text{\emph{u}}}=\sum_{n=N}^{M}\lambda\left(\left[a_{n\eps}\right]_{\text{\emph{u}}}\right)\in\rcrho$.
Therefore, the character (convergent or divergent) of the corresponding
hyperseries is identical in the two spaces $\rcrhou$ and $\rcrhos$.
\end{enumerate}
\end{lem}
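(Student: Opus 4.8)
The plan is to handle the four assertions in order, reducing each to the moderateness/negligibility bookkeeping already in place; the single recurring estimate is the bound $\nint{(N)}_{\eps}\le\sigma_{\eps}^{-R}$ on the number of summands, valid for any $N\in\hyperN{\sigma}$. For \eqref{enu:extension} I would specialize Theorem~\ref{thm:seriesWellDef} to $M=N$: the one-term hypersum $\bigl[\sum_{n=\nint{(N)}_{\eps}}^{\nint{(N)}_{\eps}}a_{n\eps}\bigr]=[a_{\nint{(N)}_{\eps},\eps}]$ is already shown there to be a well-defined element of $\rcrho$, so both $\rho$-moderateness and independence of the representative $(a_{n\eps})$ of $(a_{n})_{n}$ (the latter from \eqref{eq:negligibleHyperseries} with $M=N$) come for free. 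It then remains only to note independence of the integer-valued representative $(N_{\eps})$ of $N$: if $[N_{\eps}]=[\bar N_{\eps}]$ in $\hyperN{\sigma}$ with $N_{\eps},\bar N_{\eps}\in\N$, then two $\sigma$-infinitely close naturals must coincide for $\eps$ small (their difference is either $0$ or $\ge1>\sigma_{\eps}^{q}$), whence $a_{N_{\eps},\eps}=a_{\bar N_{\eps},\eps}$ for $\eps$ small and the two classes in $\rcrho$ agree.

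For \eqref{enu:embedding} and the moderateness half of \eqref{enu:emb2} I would rerun the computation of \eqref{eq:hyperfiniteSumR_u}. Given $[x_{\eps}]\in\rcrho$, the constant-in-$n$ net $(x_{\eps})_{n,\eps}$ is trivially uniformly moderate, since the bound $|x_{\eps}|\le\rho_{\eps}^{-Q}$ does not involve $n$; this gives $\rcrho\subseteq\rcrhou$, and injectivity is immediate because \eqref{eq:uniformlyNeglibible} for a net independent of $n$ is exactly $\rho$-negligibility. For $\rcrho\subseteq\rcrhos$ and for $\lambda$ I must pass from a uniform bound $|a_{n\eps}|\le\rho_{\eps}^{-Q}$ to moderateness over hypersums: estimating $\bigl|\sum_{n=0}^{\nint{(N)}_{\eps}}a_{n\eps}\bigr|\le(\nint{(N)}_{\eps}+1)\rho_{\eps}^{-Q}\le(\sigma_{\eps}^{-R}+1)\rho_{\eps}^{-Q}$ and invoking $\sigma\ge\rho^{*}$ (equivalently $\R_{\sigma}\subseteq\R_{\rho}$, so $\sigma_{\eps}^{-R}\le\rho_{\eps}^{-RQ_{\sigma,\rho}}$) yields a $\rho$-moderate bound. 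The $\rcrho$-linearity of $\lambda$ then holds automatically, as both the module operations and $\lambda$ are induced by the same pointwise operations on nets.

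The delicate point, and the one I expect to require the most care, is the negligibility half of the well-definedness of $\lambda$: I must show a uniformly negligible difference $d_{n\eps}=a_{n\eps}-\bar a_{n\eps}$ is $\sim_{\sigma\rho}$-negligible. Fixing $q\in\N$, I get $\bigl|\sum_{n=\nint{(N)}_{\eps}}^{\nint{(M)}_{\eps}}d_{n\eps}\bigr|\le(\nint{(M)}_{\eps}+1)\rho_{\eps}^{q}\le(\sigma_{\eps}^{-R}+1)\rho_{\eps}^{q}$, and here $\sigma\ge\rho^{*}$ lets me absorb $\sigma_{\eps}^{-R}\le\rho_{\eps}^{-RQ_{\sigma,\rho}}$, leaving a bound $\le C\rho_{\eps}^{q-RQ_{\sigma,\rho}}$; since $q$ is arbitrary the exponent is unbounded, giving $\sim_{\rho}0$ as required by \eqref{eq:negligibleHyperseries}. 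This is exactly where $\sigma\ge\rho^{*}$ is indispensable: without controlling the number of addends by a $\rho$-moderate factor, multiplying a $\rho$-negligible net by $\sim\sigma_{\eps}^{-R}$ terms could destroy negligibility.

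Finally, \eqref{enu:convRtilu} is essentially tautological once \eqref{enu:emb2} is available: the operator on $\rcrhou$ and the hypersum of $\lambda([a_{n\eps}]_{\text{u}})=[a_{n\eps}]_{\text{s}}$ in $\rcrhos$ are defined by the \emph{same} net-level expression $\bigl[\sum_{n=\nint{(N)}_{\eps}}^{\nint{(M)}_{\eps}}a_{n\eps}\bigr]$, so they coincide in $\rcrho$, with well-definedness inherited through $\lambda$ from Theorem~\ref{thm:seriesWellDef}. As this holds for every $N\in\hyperN{\sigma}$, the two partial-hypersum hypersequences $N\mapsto\sum_{n=0}^{N}[a_{n\eps}]_{\text{u}}$ and $N\mapsto\sum_{n=0}^{N}\lambda([a_{n\eps}]_{\text{u}})$ agree pointwise; hence one admits a hyperlimit in $\rcrho$ iff the other does, and the values coincide, which is precisely the asserted identity of convergence character in $\rcrhou$ and $\rcrhos$.
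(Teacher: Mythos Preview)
Your proposal is correct and follows essentially the same approach as the paper: both reduce \ref{enu:emb2} to the estimate $\bigl|\sum a_{n\eps}\bigr|\le(\text{number of terms})\cdot\rho_{\eps}^{\pm Q}$ and then absorb the $\sigma$-moderate factor via $\sigma\ge\rho^{*}$, while \ref{enu:convRtilu} is tautological in both. The only minor difference is in \ref{enu:extension}: the paper rederives moderateness of $a_{N_{\eps},\eps}$ directly as a telescoping difference $\sum_{0}^{N_{\eps}}-\sum_{0}^{N_{\eps}-1}$, whereas you (legitimately) invoke Theorem~\ref{thm:seriesWellDef} with $M=N$; your additional remark about independence of the integer-valued representative of $N$ is a detail the paper leaves implicit.
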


\begin{proof}
\ref{enu:extension}: In fact, if $(N_{\eps})\in\N_{\sigma}$, then
from $(a_{n})_{n}\in\rcrhos$ we get the existence of $Q_{i}\in\N$
such that
\[
\left|a_{N_{\eps},\eps}\right|=\left|\sum_{n=0}^{N_{\eps}}a_{n\eps}-\sum_{n=0}^{N_{\eps}-1}a_{n\eps}\right|\le\rho_{\eps}^{-Q_{1}}+\rho_{\eps}^{-Q_{2}}.
\]
Finally, if $(a_{n})_{n}=[a_{n\eps}]=[\bar{a}_{n\eps}]\in\rcrhos$,
then directly from \eqref{eq:negligibleHyperseries} with $M=N$ we
get $[(a_{\nint{(N)}_{\eps},\eps})_{n,\eps}]=[(\bar{a}_{\nint{(N)}_{\eps},\eps})_{n,\eps}]$
(note that it is to have this result that we defined \eqref{eq:negligibleHyperseries}
using $\sum_{\nint{(N)}_{\eps}}^{\nint{(M)}_{\eps}}$ instead of $\sum_{n=0}^{\nint{(N)}_{\eps}}$
like in \eqref{eq:moderateHypseries}). Therefore, $a_{N}:=[a_{N_{\eps},\eps}]\in\rcrho$
is well-defined. In particular, this applies with $N=n\in\N$, so
that any equivalence class $(a_{n})_{n}=[(a_{n\eps})_{n,\eps}]\in\rcrhos$
also defines an ordinary sequence $(a_{n})_{n\in\N}=\left(\left[a_{n\eps}\right]\right)_{n\in\N}$
of $\rcrho$. On the other hand, let us explicitly note that if $(a_{n})_{n\in\N}=(\bar{a}_{n})_{n\in\N}$,
i.e.~if $a_{n}=\bar{a}_{n}$ for all $n\in\N$, then not necessarily
\eqref{eq:negligibleHyperseries} holds, i.e.~we can have $(a_{n})_{n}\ne(\bar{a}_{n})_{n}$
as elements of the quotient module $\rcrhos$.

Claim \ref{enu:embedding} is left to the reader.

\ref{enu:emb2}: Assume that inequality in \eqref{eq:uniformlyModerate}
holds for $\eps\le\eps_{0}$, i.e.
\begin{equation}
\forall\eps\le\eps_{0}\,\forall n\in\N:\ \left|a_{n\eps}\right|\le\rho_{\eps}^{-Q}.\label{eq:unifModEps0}
\end{equation}
Without loss of generality, we can also assume that $\nint{(N)}_{\eps}+1\le\sigma_{\eps}^{-R}$
and $\sigma_{\eps}\ge\rho_{\eps}^{-Q_{\sigma,\rho}}$. Then, for each
$\eps\le\eps_{0}$, using \eqref{eq:unifModEps0} we have $\left|\sum_{n=0}^{\nint{(N)}_{\eps}}a_{n\eps}\right|\le\left(\nint{(N)}_{\eps}+1\right)\cdot\rho_{\eps}^{-Q}\le\sigma_{\eps}^{-R}\cdot\rho_{\eps}^{-Q}\le\rho_{\eps}^{-R\cdot Q_{\sigma,\rho}-Q}$.
Moreover, if $\left[a_{n\eps}\right]_{\text{u}}=0$ then, for all
$q\in\N$
\[
\left|\sum_{n=0}^{\nint{(N)}_{\eps}}a_{n\eps}\right|\le\left(\nint{(N)}_{\eps}+1\right)\cdot\rho_{\eps}^{q}
\]
for $\eps$ small. Since $\nint{(N)}_{\eps}$ is $\sigma$-moderate
and $\sigma\geq\rho^{*}$, this proves that the linear map $\lambda$
is well-defined.

\ref{enu:convRtilu}: This follows directly from Def.~\ref{def:hyperseries}.
\end{proof}
\noindent It remains an open problem the study of injectivity of the
map $\lambda$.

We could say that $\sigma\ge\rho^{*}$ and $\left[a_{n\eps}\right]_{\text{u}}\in\rcrhou$
are sufficient conditions to get $(a_{n})_{n}=\left[a_{n\eps}\right]_{\text{s}}\in\rcrhos$
and hence to start talking about hyperfinite sums $\sum_{n=N}^{M}a_{n}$
and hyperseries $\hypersum{\rho}{\sigma}a_{n}$. On the other hand,
if $\sigma\ge\rho^{*}$ is false, we can still consider the space
$\rcrhos$ and hence talk about hyperseries, but the corresponding
space $\rcrhos$ lacks elements such as $(1)_{n}$ because of the
subsequent Lem.~\ref{lem:1}. As we will see in the following examples
\ref{exa:GeomExpSeries}.\ref{enu:geomSigma} and \ref{exa:GeomExpSeries}.\ref{enu:expSigma},
the space $\rcrhos$ still contains sequences corresponding to interesting
converging hyperseries.

The following examples of convergent hyperseries justify our definition
of hyperseries by recovering classical examples such such geometric
and exponential hyperseries. We recall that this is not possible using
classical series in a non-Archimedean setting.
\begin{example}
\label{exa:GeomExpSeries}\ 
\begin{enumerate}[label=\arabic*) ]
\item Let $N=[N_{\eps}]\in\hypNs$, where $N_{\eps}\in\N$ for all $\eps$,
then $\sum_{n=N}^{N}a_{n}=\left[a_{N_{\eps},\eps}\right]=a_{N}\in\rcrho$.
We recall that $a_{N}=\left[a_{N_{\eps},\eps}\right]$ is the extension
of $(a_{n})_{n}\in\rcrhos$ to $N\in\hypNs$ (see \ref{enu:extension}
of Lem.~\ref{lem:Rtilu-RtilsComparison}).
\item \label{enu:geomSeries}For all $k\in\rcrho$, $0<k<1$, we have (note
that $\sigma=\rho$)
\begin{equation}
\hypersum{\rho}{\rho}k^{n}=\frac{1}{1-k}.\label{eq:geomHyperSeries}
\end{equation}
We first note that $k^{n}\le1$, so that $\left\{ k^{n}\right\} _{n}=\left[k_{\eps}^{n}\right]_{\text{u}}\in\rcrhou$.
Now,
\[
\hypersum{\rho}{\rho}k^{n}=\hyperlimarg{\rho}{\rho}{N}\sum_{n=0}^{N}k^{n}=\hyperlimarg{\rho}{\rho}{N}\frac{1-k^{N+1}}{1-k}.
\]
But $k^{N+1}<\diff{\rho}^{q}$ if and only if $(N+1)\log k<q\log\diff{\rho}$.
Since $0<k<1$, we have $\log k<0$ and we obtain $N>q\frac{\log\diff{\rho}}{\log k}-1$.
It suffices to take $M_{\eps}:=\text{int}\left(q\frac{\log\rho_{\eps}}{\log k_{\eps}}\right)$
in the definition of hyperseries.
\item \label{enu:geomSigma}More generally, if $k\in\rcrho$, $0<k<1$,
we can evaluate
\[
\left[\left|\sum_{n=0}^{N_{\eps}}k_{\eps}^{n}\right|\right]=\left[\left|\frac{1-k_{\eps}^{N_{\eps}+1}}{1-k_{\eps}}\right|\right]\le\frac{2}{1-k}\in\rcrho.
\]
This shows that $(k^{n})_{n}=[k_{\eps}^{n}]\in\rcrhos$ for all gauges
$\sigma$. If we assume $\sigma\le\rho^{*}$ then $M_{\eps}:=\text{int}\left(q\frac{\log\rho_{\eps}}{\log k_{\eps}}\right)\in\N_{\sigma}$
and hence, proceeding as above, we can prove that $\hypersum{\rho}{\sigma}k^{n}=\frac{1}{1-k}$.
Note, e.g., that if $\sigma_{\eps}:=\log\left(-\log\rho_{\eps}\right)^{-1}$,
then $\sigma\not\le\rho^{*}$ and $M_{\eps}=\text{int}\left(q\frac{\log\rho_{\eps}}{\log k_{\eps}}\right)\notin\N_{\sigma}$.
\item Let $k\in\rcrho_{>0}$ be such that $k\sbpt{>}1$ (see \cite{MTAG}
for the relations $\sbpt{>}$ and $\sbpt{=}$ and, more generally,
for the language of subpoints), then the hyperseries $\hypersum{\rho}{\rho}k^{n}$
is not convergent. In fact, by contradiction, in the opposite case
we would have $\sum_{n=0}^{N}k^{n}\in(l-1,l+1)$ for some $l\in\rti$
for all $N$ sufficiently large, but this is impossible because for
all fixed $K\in\rcrho_{>0}$ and for $N$ sufficiently large, $\sum_{n=0}^{N}k^{n}=\hyperlimarg{\rho}{\rho}{N}\frac{1-k^{N+1}}{1-k}\sbpt{>}K$
because $\hyperlimarg{\rho}{\rho}{N}k^{N+1}\sbpt{=}+\infty$.
\item \label{enu:exp}For all $x\in\rcrho$ finite, we have $\hypersum{\rho}{\rho}\frac{x^{n}}{n!}=e^{x}$.
We have $|x|<M\in\R_{>0}$ because $x$ is finite, and hence $\left|x_{\eps}\right|\le M$
for all $\eps\le\eps_{0}$. Thereby $\frac{x_{\eps}^{n}}{n!}\le\frac{\left|x_{\eps}\right|^{n}}{n!}\le e^{M}$
for all $n\in\N$ and thus $\left\{ \frac{x^{n}}{n!}\right\} _{n}=\left[\frac{x_{\eps}^{n}}{n!}\right]_{\text{u}}\in\rcrhou$.
For all $N=[N_{\eps}]\in\hyperN{\sigma}$, $N_{\eps}\in\N$, and all
$\eps$, we have 
\begin{equation}
\sum_{n=0}^{N_{\eps}}\frac{x_{\eps}^{n}}{n!}=e^{x_{\eps}}-\sum_{n=N_{\eps}+1}^{+\infty}\frac{x_{\eps}^{n}}{n!}.\label{eq:expSeries}
\end{equation}
Now, take $N\in\hyperN{\sigma}$ such that $\frac{M}{N+1}<\frac{1}{2}$,
so that we can assume $N_{\eps}+1>2M$ for all $\eps$. We have $\left|\sum_{n=N_{\eps}+1}^{+\infty}\frac{x_{\eps}^{n}}{n!}\right|\le\sum_{n>N_{\eps}}\frac{M^{n}}{n!}$,
and for all $n\ge N_{\eps}$ we have (by induction)
\[
\frac{M^{n+1}}{(n+1)!}<\frac{1}{2^{n+1}}.
\]
Therefore $\left|\sum_{n=N_{\eps}+1}^{+\infty}\frac{x_{\eps}^{n}}{n!}\right|\le\sum_{n>N_{\eps}}\frac{1}{2^{n}}$
and hence $\hyperlimarg{\rho}{\rho}{N}\sum_{n=N+1}^{+\infty}\frac{x^{n}}{n!}=0$
by \eqref{eq:geomHyperSeries}. This and \eqref{eq:expSeries} yields
the conclusion.
\item \label{enu:expSigma}In the same assumptions of the previous example,
we have $\left[\left|\sum_{n=0}^{N_{\eps}}\frac{x_{\eps}^{n}}{n!}\right|\right]\le e^{M}$
and hence $\left(\frac{x^{n}}{n!}\right)_{n}=\left[\frac{x_{\eps}^{n}}{n!}\right]\in\rcrhos$
for all gauges $\sigma$. If $\sigma\le\rho^{*}$, proceeding as above,
we can prove that $\hypersum{\rho}{\sigma}\frac{x^{n}}{n!}=e^{x}$.
\end{enumerate}
\end{example}

Finally, the following lemma shows that a sharply bounded sequence
of $\rcrho$ always defines a sequence for hyperseries, i.e.~an element
of $\rcrhos$.
\begin{lem}
\label{Lem:fromSeqToRcrhos}Let $(a_{n})_{n\in\N}$ be a sequence
of $\rcrho$. If $(a_{n})_{n\in\N}$ is sharply bounded:
\begin{equation}
\exists M\in\rcrho_{>0}\,\forall n\in\N:\ |a_{n}|\le M,\label{eq:a_nBounded}
\end{equation}
then there exists a sequence $(a_{n\eps})_{n\in\N}$ of $\R_{\rho}$
such that
\begin{enumerate}
\item $a_{n}=[a_{n\eps}]\in\rcrho$ for all $n\in\N$;
\item $[a_{n\eps}]_{\text{\emph{u}}}\in\rcrhou$;
\item If $\sigma\ge\rho^{*}$, then $[a_{n\eps}]_{\text{\emph{s}}}\in\rcrhos$.
\end{enumerate}
\end{lem}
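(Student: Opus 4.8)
The plan is to produce uniformly moderate representatives by clamping arbitrary ones against a single fixed representative of the bound $M$. First I would fix any representative $(M_\eps)$ of $M\in\rcrho_{>0}$; since $M>0$ we may assume $M_\eps\ge 0$ for every $\eps$ (replacing $M_\eps$ by $|M_\eps|$ if necessary, which does not change the class $M$), and by $\rho$-moderateness of $M$ there exist $Q\in\N$ and $\eps_0$ with $M_\eps\le\rho_\eps^{-Q}$ for all $\eps\le\eps_0$. For each $n\in\N$ choose an arbitrary representative $(\tilde a_{n\eps})_\eps$ of $a_n$ and define the truncated net
\[
a_{n\eps}:=\min\bigl(M_\eps,\max(-M_\eps,\tilde a_{n\eps})\bigr),
\]
so that by construction $|a_{n\eps}|\le M_\eps$ for every $n\in\N$ and every $\eps$.

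To prove (1) I would fix $n$ and use the characterization of the order on $\rcrho$ (cf.\ \cite{MTAG}): the hypothesis $|a_n|\le M$ means that for every $q\in\N$ one has $|\tilde a_{n\eps}|\le M_\eps+\rho_\eps^q$ for all sufficiently small $\eps$ (with threshold depending on $n$ and $q$). A direct case check on the clamp gives the identity $|a_{n\eps}-\tilde a_{n\eps}|=\max\bigl(|\tilde a_{n\eps}|-M_\eps,\,0\bigr)$, which is therefore $\le\rho_\eps^q$ for $\eps$ small; hence $(a_{n\eps}-\tilde a_{n\eps})_\eps$ is $\rho$-negligible and $[a_{n\eps}]=a_n$, establishing (1). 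Claim (2) is then immediate: combining the uniform pointwise bound $|a_{n\eps}|\le M_\eps$ (valid for all $n$ and all $\eps$) with $M_\eps\le\rho_\eps^{-Q}$ (valid for all $\eps\le\eps_0$, with $\eps_0$ and $Q$ independent of $n$) yields $|a_{n\eps}|\le\rho_\eps^{-Q}$ for all $n\in\N$ and all $\eps\le\eps_0$, which is exactly the uniform moderateness condition \eqref{eq:uniformlyModerate}; thus $[a_{n\eps}]_{\text{u}}\in\rcrhou$.

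Finally, (3) follows from (2) together with Lemma \ref{lem:Rtilu-RtilsComparison}: under the assumption $\sigma\ge\rho^{*}$ the map $\lambda$ of part \ref{enu:emb2} is well-defined, i.e.\ every uniformly moderate net is automatically moderate over hypersums, so the same net gives $[a_{n\eps}]_{\text{s}}\in\rcrhos$.

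The one genuine obstacle here is the passage from pointwise to uniform control: sharp boundedness only supplies, for each $n$ separately, an asymptotic bound whose $\eps$-threshold depends on $n$, so no single moderateness exponent can be read off directly from the originally given representatives. The truncation device is precisely what overcomes this, trading those nonuniform asymptotic bounds for the honestly uniform pointwise bound $|a_{n\eps}|\le M_\eps$; the price is that one must verify, per fixed $n$, that the clamping correction is $\rho$-negligible, and it is exactly at this point that the ``for all representatives'' form of the relation $\le$ on $\rcrho$ (needed to freeze a single $(M_\eps)$ valid for every $n$) enters the argument.
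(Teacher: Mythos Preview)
Your proof is correct and follows essentially the same strategy as the paper: modify each representative of $a_n$ so that the bound $|a_{n\eps}|\le M_\eps$ holds uniformly in $n$, then read off uniform moderateness and invoke Lemma~\ref{lem:Rtilu-RtilsComparison} for part~(3). The only difference is cosmetic: the paper first selects representatives $\bar a_{n\eps}$ with $|\bar a_{n\eps}|\le M_\eps$ for $\eps\le\eps_{0n}$ and then overwrites them by $M_\eps$ on the tail $\eps>\eps_{0n}$, whereas you clamp an arbitrary representative into $[-M_\eps,M_\eps]$ for every $\eps$ and verify directly that the clamping correction is $\rho$-negligible; both devices achieve the same uniform bound and preserve the class in $\rcrho$.
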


\begin{proof}
Let $M=[M_{\eps}]$ be any representative of the bound satisfying
\eqref{eq:a_nBounded}, so that $M_{\eps}\le\rho_{\eps}^{-Q}$ for
$\eps\le\eps_{0}$ and for some $Q\in\N$. From \eqref{eq:a_nBounded},
for each $n\in\N$ we get the existence of a representative $a_{n}=[\bar{a}_{n\eps}]$
such that $|\bar{a}_{n\eps}|\le M_{\eps}$ for $\eps\le\eps_{0n}\le\eps_{0}$.
It suffices to define $a_{n\eps}:=\bar{a}_{n\eps}$ if $\eps\le\eps_{0n}$
and $a_{n\eps}:=M_{\eps}$ otherwise to have $\forall\eps\le\eps_{0}\,\forall n\in\N:\ \left|a_{n\eps}\right|\le\rho_{\eps}^{-Q}$,
so that $[a_{n\eps}]_{\text{u}}\in\rcrhou$. If $\sigma\ge\rho^{*}$,
we can then apply Lem.~\ref{lem:Rtilu-RtilsComparison}.
\end{proof}
However, let us note that, generally speaking, changing representatives
of $a_{n}$ as in the previous proof, we also get a different value
of the corresponding hyperseries, as proved by example \eqref{eq:reprCountEx2}.

\subsection{Divergent hyperseries}

By analyzing when the constant net $(1)$ is moderate over hypersums,
we discover the relation \eqref{eq:sigma-rho-min} between the gauges
$\sigma$ and $\rho$:
\begin{lem}
\label{lem:1}The constant net $(1)\in(\R^{\N\times I})_{\sigma\rho}$,
i.e.~it is $\sigma$, $\rho$-moderate over hypersums, if and only
if $\sigma\ge\rho^{*}$.
\end{lem}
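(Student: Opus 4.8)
The plan is to unwind the definition of moderateness over hypersums for the specific net $a_{n\eps}=1$ and then read off the gauge relation directly. Since $\sum_{n=0}^{\nint{(N)}_\eps}1=\nint{(N)}_\eps+1$, the condition $(1)\in(\R^{\N\times I})_{\sigma\rho}$ is, by Def.~\ref{def:RCringHyperseries}, exactly
\[
\forall N\in\hyperN{\sigma}:\ \left(\nint{(N)}_\eps+1\right)_\eps\in\R_\rho,
\]
so the whole statement reduces to comparing the admissible size of hypernatural numbers indexed by $\sigma$ against $\rho$-moderateness.

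For the implication $\sigma\ge\rho^{*}\Rightarrow(1)$ moderate over hypersums, I would take an arbitrary $N=[N_\eps]\in\hyperN{\sigma}$, use its $\sigma$-moderateness to fix $R\in\N$ with $N_\eps\le\sigma_\eps^{-R}$ for $\eps$ small, and then insert the hypothesis $\sigma_\eps\ge\rho_\eps^{Q_{\sigma,\rho}}$ to get $\nint{(N)}_\eps+1\le\sigma_\eps^{-R}+1\le\rho_\eps^{-RQ_{\sigma,\rho}}+1$, which is $\rho$-moderate. This is the easy direction and is essentially the computation already carried out in \eqref{eq:hyperfiniteSumR_u}.

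The converse is the substantive direction, and the key is to exhibit a single witness $N\in\hyperN{\sigma}$ that is ``as large as $\sigma$ permits'' and then extract the gauge inequality from the moderateness of its hypersum. I would set $N_\eps:=\lfloor\sigma_\eps^{-1}\rfloor\in\N$; since $N_\eps\le\sigma_\eps^{-1}$, the class $N=[N_\eps]$ lies in $\hyperN{\sigma}$. Applying the hypothesis to this $N$ yields some $Q\in\N$ with $\nint{(N)}_\eps+1\le\rho_\eps^{-Q}$ for $\eps$ small. On the other hand $\nint{(N)}_\eps+1=\lfloor\sigma_\eps^{-1}\rfloor+1>\sigma_\eps^{-1}$, so $\sigma_\eps^{-1}<\rho_\eps^{-Q}$, whence $\sigma_\eps>\rho_\eps^{Q}$ for $\eps$ small; this is precisely $\sigma\ge\rho^{*}$ (and automatically $Q>0$, because $\sigma_\eps\to0$). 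I expect the only genuinely delicate point to be the strict lower bound $\lfloor\sigma_\eps^{-1}\rfloor+1>\sigma_\eps^{-1}$, which is what converts the integer-part truncation into a usable comparison; beyond that, the main care is in tracking the ``$\forall^{0}\eps$'' quantifiers and in justifying that the value of the hypersum is computed through the chosen representative $N_\eps$, i.e.~that $\nint{(N)}_\eps$ may be taken equal to $N_\eps$.
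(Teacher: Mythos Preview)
Your proposal is correct and follows essentially the same approach as the paper's proof: both directions proceed by unwinding the hypersum of the constant net $1$ as $N_\eps+1$, and for the substantive implication both choose a witness $N_\eps$ comparable to $\sigma_\eps^{-1}$ (you take $\lfloor\sigma_\eps^{-1}\rfloor$, the paper takes $\text{int}(\sigma_\eps^{-1})+1$) to extract the inequality $\sigma_\eps^{-1}\le\rho_\eps^{-Q}$. Your version is in fact slightly more careful about the off-by-one in $\sum_{n=0}^{N_\eps}1=N_\eps+1$ and about identifying $\nint{(N)}_\eps$ with $N_\eps$ when $N_\eps\in\N$.
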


\begin{proof}
If $(1)\in(\R^{\N\times I})_{\sigma\rho}$, we set $N_{\eps}:=\text{int}(\sigma_{\eps}^{-1})+1$,
so that we get $\sigma_{\eps}^{-1}\le\sum_{n=0}^{N_{\eps}}1=N_{\eps}\le\rho_{\eps}^{-Q_{\sigma,\rho}}$
for some $Q_{\sigma,\rho}\in\N$, i.e.~$\sigma\ge\rho^{*}$. Vice
versa, if $\sigma_{\eps}\ge\rho_{\eps}^{Q_{\sigma,\rho}}$ for all
$\eps\le\eps_{0}$, then for those $\eps$, we have $\left|\sum_{n=0}^{\nint{(N)}_{\eps}}1\right|=\nint{(N)}_{\eps}\le\sigma_{\eps}^{-R}\le\rho_{\eps}^{-R\cdot Q_{\sigma,\rho}}$
for some $R\in\N$, because $N\in\hypNs$.
\end{proof}
One could argue that we are mainly interested in converging hyperseries
and hence it is not worth considering the constant net $(1)$. On
the other hand, we would like to argue in the following way: the hypersums
$N\in\hypNs\mapsto\sum_{n=0}^{N}1\in\rcrho$ can be considered, but
they do not converge because $1\not\to0$. As we will see in Lem.~\ref{lem:NtoN+M},
this argumentation is possible only if $\sigma\ge\rho^{*}$ because
of the previous Lem\@.~\ref{lem:1}.

Let $\sigma\ge\rho^{*}$ and $\omega\in\hypNr$ be an infinite number.
If $(a_{n})_{n}\in\rcrhos$, we can also think at $\sum_{n=0}^{\omega}a_{n}$
as another way to compute an infinite summation of the numbers $a_{n}$.
In other words, the following examples can be considered as related
to calculation of divergent series. They strongly motivate and clarify
our definition of hyperfinite sum.
\begin{example}
\label{exa:div}~Assuming $\sigma\ge\rho^{*}$ and working with the
module $\rcrhos$, we have:
\begin{enumerate}[label=\arabic*) ]
\item [5)]\setcounter{enumi}{5}$\sum_{n=1}^{\omega}1=\omega\in\rcrho\setminus\R$.
\item \label{enu:div1}$\sum_{n=1}^{\omega}n=\left[\sum_{n=1}^{\nint{(\omega)}_{\eps}}n_{\eps}\right]=\left[\frac{\nint{(\omega)}_{\eps}(\nint{(\omega)}_{\eps}+1)}{2}\right]=\frac{\omega(\omega+1)}{2}$.
\item $\sum_{n=1}^{\omega}(2n-1)=2\sum_{n=1}^{\omega}n-\sum_{n=1}^{\omega}1=\omega^{2}$
because we know from Thm. \ref{thm:module} that $\rcrhos$ is an
$\rcrho$-module.
\item $\sum_{n=1}^{\omega}\left(a+(n-1)d\right)=\omega a+\frac{\omega^{2}d}{2}-\frac{\omega d}{2}$.
\item Using $\eps$-wise calculations, we also have $\sum_{n=0}^{\omega}(-1)^{n}=\frac{1}{2}(-1)^{\omega+1}+\frac{1}{2}$.
Note that the final result is a finite generalized number of $\rcrho$,
but it does not converge for $\omega\to+\infty$, $\omega\in\hypNr$.
\item \label{enu:div6}The net $(2^{n})_{n,\eps}$ is not $\rho$-moderate
over hypersums, in fact if $\omega=\left[\text{int}(\rho_{\eps}^{-1})\right]$,
then $\sum_{n=0}^{\omega_{\eps}}2^{n-1}=2^{\omega_{\eps}}-1$, which
is not $\rho$-moderate. Another possibility to consider the function
$2^{\omega}$ is to take another gauge $\mu\le\rho$ and the subring
of $\RC{\mu}$ defined by
\[
\RCud{\mu}{\rho}:=\{x\in\RC{\mu}\mid\exists N\in\N:\ |x|\le\diff{\rho}^{-N}\},
\]
where only here we have set $\diff{\rho}:=[\rho_{\eps}]_{\sim_{\mu}}\in\RC{\mu}$.
If we have
\begin{equation}
\forall N\in\N\,\exists M\in\N:\ \diff{\rho}^{-N}\le-M\log\diff{\mu},\label{eq:expTwoGauges}
\end{equation}
then $2^{(-)}:[x_{\eps}]\in\RCud{\mu}{\rho}\mapsto\left[2^{x_{\eps}}\right]\in\RC{\mu}$
is well defined. For example, if $\mu_{\eps}:=\exp\left(-\rho_{\eps}^{1/\eps}\right)$,
then $\mu\le\rho$ and \eqref{eq:expTwoGauges} holds for $M=1$.
Note that the natural ring morphism $[x_{\eps}]_{\sim_{\sigma}}\in\RCud{\mu}{\rho}\mapsto[x_{\eps}]_{\sim_{\rho}}\in\rcrho$
is surjective but generally not injective. Now, if $\omega\in\RCud{\mu}{\rho}\cap\hypNs$,
then $\sum_{n=0}^{\omega}2^{n-1}=2^{\omega}-1\in\RC{\mu}\setminus\RCud{\mu}{\rho}$.
\item \label{enu:Binomial formula}Once again, proceeding by $\eps$-wise
calculations, we also have the binomial formula: For all $a$, $b\in\rcrho$
and $n\in\hyperN{\rho}$, if $\left(a+b\right)^{n}\in\rcrho$, then
\[
\left(a+b\right)^{n}=\sum_{k=0}^{n}\left(\begin{array}{c}
n\\
k
\end{array}\right)a^{k}b^{n-k}
\]
where $n!:=\left[\text{\emph{ni}}(n)_{\eps}!\right]$ and $\left(\begin{array}{c}
n\\
k
\end{array}\right):=\frac{n!}{k!\left(n-k\right)!}$.
\end{enumerate}
\end{example}

\subsection{$\eps$-wise convergence and hyperseries}

\noindent The following result allows us to obtain hyperseries by
considering $\eps$-wise convergence of its summands. Its proof is
clearly very similar to that of \cite[Thm.~28]{MTAG}, but with a
special attention to the condition $(a_{n})_{n}\in\rcrhos$ that we
need beforehand to talk about hyperseries.
\begin{thm}
\label{thm:epsConv}Let $a:\N\ra\rcrho$ and $a_{n}=[a_{n\eps}]$
for all $n\in\N$. Let $q_{\eps}$, $M_{\eps}\in\N_{>0}$ be such
that $(q_{\eps})\to+\infty$ as $\eps\to0^{+}$, and
\begin{equation}
\forall\eps\,\forall N\ge M_{\eps}:\ s_{\eps}-\rho_{\eps}^{q_{\eps}}<\sum_{n=0}^{N}a_{n\eps}<s_{\eps}+\rho_{\eps}^{q_{\eps}}\label{eq:conv}
\end{equation}
(note that this implies that the standard series $\sum_{n=0}^{+\infty}a_{n\eps}$
converges to $s_{\eps}$). Finally, let $\mu$ be another gauge. If
$s=[s_{\eps}]\in\rti$, then setting $\sigma_{\eps}:=\min(\mu_{\eps},M_{\eps}^{-1})$,
we have:
\begin{enumerate}
\item $\sigma\le\mu$ is a gauge (not necessarily a monotonic one);
\item $M:=[M_{\eps}]\in\hypNs$;
\item $s_{M}:=\left[\sum_{n=0}^{M_{\eps}-1}a_{n\eps}\right]\in\rti$;
\item $(a_{n+M})_{n}:=\left[\left(a_{n+M_{\eps},\eps}\right)_{n,\eps}\right]_{\text{\emph{s}}}\in\rcrhos$;
\item $s=s_{M}+\hypersum{\rho}{\sigma}a_{n+M}$.
\end{enumerate}
\end{thm}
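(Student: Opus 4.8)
The plan is to dispose of claims (1)--(2) directly, to reduce the two membership claims (3) and (4) to a single moderateness statement by means of the telescoping identity
\[
\sum_{n=0}^{N_\eps} a_{n+M_\eps,\eps}=\sum_{n=0}^{M_\eps+N_\eps} a_{n\eps}-\sum_{n=0}^{M_\eps-1} a_{n\eps},
\]
and then to read off (5) from \cite[Thm.~28]{MTAG} applied to the shifted partial sums, all quantitative control coming from \eqref{eq:conv}. For (1), since $\mu$ is a gauge we have $0<\sigma_\eps=\min(\mu_\eps,M_\eps^{-1})\le\mu_\eps\to0$, so $\sigma\le\mu$ is a gauge; monotonicity may fail only because $M_\eps^{-1}$ need not be monotone. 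For (2), $\sigma_\eps\le M_\eps^{-1}$ gives $M_\eps\le\sigma_\eps^{-1}$, so $M_\eps$ is $\sigma$-moderate and $M=[M_\eps]\in\hyperN{\sigma}$.

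For the heart of the statement I would first record, straight from \eqref{eq:conv}, that every partial sum with index at least $M_\eps$ is $\rho$-moderate: for $N\ge M_\eps$ one has $\left|\sum_{n=0}^{N}a_{n\eps}\right|\le|s_\eps|+\rho_\eps^{q_\eps}\le|s_\eps|+1$, which is moderate because $s\in\rti$; telescoping, any block with $M_\eps\le K\le L$ obeys $\left|\sum_{n=K}^{L}a_{n\eps}\right|\le2\rho_\eps^{q_\eps}$. Applying the displayed identity with lower index $M_\eps$, the hypersum of the shifted net is
\[
\sum_{n=0}^{\nint{(K)}_\eps} a_{n+M_\eps,\eps}=\sum_{n=0}^{M_\eps+\nint{(K)}_\eps} a_{n\eps}-s_{M,\eps},
\]
whose first summand is moderate (its index is $\ge M_\eps$). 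Hence $(a_{n+M})_n\in\rcrhos$ of claim (4) is equivalent to $s_M=\left[\sum_{n=0}^{M_\eps-1}a_{n\eps}\right]\in\rti$ of claim (3): both reduce to the single moderateness of the boundary term $a_{M_\eps,\eps}=\sum_{n=0}^{M_\eps}a_{n\eps}-s_{M,\eps}$. Since we work with one fixed net of representatives, nothing need be verified about the congruence $\sim_{\sigma\rho}$.

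The step I expect to be the main obstacle is exactly this boundary (below-threshold) moderateness when $M_\eps$ is unbounded: I cannot invoke the extension Lemma~\ref{lem:Rtilu-RtilsComparison}\ref{enu:extension}, which already presupposes membership in $\rcrhos$, and no uniform bound on the family $\{a_{n\eps}\}_n$ is assumed. My approach is to extract it from the interplay of $a_n\in\rcrho$ with \eqref{eq:conv}: the sum $\sum_{n=0}^{M_\eps}a_{n\eps}$ is moderate by the previous paragraph, while convergence of the standard series pins the tail to the boundary term, $\sum_{n\ge M_\eps}a_{n\eps}=a_{M_\eps,\eps}+O(\rho_\eps^{q_\eps})$, so that $s_{M,\eps}=s_\eps-a_{M_\eps,\eps}+O(\rho_\eps^{q_\eps})$; it then remains to bound the single summand $a_{M_\eps,\eps}$ $\rho$-moderately, using that each $a_n$ is a moderate number together with the threshold position of $M_\eps$. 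Once $s_M\in\rti$ is secured, claims (3) and (4) follow.

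Claim (5) is then short. For $N=[N_\eps]\in\hyperN{\sigma}$ the same identity and $M_\eps+N_\eps\ge M_\eps$ give, at the level of representatives,
\[
\left|\sum_{n=0}^{N_\eps} a_{n+M_\eps,\eps}-(s_\eps-s_{M,\eps})\right|=\left|\sum_{n=0}^{M_\eps+N_\eps} a_{n\eps}-s_\eps\right|<\rho_\eps^{q_\eps},
\]
so that $s-s_M=[s_\eps-s_{M,\eps}]$ is approximated by the shifted partial sums uniformly in $N$. Because $q_\eps\to+\infty$, for every $q\in\N$ we have $\rho_\eps^{q_\eps}<\rho_\eps^{q}$ for $\eps$ small, whence $\left|\sum_{n=0}^{N}a_{n+M}-(s-s_M)\right|<\diff{\rho}^{q}$ for all $N\in\hyperN{\sigma}$, in particular beyond any threshold. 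This is precisely condition \eqref{eq:hyperLimConvHyperSer} for $\hypersum{\rho}{\sigma}a_{n+M}=s-s_M$ (the hyperlimit existing by \cite[Thm.~28]{MTAG}), i.e.\ $s=s_M+\hypersum{\rho}{\sigma}a_{n+M}$. All the genuine work sits in the boundary moderateness of $s_M$; (1), (2) and (5) are immediate and (4) collapses to (3) via the telescoping identity.
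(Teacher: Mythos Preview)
Your plan mirrors the paper's proof: dispose of (1)--(2) directly, obtain (3)--(4) by telescoping against the bound from \eqref{eq:conv}, and read (5) off the uniform-in-$N$ tail estimate $\rho_\eps^{q_\eps}$. You also correctly isolate the real crux: everything reduces to the $\rho$-moderateness of the single boundary summand $a_{M_\eps,\eps}$ (equivalently, of $s_M=\bigl[\sum_{n=0}^{M_\eps-1}a_{n\eps}\bigr]$), since \eqref{eq:conv} only controls partial sums with upper index $\ge M_\eps$.

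The gap is that you do not actually carry out this step --- you write ``it then remains to bound the single summand $a_{M_\eps,\eps}$ $\rho$-moderately, using that each $a_n$ is a moderate number together with the threshold position of $M_\eps$'' --- and in fact it cannot be carried out from the stated hypotheses. Moderateness of each individual $a_n=[a_{n\eps}]$ only gives $|a_{n\eps}|\le\rho_\eps^{-Q_n}$ for $\eps\le\eps_{0,n}$, and when $M_\eps$ is unbounded this says nothing about the diagonal $a_{M_\eps,\eps}$. Concretely: take $M_\eps=q_\eps=\lceil 1/\eps\rceil$, $s_\eps=0$, and set $a_{M_\eps-1,\eps}=C_\eps$, $a_{M_\eps,\eps}=-C_\eps$, $a_{n\eps}=0$ otherwise, with $(C_\eps)$ not $\rho$-moderate. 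Then every $a_n=0\in\rcrho$ (since $a_{n\eps}=0$ for $\eps$ small), \eqref{eq:conv} holds trivially, yet $s_M=[C_\eps]\notin\rti$. The paper's own proof sidesteps this off-by-one: it in fact writes $s_M=\bigl[\sum_{n=0}^{M_\eps}a_{n\eps}\bigr]$ (upper limit $M_\eps$, not $M_\eps-1$), which \emph{is} directly controlled by \eqref{eq:conv} at $N=M_\eps$, and proceeds from there. With that index shift --- equivalently, replacing $(a_{n+M})_n$ by $(a_{n+M+1})_n$ throughout --- your argument goes through verbatim; as literally stated, the boundary bound you promise is unavailable.
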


\begin{proof}
The net $\sigma_{\eps}=\min(\mu_{\eps},M_{\eps}^{-1})\to0^{+}$ as
$\eps\to0^{+}$ because $\mu_{\eps}\to0^{+}$, i.e.~it is a gauge
(note that not necessarily $\sigma$ is non-decreasing, e.g.~if $\lim_{\eps\to\frac{1}{k}}M_{\eps}=+\infty$
for all $k\in\N_{>0}$ and $M_{\eps}\ge\mu_{\eps}^{-1}$). We have
$M:=[M_{\eps}]\in\hypNs$ because our definition of $\sigma$ yields
$M_{\eps}\le\sigma_{\eps}^{-1}$. Moreover, since $(q_{\eps})\to+\infty$,
for all $\eps$ condition \eqref{eq:conv} yields $s_{\eps}-1\le s_{\eps}-\rho_{\eps}^{q_{\eps}}<\sum_{n=0}^{N}a_{n\eps}<s_{\eps}+\rho_{\eps}^{q_{\eps}}\le s_{\eps}+1$
for all $N\ge M_{\eps}$. For $N=M$ this gives $s_{M}=\left[\sum_{n=0}^{M_{\eps}}a_{n\eps}\right]\in\rti$,
because we assumed that $s=[s_{\eps}]\in\rcrho$. If $N\in\hypNs$,
then $\nint{(N)}_{\eps}+M_{\eps}\ge M_{\eps}$ for all $\eps$, and
hence $s_{\eps}-1<\sum_{n=0}^{M_{\eps}-1}a_{n\eps}+\sum_{n=M_{\eps}}^{\nint{(N)}_{\eps}+M_{\eps}}a_{n\eps}=s_{M\eps}+\sum_{n=0}^{\nint{(N)}_{\eps}}a_{n+M_{\eps},\eps}<s_{\eps}+1$.
This shows that $\left[\sum_{n=0}^{\nint{(N)}_{\eps}}a_{n+M_{\eps},\eps}\right]\in\rcrho$,
because $s_{M}$, $s\in\rcrho$, i.e.~$(a_{n+M})_{n}\in\rcrhos$.
Similarly, if $q\in\N$, then $q<q_{\eps}$ for $\eps$ sufficiently
small and, proceeding as above, we can prove that $|\sum_{n=0}^{N}a_{n+M}-s-s_{M}|<\diff{\rho}^{q}$.
\end{proof}
\noindent Note that, if $(M_{\eps})$ is not $\rho$-moderate and
we set $\sigma_{\eps}:=\min\left(\rho_{\eps},M_{\eps}^{-1}\right)\in(0,1]$,
then
\[
\forall Q\in\N\,\exists L\subzero I\,\forall\eps\in L:\ \sigma_{\eps}^{-1}>\bar{M}_{\eps}>\rho_{\eps}^{-Q}
\]
and this shows that $\sigma\ge\rho^{*}$, i.e.~the fundamental condition
to have moderateness of hyperfinite sums in the ring $\rcrhou$ (see
\eqref{eq:hyperfiniteSumR_u}), \emph{always necessarily does not
hold}. On the other hand, if $(M_{\eps})$ is $\rho$-moderate, then
$\R_{\sigma}=\R_{\mu}$; in particular, we can simply take $\sigma=\mu=\rho$.

Assuming absolute convergence, we can obtain a stronger and clearer
result:
\begin{thm}
\label{thm:epsConvAbs}Let $a:\N\ra\rcrho$ and $a_{n}=[a_{n\eps}]$
for all $n\in\N$. Assume that the standard series $\sum_{n=0}^{+\infty}a_{n\eps}$
converges absolutely to $\bar{s}_{\eps}$ and $[\bar{s}_{\eps}]\in\rti$.
Finally, let $\mu$ be another gauge and set $s_{\eps}:=\sum_{n=0}^{+\infty}a_{n\eps}$,
then there exists a gauge $\sigma\le\mu$ such that:
\begin{enumerate}[resume]
\item $(a_{n})_{n}=\left[a_{n\eps}\right]_{\text{\emph{s}}}\in\rcrhos$;
\item $s=\hypersum{\rho}{\sigma}a_{n}$.
\end{enumerate}
\end{thm}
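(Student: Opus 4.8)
The plan is to deduce the result from Theorem~\ref{thm:epsConv}, extracting from absolute convergence a pair of nets $q_\eps$, $M_\eps$ that verify \eqref{eq:conv}, and then to sharpen the two conclusions by exploiting the uniform bound on partial sums that absolute convergence supplies (and that mere convergence does not). Write $\bar{s}:=[\bar{s}_\eps]\in\rcrho$ and, for $N\in\N$, let $r_{N,\eps}:=\sum_{n>N}|a_{n\eps}|=\bar{s}_\eps-\sum_{n=0}^{N}|a_{n\eps}|$ be the absolute tails, which are nonincreasing in $N$ and, for each fixed $\eps$, tend to $0$ as $N\to+\infty$. Since $|s_\eps|\le\bar{s}_\eps$, also $s:=[s_\eps]\in\rcrho$, so that Theorem~\ref{thm:epsConv} will be applicable once \eqref{eq:conv} is in place.

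First I would fix any $q_\eps\in\N_{>0}$ with $q_\eps\to+\infty$ as $\eps\to0^{+}$ (for instance $q_\eps:=\text{int}(\eps^{-1})$) and set
\[
M_\eps:=\min\left\{ N\in\N_{>0}\mid r_{N,\eps}<\rho_\eps^{q_\eps}\right\} .
\]
This minimum exists because $\rho_\eps^{q_\eps}>0$ while $r_{N,\eps}\to0$ for fixed $\eps$; and as $r_{N,\eps}$ is nonincreasing in $N$, for every $N\ge M_\eps$ we get
\[
\left|s_\eps-\sum_{n=0}^{N}a_{n\eps}\right|\le r_{N,\eps}\le r_{M_\eps,\eps}<\rho_\eps^{q_\eps},
\]
which is exactly \eqref{eq:conv}. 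Applying Theorem~\ref{thm:epsConv} with the gauge $\sigma_\eps:=\min(\mu_\eps,M_\eps^{-1})\le\mu_\eps$ then yields that $\sigma\le\mu$ is a gauge and $M:=[M_\eps]\in\hypNs$.

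It remains to upgrade the two conclusions. For $(a_n)_n\in\rcrhos$ I would use absolute convergence to bound hypersums uniformly: for every $N\in\hyperN{\sigma}$ the index $\nint{(N)}_\eps\in\N$ is finite, so
\[
\left|\sum_{n=0}^{\nint{(N)}_\eps}a_{n\eps}\right|\le\sum_{n=0}^{\nint{(N)}_\eps}|a_{n\eps}|\le\bar{s}_\eps\le\rho_\eps^{-Q}
\]
for some $Q\in\N$ (as $\bar{s}\in\rcrho$), which is precisely moderateness over hypersums; hence $(a_n)_n=[a_{n\eps}]_{\text{s}}\in\rcrhos$ for \emph{every} gauge $\sigma$. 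This uniform boundedness, absent under plain convergence, is exactly what lets us keep the whole sequence instead of passing to a tail as in Theorem~\ref{thm:epsConv}. For $s=\hypersum{\rho}{\sigma}a_n$ I would verify \eqref{eq:hyperLimConvHyperSer} directly with the single witness $M=[M_\eps]$: given $q\in\N$, for $\eps$ small one has $q_\eps\ge q$ and hence $\rho_\eps^{q_\eps}\le\rho_\eps^{q}$, while for $N\in\hyperN{\sigma}_{\ge M}$ the comparison $\nint{(N)}_\eps\ge M_\eps$ holds for $\eps$ small; feeding this into \eqref{eq:conv} gives $\left|\sum_{n=0}^{N}a_n-s\right|<\diff{\rho}^{q}$.

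The only delicate step is this last passage from the $\eps$-wise estimate \eqref{eq:conv} to the sharp hyperlimit \eqref{eq:hyperLimConvHyperSer}: one must convert the order relation $N\ge M$ in $\hyperN{\sigma}$ into the pointwise inequality $\nint{(N)}_\eps\ge M_\eps$ valid for small $\eps$, and absorb the gap between the variable exponent $q_\eps$ and the fixed $q$ by means of $q_\eps\to+\infty$. This is carried out exactly as in the proof of Theorem~\ref{thm:epsConv} and of \cite[Thm.~28]{MTAG}; the construction of $M_\eps$ and the uniform tail bound are then immediate from absolute convergence.
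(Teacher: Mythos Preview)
Your proof is correct and follows the same overall strategy as the paper: build $q_\eps$, $M_\eps$ from absolute convergence so that \eqref{eq:conv} holds, invoke the construction of Theorem~\ref{thm:epsConv} to obtain the gauge $\sigma\le\mu$, and then verify moderateness over hypersums and convergence of the hyperseries directly.

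There is one genuine and worthwhile difference. For the moderateness claim $(a_n)_n\in\rcrhos$, the paper applies Theorem~\ref{thm:epsConv} to the \emph{absolute} series $\sum|a_{n\eps}|$, extracting $\bar{s}_M$ and the tail hypersum $\hypersum{\rho}{\sigma}|a_{n+M}|$, and then bounds $\left|\sum_{n=0}^{\nint{(N)}_\eps}a_{n\eps}\right|$ by $\bar{s}_M+\hypersum{\rho}{\sigma}|a_{n+M}|$. You instead observe directly that $\left|\sum_{n=0}^{\nint{(N)}_\eps}a_{n\eps}\right|\le\bar{s}_\eps$, which is $\rho$-moderate by hypothesis. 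This is shorter, avoids any appeal to the shifted sequence $(a_{n+M})_n$, and even gives the stronger remark that $(a_n)_n\in\rcrhos$ for \emph{every} gauge $\sigma$, not only for the constructed one. The paper's route, on the other hand, makes Theorem~\ref{thm:epsConv} do all the work and keeps the proof of Theorem~\ref{thm:epsConvAbs} to a pure corollary. Both arguments are valid; yours is the more economical.
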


\begin{proof}
Set $q_{\eps}:=\left\lceil \frac{1}{\eps}\right\rceil $, so that
the absolute convergence of $\sum_{n=0}^{+\infty}a_{n\eps}$ yields
\begin{equation}
\forall\eps\,\exists M_{\eps}\in\N_{>0}\,\forall N\ge M_{\eps}:\ \bar{s}_{\eps}-\rho_{\eps}^{q_{\eps}}<\sum_{n=0}^{N}\left|a_{n\eps}\right|<\bar{s}_{\eps}+\rho_{\eps}^{q_{\eps}}\label{eq:conv2}
\end{equation}
and we can thereby apply the previous Thm.~\ref{thm:epsConv} obtaining
$\bar{s}_{M}:=\left[\sum_{n=0}^{M_{\eps}-1}|a_{n\eps}|\right]$, $\hypersum{\rho}{\sigma}\left|a_{n+M}\right|\in\rcrho$.
Therefore, for an arbitrary $N\in\hypNs$, we have:
\begin{align*}
\left[\left|\sum_{n=0}^{\nint{(N)}_{\eps}}a_{n\eps}\right|\right] & \le\left[\sum_{n=0}^{\nint{(N)}_{\eps}}\left|a_{n\eps}\right|\right]\le\left[\sum_{n=0}^{M_{\eps}+\nint{(N)}_{\eps}}\left|a_{n\eps}\right|\right]=\\
 & =\bar{s}_{M}+\hypersum{\rho}{\sigma}\left|a_{n+M}\right|\in\rcrho,
\end{align*}
and this shows that $(a_{n})_{n}\in\rcrhos$. Now, proceeding as above
we can prove that $s=\hypersum{\rho}{\sigma}a_{n}$.
\end{proof}

\subsection{Basic properties of hyperfinite sums and hyperseries}

We now study some basic properties of hyperfinite sums \eqref{eq:hyperfiniteSum}.
\begin{lem}
\label{lem:N+M}Let $\sigma$, $\rho$ be arbitrary gauges, $(a_{n})_{n}\in\rcrhos$,
and $M$, $N\in\hypNs$, then
\begin{equation}
\sum_{n=0}^{N+M}a_{n}-\sum_{n=0}^{N}a_{n}=\sum_{n=N+1}^{N+M}a_{n}.\label{eq:N+M}
\end{equation}
\end{lem}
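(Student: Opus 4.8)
The plan is to reduce the identity \eqref{eq:N+M} to the corresponding elementary identity on representatives. Since $(a_n)_n=[a_{n\eps}]\in\rcrhos$, all three hypersums appearing in \eqref{eq:N+M} are well-defined elements of $\rcrho$ by Thm.~\ref{thm:seriesWellDef}, so it suffices to verify the equation at the level of the chosen representatives $a_{n\eps}$ and then pass to equivalence classes. The key point is that for fixed $\eps$ the quantities $\nint{(N)}_{\eps}$, $\nint{(M)}_{\eps}$ and $\nint{(N+M)}_{\eps}$ are honest natural numbers, so that $\sum_{n=0}^{\nint{(N+M)}_{\eps}}a_{n\eps}$, $\sum_{n=0}^{\nint{(N)}_{\eps}}a_{n\eps}$, and $\sum_{n=\nint{(N)}_{\eps}+1}^{\nint{(N+M)}_{\eps}}a_{n\eps}$ are ordinary finite sums of reals, for which the splitting identity is immediate by associativity of addition in $\R$.

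First I would unwind the definitions: by \eqref{eq:hyperfiniteSum} the left-hand side of \eqref{eq:N+M} equals $\bigl[\sum_{n=0}^{\nint{(N+M)}_{\eps}}a_{n\eps}-\sum_{n=0}^{\nint{(N)}_{\eps}}a_{n\eps}\bigr]$, while the right-hand side equals $\bigl[\sum_{n=\nint{(N+1)}_{\eps}}^{\nint{(N+M)}_{\eps}}a_{n\eps}\bigr]$. For a fixed $\eps$, writing $p:=\nint{(N)}_{\eps}$ and $r:=\nint{(N+M)}_{\eps}$ with $p\le r$, the elementary telescoping identity gives
\[
\sum_{n=0}^{r}a_{n\eps}-\sum_{n=0}^{p}a_{n\eps}=\sum_{n=p+1}^{r}a_{n\eps}.
\]
Thus the $\eps$-wise difference of the two finite partial sums is exactly the finite sum over the index range $p+1\le n\le r$, and passing to the class in $\rcrho$ yields the asserted equality.

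The one genuine subtlety is the matching of the lower index $\nint{(N+1)}_{\eps}$ on the right-hand side with $\nint{(N)}_{\eps}+1$: the hard part will be checking that the integer-part (or ``nearest integer'') operator $\nint{(-)}$ behaves compatibly with the addition $N+1$ in $\hypNs$, i.e.~that $\nint{(N+1)}_{\eps}=\nint{(N)}_{\eps}+1$ for $\eps$ small, and likewise that $\nint{(N+M)}_{\eps}=\nint{(N)}_{\eps}+\nint{(M)}_{\eps}$ up to a negligible adjustment in the summation bounds. Here I would invoke that $N$, $M\in\hypNs$ admit representatives valued in $\N$, so that $\nint{(-)}$ acts as the identity on them and these index identities hold exactly (or can be arranged to hold for $\eps$ small), exactly as in the $\eps$-wise manipulations already used in Example~\ref{exa:div}. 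Once the index bookkeeping is settled, the equality is a pointwise statement about finite real sums, so no moderateness or negligibility estimate beyond Thm.~\ref{thm:seriesWellDef} is required, and the proof closes by passing to $\sim_\rho$-classes.
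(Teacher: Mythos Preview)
Your proposal is correct and follows essentially the same route as the paper: pick $\N$-valued representatives $N_\eps$, $M_\eps$ so that the nearest-integer operator acts as the identity, reduce to the elementary splitting $\sum_{n=0}^{N_\eps+M_\eps}a_{n\eps}=\sum_{n=0}^{N_\eps}a_{n\eps}+\sum_{n=N_\eps+1}^{N_\eps+M_\eps}a_{n\eps}$ of finite real sums, and pass to classes. The ``subtlety'' you flag about $\nint{(N+1)}_\eps$ versus $\nint{(N)}_\eps+1$ is handled in the paper exactly as you suggest, by simply choosing integer-valued representatives from the start.
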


\begin{proof}
For simplicity, if $N=[N_{\eps}]$, $M=[M_{\eps}]$ with $N_{\eps},M_{\eps}\in\N$
for all $\eps$, then $\sum_{n=0}^{N}a_{n}=\left[\sum_{n=0}^{N_{\eps}}a_{n\eps}\right]\in\rcrho$
and
\begin{align*}
\sum_{n=0}^{N+M}a_{n} & =\left[\sum_{n=0}^{N_{\eps}+M_{\eps}}a_{n\eps}\right]=\\
 & =\left[\sum_{n=0}^{N_{\eps}}a_{n\eps}+\sum_{N_{\eps}+1}^{N_{\eps}+M_{\eps}}a_{n\eps}\right]=\\
 & =\sum_{n=0}^{N}a_{n}+\sum_{n=N+1}^{N+M}a_{n}.
\end{align*}
\end{proof}
\begin{lem}
\label{lem:NtoN+M}If $\hypersum{\rho}{\sigma}a_{n}$ is convergent
and $M\in\hypNs$, then
\[
\hypersum{\rho}{\sigma}a_{n}=\hyperlimarg{\rho}{\sigma}{N}\sum_{n=0}^{N+M}a_{n}.
\]
Therefore, from \eqref{eq:N+M}, we also have
\[
\hyperlimarg{\rho}{\sigma}{N}\sum_{n=N}^{N+M}a_{n}=0.
\]
In particular, \textup{$\hyperlim{\rho}{\sigma}a_{n}=0$.}
\end{lem}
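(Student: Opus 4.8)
The plan is to derive all three assertions from the translation-invariance of the hyperlimit encoded in \eqref{eq:hyperLimConvHyperSer}, together with the telescoping identity \eqref{eq:N+M}. Set $s:=\hypersum{\rho}{\sigma}a_{n}=\hyperlimarg{\rho}{\sigma}{N}\sum_{n=0}^{N}a_{n}$, which exists in $\rcrho$ by hypothesis. First I would note that the hypersequence $N\in\hyperN{\sigma}\mapsto\sum_{n=0}^{N+M}a_{n}$ is well defined, since $N+M\in\hyperN{\sigma}$ and, by item~\ref{enu:extension} of Lemma~\ref{lem:Rtilu-RtilsComparison}, its value depends only on $(a_{n})_{n}\in\rcrhos$ and not on the chosen representative. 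To prove $s=\hyperlimarg{\rho}{\sigma}{N}\sum_{n=0}^{N+M}a_{n}$ I would read off \eqref{eq:hyperLimConvHyperSer} directly: given $q\in\N$, pick $K\in\hyperN{\sigma}$ with $\bigl|\sum_{n=0}^{P}a_{n}-s\bigr|<\diff{\rho}^{q}$ for every $P\in\hyperN{\sigma}_{\ge K}$; since $M\ge 0$, every $N\in\hyperN{\sigma}_{\ge K}$ satisfies $N+M\ge N\ge K$, so $P:=N+M$ is admissible and $\bigl|\sum_{n=0}^{N+M}a_{n}-s\bigr|<\diff{\rho}^{q}$. This is exactly \eqref{eq:hyperLimConvHyperSer} for the shifted hypersequence, yielding the first identity.

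For the second identity I would combine this with \eqref{eq:N+M}. Applying \eqref{eq:N+M} with the index shift $N\mapsto N-1$, $M\mapsto M+1$ (legitimate for $N\ge 1$, which is automatic on the cofinal range $N\ge K$ once $K\ge 1$) rewrites the block sum as a difference of two initial hypersums, $\sum_{n=N}^{N+M}a_{n}=\sum_{n=0}^{N+M}a_{n}-\sum_{n=0}^{N-1}a_{n}$. Both terms converge to $s$: the first by the step just proved, and the second by the same argument applied to the shift $N\mapsto N-1$ (now choosing the threshold $K+1$, so that $N\ge K+1$ forces $N-1\ge K$). Making the subtraction explicit, for $N$ beyond the maximum of the two thresholds I would estimate $\bigl|\sum_{n=N}^{N+M}a_{n}\bigr|\le\bigl|\sum_{n=0}^{N+M}a_{n}-s\bigr|+\bigl|\sum_{n=0}^{N-1}a_{n}-s\bigr|<2\diff{\rho}^{q+1}\le\diff{\rho}^{q}$, where the last inequality holds because $2\rho_{\eps}\le 1$ for $\eps$ small, i.e.\ $2\diff{\rho}\le 1$ in $\rcrho$. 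This is \eqref{eq:hyperLimConvHyperSer} with limit $0$, so $\hyperlimarg{\rho}{\sigma}{N}\sum_{n=N}^{N+M}a_{n}=0$.

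Finally, the ``in particular'' clause is the special case $M=0\in\hyperN{\sigma}$ of the identity just established: then $\sum_{n=N}^{N+M}a_{n}=\sum_{n=N}^{N}a_{n}=a_{N}$ (again well defined by item~\ref{enu:extension} of Lemma~\ref{lem:Rtilu-RtilsComparison}), whence $\hyperlim{\rho}{\sigma}a_{n}=\hyperlimarg{\rho}{\sigma}{N}a_{N}=0$. I expect no analytic difficulty here: the only point requiring care is the bookkeeping of the hyperlimit thresholds under the constant index shifts $N\mapsto N+M$ and $N\mapsto N-1$, namely verifying that shifting the dummy hypernatural by a fixed amount keeps it inside $\hyperN{\sigma}$ and preserves the tail condition $N\ge K$; the factor $2$ from the triangle inequality is harmlessly absorbed because $\diff{\rho}$ is infinitesimal. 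Convergence is governed entirely by the ultrametric/sharp structure already captured by \eqref{eq:hyperLimConvHyperSer}, so there is no genuine obstacle beyond this indexing care.
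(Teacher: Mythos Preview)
Your argument is correct and follows the same approach as the paper: the key observation is that $N\ge K$ forces $N+M\ge K$, which is exactly what the paper's proof says. Your write-up is more detailed than the paper's, which only spells out the first identity and leaves the remaining two assertions as immediate consequences of \eqref{eq:N+M}; your explicit index shift $N\mapsto N-1$, $M\mapsto M+1$ and your absorption of the factor $2$ via $2\diff{\rho}^{q+1}\le\diff{\rho}^{q}$ are sound and fill in exactly what the paper takes for granted.
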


\begin{proof}
Let $s:=\hypersum{\rho}{\sigma}a_{n}$. Directly from the definition
of convergent hyperseries \eqref{eq:hyperLimConvHyperSer}, we have

\begin{equation}
\forall q\in\N\,\exists K\in\hyperN{\sigma}\,\forall N\in\hyperN{\sigma}_{\ge K}:\ \left|\sum_{n=0}^{N}a_{n}-s\right|<\diff{\rho}^{q}.\label{eq:hyperLimConvHyperSer-1}
\end{equation}

\noindent In particular, if $N\in\hyperN{\sigma}_{\ge K}$, then also
$N+M\in\hyperN{\sigma}_{\ge K}$, which is our conclusion.
\end{proof}
Directly from Lem.~\ref{lem:N+M}, we also have:
\begin{cor}
\label{cor:convAddRem}Let $\hypersum{\rho}{\sigma}a_{n}$ be a convergent
hyperseries. Then adding a hyperfinite number of terms have no effect
on the convergence of the hyperseries, that is 
\[
\sum_{n=0}^{K}a_{n}+\hyperlimarg{\rho}{\sigma}{N}\sum_{n=K+1}^{N}a_{n}=\hypersum{\rho}{\sigma}a_{n}\quad\forall K\in\hypNs.
\]
\end{cor}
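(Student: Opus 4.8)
The plan is to reduce everything to Lemma~\ref{lem:N+M} together with the definition of a convergent hyperseries, since the corollary is essentially a rephrasing of hyperfinite additivity of partial sums. Fix $K\in\hypNs$ and write $s:=\hypersum{\rho}{\sigma}a_{n}$, which exists by hypothesis. The key observation is that for every $N\in\hyperN{\sigma}$ with $N\ge K$ we may apply \eqref{eq:N+M} in the form
\[
\sum_{n=0}^{N}a_{n}=\sum_{n=0}^{K}a_{n}+\sum_{n=K+1}^{N}a_{n},
\]
so that the ``tail'' partial sum $\sum_{n=K+1}^{N}a_{n}$ equals $\sum_{n=0}^{N}a_{n}-\sum_{n=0}^{K}a_{n}$. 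Since $\sum_{n=0}^{K}a_{n}\in\rcrho$ is a fixed element (independent of $N$), taking the hyperlimit as $N\to+\infty$ in $\hyperN{\sigma}$ is a matter of subtracting a constant.

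More precisely, I would argue directly from the $\eps$-$q$ characterization of hyperlimits in \eqref{eq:hyperLimConvHyperSer}. Given $q\in\N$, convergence of $\hypersum{\rho}{\sigma}a_{n}$ to $s$ furnishes an $L\in\hyperN{\sigma}$ with $\left|\sum_{n=0}^{N}a_{n}-s\right|<\diff{\rho}^{q}$ for all $N\in\hyperN{\sigma}_{\ge L}$. Replacing $L$ by any common upper bound of $L$ and $K$ (which exists in $\hypNs$ since $\hyperN{\sigma}$ is directed), I get for all such $N$ that
\[
\left|\sum_{n=K+1}^{N}a_{n}-\Bigl(s-\sum_{n=0}^{K}a_{n}\Bigr)\right|=\left|\sum_{n=0}^{N}a_{n}-s\right|<\diff{\rho}^{q},
\]
where the equality uses \eqref{eq:N+M}. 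This is exactly the statement that $\hyperlimarg{\rho}{\sigma}{N}\sum_{n=K+1}^{N}a_{n}=s-\sum_{n=0}^{K}a_{n}$, which rearranges to the claimed identity.

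I do not expect any serious obstacle here, as the result is a formal consequence of Lemma~\ref{lem:N+M} and the definition. The only point deserving care is that the hyperfinite tail $\sum_{n=K+1}^{N}a_{n}$ is well-defined and lands in $\rcrho$ for each $N$: this is guaranteed by Theorem~\ref{thm:seriesWellDef} since $(a_{n})_{n}\in\rcrhos$, so the expression $\hyperlimarg{\rho}{\sigma}{N}\sum_{n=K+1}^{N}a_{n}$ genuinely makes sense as the hyperlimit of a hypersequence valued in $\rcrho$. One might also remark, as in Lemma~\ref{lem:NtoN+M}, that the subtlety specific to the non-Archimedean setting --- namely that $K$ itself may be an infinite hypernatural and $\sum_{n=0}^{K}a_{n}$ a genuinely infinitesimal-free correction --- is already absorbed by working throughout in $\hyperN{\sigma}$ rather than $\N$.
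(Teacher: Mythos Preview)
Your proof is correct and follows essentially the same approach as the paper, which simply states that the corollary follows directly from Lemma~\ref{lem:N+M}; you have spelled out in detail the passage from that lemma to the hyperlimit via the $\eps$-$q$ characterization \eqref{eq:hyperLimConvHyperSer}, including the (harmless) step of replacing $L$ by an upper bound of $L$ and $K$ in $\hyperN{\sigma}$.
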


Mimicking the classical theory, we can also say that the hyperseries
$\hypersum{\rho}{\sigma}a_{n}$ is \emph{Cesàro hypersummable} if
$\exists\,\hyperlimarg{\rho}{\sigma}{N}\frac{1}{N+1}\sum_{n=0}^{N}\sum_{k=0}^{n}a_{k}$
(this hyperlimit is clearly called \emph{Cesàro hypersum}). For example,
proceeding $\eps$-wise, we have that the hyperseries $\hypersum{\rho}{\sigma}(-1)^{n}$
has Cesàro hypersum equal to $\frac{1}{2}$. Trivially generalizing
the usual proof, we can show that if $\hypersum{\rho}{\sigma}a_{n}$
converges to $s$, then it is also Cesàro hypersummable with the same
hypersum $s$.

\section{Hyperseries convergence tests}

\subsection{\label{subsec:pSeries}$p$-hyperseries}

To deal with $p$-hyperseries, i.e.~hyperseries of the form
\[
\parthypersumarg{\rho}{\sigma}{\hspace{1em}n}{\ge1}\frac{1}{n^{p}},
\]
we always assume that:
\begin{enumerate}
\item $p\in\rcrho$ is a \emph{finite} generalized number such that $p>0$
or $p<0$; we recall that if $p$ is an infinite number (at least
on a subpoint), the operation $n^{p}$ is in general not well-defined
since it leads to a non $\rho$-moderate net (see also~Example \ref{exa:div}.\ref{enu:div6}).
\item The gauges $\sigma$ and $\rho$ are chosen so that $\left(\frac{1}{n^{p}}\right)_{n}\in\rcrhos$.
A \emph{sufficient} condition for this is that $\sigma\ge\rho^{*}$,
i.e.~that $\R_{\sigma}\subseteq\R_{\rho}$. In fact, since $p$ is
finite, we have $\left(\sum_{n=1}^{N_{\eps}}n^{-p_{\eps}}\right)\le N_{\eps}N_{\eps}^{-p_{\eps}}\in\R_{\sigma}\subseteq\R_{\rho}$
whenever $N_{\eps}\in\N$, $\left(N_{\eps}\right)\in\N_{\sigma}$.
\end{enumerate}
Note explicitly that, from the assumption $\left(\frac{1}{n^{p}}\right)_{n}\in\rcrhos$,
we hence have
\begin{equation}
\sum_{n=1}^{N}\frac{1}{n^{p}}=\left[\sum_{n=1}^{N_{\eps}}\frac{1}{n^{p_{\eps}}}\right]\in\rcrho\label{eq:pSeriesSum}
\end{equation}
even if $N\in\hypNs\subseteq\RC{\sigma}$, and $\RC{\sigma}$ is a
different quotient ring with respect to $\rcrho$ (e.g., in general
we do \emph{not} have $\hypNs\subseteq\hypNr$). In other words, it
is not correct to say that the left hand side of \eqref{eq:pSeriesSum}
is the sum for $1\le n\le N\in\hypNs$ of terms $\frac{1}{n^{p}}\in\RC{\sigma}$,
but instead it is a way to use $(N_{\eps})\in\N_{\sigma}$ to compute
a generalized number of $\rcrho$.

If $p<0$, the general term $\frac{1}{n^{p}}\not\to0$ because $n^{-p}\le\diff{\rho}^{q}$
only if $n<\diff{\rho}^{-q/p}$. Therefore, Lem.~\ref{lem:NtoN+M}
yields that the $p$-series diverges.\, We can hence consider only
the case $p>0$. Let us note that Thm.~\ref{thm:seriesWellDef},
which is clearly based on our Def.~\ref{def:RCringHyperseries} of
$\rcrhos$, allows us to consider partial hypersums of the form $\sum_{n=1}^{N}\frac{1}{n^{p}}$
even if $p<0$. In other words, we can correctly argue that the $p$-hyperseries
does not converge if $p<0$ not because the partial hypersums cannot
be computed, but because the general term does not converge to zero.
The situation would be completely different if we restrict our attention
only to sequences $(a_{n})_{n}$ satisfying the (more natural) uniformly
moderate condition \eqref{eq:uniformlyModerate}. Similar remarks
can be formulated for the calculus of divergent hyperseries in Example
\ref{exa:GeomExpSeries}, \ref{enu:div1}-\ref{enu:div6}.

To prove the following results, we will follow some ideas of \cite{Han05}.
\begin{thm}
\label{thm:ineq-P-Series}Let $p\in\rcrho_{>0}$ and let $S_{N}(p)$
be the $N$-th partial sum of the $p$-hyperseries, where $N\in\hypNs_{\ge1}$,
then
\begin{equation}
1-\frac{1}{2^{p}}+\frac{2}{2^{p}}S_{N}(p)<S_{2N}(p)<1+\frac{2}{2^{p}}S_{N}(p).\label{eq:pGreater0}
\end{equation}
\end{thm}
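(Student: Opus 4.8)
The plan is to reduce the whole statement to elementary $\eps$-wise estimates on representatives and then lift the resulting strict real inequalities to the sharp order of $\rcrho$. Write $p=[p_\eps]$ with $p_\eps>0$ and $(p_\eps)$ bounded (possible since $p\in\rcrho_{>0}$ is finite), fix a representative $N=[N_\eps]\in\hypNs$ with $N_\eps\in\N$, and note $2N=[2N_\eps]\in\hypNs$. By the standing assumption $(1/n^p)_n\in\rcrhos$ together with Thm.~\ref{thm:seriesWellDef}, the partial hypersums are represented by $S_N(p)=\big[S_{N,\eps}(p)\big]$ and $S_{2N}(p)=\big[S_{2N,\eps}(p)\big]$, where $S_{N,\eps}(p):=\sum_{n=1}^{N_\eps}n^{-p_\eps}$. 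It then suffices to prove \eqref{eq:pGreater0} $\eps$-wise, with each gap bounded below by a fixed power of $\rho_\eps$ for $\eps$ small.

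First I would separate the even- and odd-indexed terms of $S_{2N,\eps}(p)$. The even part is computed exactly,
\[
\sum_{k=1}^{N_\eps}(2k)^{-p_\eps}=2^{-p_\eps}\sum_{k=1}^{N_\eps}k^{-p_\eps}=\frac{1}{2^{p_\eps}}\,S_{N,\eps}(p),
\]
so that $S_{2N,\eps}(p)=2^{-p_\eps}S_{N,\eps}(p)+O_{N,\eps}$ with $O_{N,\eps}:=\sum_{k=1}^{N_\eps}(2k-1)^{-p_\eps}$. Since $p_\eps>0$ the map $t\mapsto t^{-p_\eps}$ is strictly decreasing, and comparing the odd index $2k-1$ with its two neighbours controls the odd part from both sides: $(2k-1)^{-p_\eps}>(2k)^{-p_\eps}$ for $k\ge2$ gives $O_{N,\eps}>1-2^{-p_\eps}+2^{-p_\eps}S_{N,\eps}(p)$, while $(2k-1)^{-p_\eps}<(2k-2)^{-p_\eps}=2^{-p_\eps}(k-1)^{-p_\eps}$ for $k\ge2$ gives $O_{N,\eps}<1+2^{-p_\eps}S_{N,\eps}(p)$. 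Substituting these into $S_{2N,\eps}(p)=2^{-p_\eps}S_{N,\eps}(p)+O_{N,\eps}$ yields precisely the $\eps$-wise forms of the lower and upper bounds in \eqref{eq:pGreater0}.

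The step I expect to be the main obstacle is upgrading these strict \emph{real} inequalities to strict inequalities in $\rcrho$, i.e.\ showing each gap is not merely positive but non-infinitesimal. A direct simplification makes both gaps explicit: the lower gap equals $\sum_{k=2}^{N_\eps}\big[(2k-1)^{-p_\eps}-(2k)^{-p_\eps}\big]$, which for $N_\eps\ge2$ is $\ge 3^{-p_\eps}-4^{-p_\eps}$ (its first summand, the rest being positive); and the upper gap equals $2^{-p_\eps}-\sum_{k=2}^{N_\eps}\big[(2k-1)^{-p_\eps}-(2k)^{-p_\eps}\big]\ge 2^{-p_\eps}-3^{-p_\eps}$, where a telescoping rearrangement bounds the subtracted sum by $3^{-p_\eps}$. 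Because $p>0$ in $\rcrho$ forces $p_\eps\ge\rho_\eps^{q_0}$ for some $q_0\in\N$ and $\eps$ small, an elementary estimate of the smooth function $p\mapsto a^{-p}-b^{-p}$ near $0$ (using that $(p_\eps)$ is bounded) gives $3^{-p_\eps}-4^{-p_\eps}\ge c\,p_\eps$ and $2^{-p_\eps}-3^{-p_\eps}\ge c'\,p_\eps$ for positive constants $c,c'$; both gaps therefore dominate $\rho_\eps^{q_0+1}$ eventually, which is exactly strict positivity in $\rcrho$.

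The one genuinely delicate point is the boundary value $N=1$: there the lower gap is an empty sum and the lower bound degenerates to an \emph{equality}, so the strict lower bound really needs $N\ge2$ (equivalently $N_\eps\ge2$ for $\eps$ small, at least on the relevant subpoints), whereas the upper bound already holds strictly for every $N\ge1$ since its gap is $\ge 2^{-p_\eps}-3^{-p_\eps}$ independently of $N$. I would therefore either state the hypothesis as $N\ge2$ or record this degeneracy explicitly. No extra work is needed for moderateness, since $S_N(p),S_{2N}(p)\in\rcrho$ is already guaranteed by $(1/n^p)_n\in\rcrhos$.
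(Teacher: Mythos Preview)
Your argument is essentially the paper's own proof: split $S_{2N}$ into its even- and odd-indexed parts, compute the even part exactly as $2^{-p}S_N(p)$, and bound the odd part above and below by comparing each $(2k-1)^{-p}$ with its two even neighbours. The paper stops at the $\eps$-wise strict real inequalities and passes directly to strict inequalities in $\rcrho$; you go further and explicitly verify that both gaps dominate a power of $\rho_\eps$, which is what strict order in $\rcrho$ actually demands. That extra step is correct and is a genuine refinement over the paper's presentation.

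You have also spotted a point the paper overlooks: at $N=1$ the lower gap $\sum_{k=2}^{N_\eps}\big[(2k-1)^{-p_\eps}-(2k)^{-p_\eps}\big]$ is an empty sum, so the strict lower inequality in \eqref{eq:pGreater0} degenerates to an equality there. Your diagnosis is right, and your suggested remedy (state $N\ge2$, or record the degeneracy) is the honest fix; the paper's application in Thm.~\ref{thm:pSeriesConv} only uses the inequality after letting $N\to\infty$ and in fact only uses the weak form $\le$, so the slip is harmless for the sequel, but the statement of Thm.~\ref{thm:ineq-P-Series} as written does not hold strictly at $N=1$.
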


\begin{proof}
Let $N=[N_{\eps}]\in\hypNs_{\ge1}$, with $N_{\eps}\in\N_{\ge1}$
for all $\eps$, and $p=[p_{\eps}]$, where $p_{\eps}>0$ for all
$\eps$. Since 
\begin{align*}
S_{2N}(p) & =\left[\sum_{n=1}^{2N_{\eps}}\frac{1}{n^{p_{\eps}}}\right]=\left[1+\frac{1}{2^{p_{\eps}}}+\frac{1}{3^{p_{\eps}}}+\dots+\frac{1}{(2N_{\eps}))^{p_{\eps}}}\right]=\\
 & =1+\left[\frac{1}{2^{p_{\eps}}}+\frac{1}{4^{p_{\eps}}}+\dots+\frac{1}{((2N_{\eps}))^{p_{\eps}}}\right]+\left[\frac{1}{3^{p_{\eps}}}+\frac{1}{5^{p_{\eps}}}+\dots+\frac{1}{((2N_{\eps}-1))^{p_{\eps}}}\right].
\end{align*}

\noindent But
\begin{equation}
1+\left[\frac{1}{2^{p_{\eps}}}+\frac{1}{4^{p_{\eps}}}+\dots+\frac{1}{((2N_{\eps}))^{p_{\eps}}}\right]=1+\frac{1}{2^{p}}S_{N}(p)\label{eq:even}
\end{equation}
and, since $p_{\eps}>0$, we also have
\begin{align}
\left[\frac{1}{3^{p_{\eps}}}+\frac{1}{5^{p_{\eps}}}+\dots+\frac{1}{((2N_{\eps}-1))^{p_{\eps}}}\right] & >\left[\frac{1}{4^{p_{\eps}}}+\frac{1}{6^{p_{\eps}}}+\dots+\frac{1}{((2N_{\eps}))^{p_{\eps}}}\right]\nonumber \\
 & =\frac{1}{2^{p}}S_{N}(p)-\frac{1}{2^{p}}.\label{eq:odd}
\end{align}
Summing \eqref{eq:even} and \eqref{eq:odd} yields $S_{2N}(p)>1-\frac{1}{2^{p}}+\frac{2}{2^{p}}S_{N}(p)$.
Finally, from \eqref{eq:even} and
\begin{align*}
\left[\frac{1}{3^{p_{\eps}}}+\frac{1}{5^{p_{\eps}}}+\dots+\frac{1}{((2N_{\eps}-1))^{p_{\eps}}}\right] & <\left[\frac{1}{2^{p_{\eps}}}+\frac{1}{4^{p_{\eps}}}+\dots+\frac{1}{((2N_{\eps}))^{p_{\eps}}}\right]=\\
 & =\frac{1}{2^{p}}S_{N}(p)
\end{align*}
we get $S_{2N}(p)<1+\frac{1}{2^{p}}S_{N}(p)+\frac{1}{2^{p}}S_{N}(p)$.
\end{proof}
\begin{thm}
\label{thm:pSeriesConv}The $p$-hyperseries is divergent when $0<p\sbpt{\le}1$.
When $p>1$, there exist some gauge $\tau\le\rho$ such that $\left(\frac{1}{n^{p}}\right)_{n}\in\rcrho_{\tau}$
and the $p$-series is convergent with respect to the gauges $\tau$,
$\rho$ (to the usual real value $\zeta(p)$ if $p\in\R$), and in
this case

\begin{equation}
\frac{2^{p}-1}{2^{p}-2}\leq\parthypersumarg{\rho}{\tau}{\hspace{1em}n}{\ge1}\frac{1}{n^{p}}=\sum_{n=1}^{+\infty}\frac{1}{n^{p}}\leq\frac{2^{p}}{2^{p}-2}.\label{eq:p-seriesEstimate}
\end{equation}
\end{thm}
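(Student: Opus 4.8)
The plan is to treat the two regimes separately, in both cases exploiting the two-sided recursion \eqref{eq:pGreater0} of Thm.~\ref{thm:ineq-P-Series}. For the divergent case $0<p\sbpt{\le}1$, I fix gauges with $\sigma\ge\rho^{*}$, so that $(1/n^{p})_{n}\in\rcrhos$ by the standing assumptions of the subsection, and suppose for contradiction that $\hyperlimarg{\rho}{\sigma}{N}S_{N}(p)=s\in\rcrho$ exists. The decisive elementary observation is that $S_{N}(p)\ge1$ for every $N\in\hypNs_{\ge1}$, since the first summand equals $1$. Inserting this into the left inequality of \eqref{eq:pGreater0}, and noting that on a subpoint $L\subzero I$ we have $p_{\eps}\le1$, hence $\tfrac{2}{2^{p}}-1\ge0$, I obtain
\begin{equation*}
S_{2N}(p)-S_{N}(p)>\Big(1-\tfrac{1}{2^{p}}\Big)+\Big(\tfrac{2}{2^{p}}-1\Big)S_{N}(p)\ge 1-\tfrac{1}{2^{p}}+\tfrac{2}{2^{p}}-1=\tfrac{1}{2^{p}}\ge\tfrac12
\end{equation*}
on $L$. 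Since $N\mapsto 2N$ is cofinal in $\hypNs$, convergence of $(S_{N}(p))$ would force $|S_{2N}(p)-S_{N}(p)|<2\diff{\rho}$ for all large $N$, contradicting $S_{2N}(p)-S_{N}(p)\sbpt{>}\tfrac12$ on $L$ (where $\tfrac12>2\rho_{\eps}$ for $\eps$ small). Hence the hypersequence of partial sums is not Cauchy and the $p$-hyperseries diverges.

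For the convergent case $p>1$ the first step is to produce the gauge $\tau$. Since $p>1$ is invertible, there is $m\in\N$ with $p_{\eps}-1\ge\rho_{\eps}^{m}$ for $\eps$ small, whence $2^{p_{\eps}}-2=2(2^{p_{\eps}-1}-1)\ge 2(p_{\eps}-1)\ln2\ge\rho_{\eps}^{m}$; together with $p$ finite (so $2^{p_{\eps}}\le 2^{M}$) this makes the real estimate $\zeta(p_{\eps})\le\frac{2^{p_{\eps}}}{2^{p_{\eps}}-2}$ (the $\eps$-wise content of \eqref{eq:pGreater0} as $N\to\infty$) bound $\zeta(p_{\eps})$ by a $\rho$-moderate net, so $[\zeta(p_{\eps})]\in\rcrho$. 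As the standard series $\sum_{n\ge1}n^{-p_{\eps}}$ converges absolutely to $\zeta(p_{\eps})$, I then apply Thm.~\ref{thm:epsConvAbs} (with $\mu:=\rho$) to the sequence $a_{n}:=1/n^{p}$: this produces a gauge $\tau\le\rho$ with $(1/n^{p})_{n}\in\rcrhos$ and $[\zeta(p_{\eps})]=\hyperlimarg{\rho}{\tau}{N}S_{N}(p)=:S$, which is precisely the asserted convergence (and equals $\zeta(p)$ when $p\in\R$).

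It then remains to extract \eqref{eq:p-seriesEstimate}. Because $N\mapsto2N$ is cofinal, $S_{2N}(p)\to S$ in the sharp topology as well, so passing to the hyperlimit in \eqref{eq:pGreater0} (where the closedness of $\{x\ge 0\}$ relaxes the strict inequalities to non-strict ones) yields
\begin{equation*}
1-\tfrac{1}{2^{p}}+\tfrac{2}{2^{p}}S\le S\le 1+\tfrac{2}{2^{p}}S.
\end{equation*}
Since $p>1$ makes $1-\tfrac{2}{2^{p}}=\tfrac{2^{p}-2}{2^{p}}$ positive and invertible (by the same estimate as above), I can divide through and solve the two inequalities for $S$, arriving at exactly $\frac{2^{p}-1}{2^{p}-2}\le S\le\frac{2^{p}}{2^{p}-2}$.

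The step I expect to be the main obstacle is the convergent case, and specifically the $\rho$-moderateness of $\zeta(p_{\eps})$ uniformly as $p_{\eps}$ may tend to $1$: this is exactly where the invertibility of $p-1$ (rather than merely $p>1$ on subpoints) is indispensable, and it is what legitimizes the appeal to Thm.~\ref{thm:epsConvAbs} together with the construction of $\tau$. By contrast, the divergence argument is comparatively soft once the bound $S_{N}(p)\ge1$ is used to pin the right-hand side of the recursion below $\tfrac12$ uniformly on the subpoint, independently of how small $p$ is.
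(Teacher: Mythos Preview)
Your proof is correct and follows the same overall strategy as the paper: both halves rest on the two-sided recursion of Thm.~\ref{thm:ineq-P-Series}, and the convergent case is obtained by an $\eps$-wise transfer. Two differences are worth flagging. For divergence, the paper passes to the hyperlimit in the left inequality of \eqref{eq:pGreater0} and reads off the algebraic contradiction $0<(2^{p}-1)/2^{p}\le S(p)(2^{p}-2)/2^{p}\le 0$ on the subpoint where $p\le 1$; your Cauchy-type argument, bounding $S_{2N}(p)-S_{N}(p)$ below by $\tfrac12$ on that subpoint, is a cleaner way to express the same obstruction and avoids manipulating the hypothetical limit value. (Your computation does not actually use $\sigma\ge\rho^{*}$; it works under the bare standing assumption that $(1/n^{p})_{n}\in\rcrhos$, so you may drop that clause.) For $p>1$, the paper derives the $\eps$-wise bound $S_{N_{\eps}}(p_{\eps})<2^{p_{\eps}}/(2^{p_{\eps}}-2)$ and invokes Thm.~\ref{thm:epsConv} directly, without making explicit why this bound is $\rho$-moderate; you fill that gap via $2^{p_{\eps}-1}-1\ge(p_{\eps}-1)\ln 2$ together with the invertibility of $p-1$, and then appeal to Thm.~\ref{thm:epsConvAbs}, which is the more convenient tool here since the terms are nonnegative. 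Your derivation of the estimates \eqref{eq:p-seriesEstimate} by passing to the hyperlimit in \eqref{eq:pGreater0} is exactly the intended route.
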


\begin{proof}
If $0<p\sbpt{\le}1$, there exist $J\subzero I$ such that $p|_{J}\le1$.\emph{
}Assume that the $p-$hyperseries is convergent and set $\hyperlim{\rho}{\sigma}S_{N}(p)=:S(p)$.
Taking $N\rightarrow\infty$ in the first inequality in \eqref{eq:pGreater0},
we have

\[
1-\text{\ensuremath{\frac{1}{2^{p}}+\frac{2}{2^{p}}S(p)}}=\frac{2^{p}-1}{2^{p}}+\frac{2}{2^{p}}S(p)\leq S(p).
\]
Since for $\eps\in J$ sufficiently small we have $0<p_{\eps}<1$
for some $[p_{\eps}]=p$, we obtain
\[
0<\frac{2^{p_{\eps}}-1}{2^{p_{\eps}}}\leq\frac{2^{p_{\eps}}-2}{2^{p_{\eps}}}S(p)\leq0.
\]
This shows that the $p-$hyperseries is divergent when $p\sbpt{\le}1$.
Now let $p>1$, so that we can assume $p_{\eps}>1$ for all $\eps$.
From \eqref{eq:pGreater0}, we have

\[
S_{N_{\eps}}(p_{\eps})<S_{2N_{\eps}}(p_{\eps})<1+\frac{2}{2^{p_{\eps}}}S_{N_{\eps}}(p_{\eps}),
\]
where $S_{N_{\eps}}(p_{\eps})=\sum_{n=1}^{N_{\eps}}\frac{1}{n^{p_{\eps}}}\in\R$.
Thereby

\[
0<\left(1-\frac{2}{2^{p_{\eps}}}\right)S_{N_{\eps}}(p_{\eps})<1
\]

\[
S_{N_{\eps}}(p_{\eps})<\frac{2^{p_{\eps}}}{2^{p_{\eps}}-2}.
\]
So $S_{N_{\eps}}(p)$ is bounded and increasing, and applying Thm.~\ref{thm:epsConv},
we get the existence of a gauge $\tau$ such that $\left(\frac{1}{n^{p}}\right)_{n}\in\rcrho_{\tau}$
and $\parthypersumarg{\rho}{\tau}{n}{\ge1}\frac{1}{n^{p}}$ converges
to $\sum_{n=1}^{+\infty}\frac{1}{n^{p}}=\zeta(p)$.
\end{proof}
Later, in Cor.~\ref{cor:p-seriesIntTest} and using the integral
test Thm\@.~\ref{thm:intTest}, we will prove the convergence of
$p$-hyperseries for arbitrary gauges $\sigma$, $\rho$ (even if
we will not get the estimates \eqref{eq:p-seriesEstimate}).

\subsection{Absolute convergence}
\begin{thm}
If $\hypersum{\rho}{\sigma}|a_{n}|$ converges then also $\hypersum{\rho}{\sigma}a_{n}$
converges.
\end{thm}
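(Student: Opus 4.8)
The plan is to reduce the claim to the Cauchy completeness of $\rcrho$ with respect to hyperlimits, exactly as in the Archimedean proof that absolute convergence implies convergence, but carrying out the comparison $\eps$-wise on representatives so as to avoid any appeal to a total order on $\hyperN{\sigma}$. First I would record that the hypothesis is meaningful and that the conclusion is even well-posed. Convergence of $\hypersum{\rho}{\sigma}|a_{n}|$ presupposes $(|a_{n}|)_{n}=[|a_{n\eps}|]_{\text{s}}\in\rcrhos$, i.e.~that the net $(|a_{n\eps}|)$ is $\sigma$, $\rho$-moderate over hypersums. Since $\left|\sum_{n=0}^{\nint{(N)}_{\eps}}a_{n\eps}\right|\le\sum_{n=0}^{\nint{(N)}_{\eps}}|a_{n\eps}|$ for every $\eps$ and every $N\in\hyperN{\sigma}$, moderateness over hypersums transfers from $(|a_{n\eps}|)$ to $(a_{n\eps})$; hence $(a_{n})_{n}\in\rcrhos$ and $\hypersum{\rho}{\sigma}a_{n}$ is well-defined in the sense of Def.~\ref{def:hyperseries}.

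Writing $S_{N}:=\sum_{n=0}^{N}a_{n}$ and $T_{N}:=\sum_{n=0}^{N}|a_{n}|$ for $N\in\hyperN{\sigma}$, the heart of the argument is the basic estimate
\[
\left|S_{N}-S_{M}\right|\le\left|T_{N}-T_{M}\right|\qquad\forall M,N\in\hyperN{\sigma}.
\]
This I would prove $\eps$-wise on representatives: for each $\eps$ the two finite partial sums $\sum_{n=0}^{\nint{(N)}_{\eps}}a_{n\eps}$ and $\sum_{n=0}^{\nint{(M)}_{\eps}}a_{n\eps}$ differ only by the terms with index strictly between $\min(\nint{(M)}_{\eps},\nint{(N)}_{\eps})$ and $\max(\nint{(M)}_{\eps},\nint{(N)}_{\eps})$, and regardless of which of $\nint{(M)}_{\eps},\nint{(N)}_{\eps}$ is larger, the absolute value of that difference is bounded by the corresponding sum of the $|a_{n\eps}|$, which equals $|T_{N\eps}-T_{M\eps}|$; passing to classes yields the displayed inequality in $\rcrho$. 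Because $\hypersum{\rho}{\sigma}|a_{n}|$ converges, the hypersequence $(T_{N})_{N\in\hyperN{\sigma}}$ admits the hyperlimit $T:=\hypersum{\rho}{\sigma}|a_{n}|$ and is therefore Cauchy in the sense of hyperlimits: by \eqref{eq:hyperLimConvHyperSer} and the ultrametric triangle inequality of the sharp topology, for every $q\in\N$ there is $K\in\hyperN{\sigma}$ with $|T_{N}-T_{M}|<\diff{\rho}^{q}$ for all $M,N\in\hyperN{\sigma}_{\ge K}$. Combining this with the displayed estimate shows that $(S_{N})_{N\in\hyperN{\sigma}}$ is itself Cauchy.

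The conclusion then follows from the Cauchy completeness of $\rcrho$ with respect to hyperlimits established in \cite{MTAG}: a Cauchy hypersequence indexed by $\hyperN{\sigma}$ possesses a hyperlimit $s\in\rcrho$, and this $s=\hypersum{\rho}{\sigma}a_{n}$ is the asserted sum. The step I expect to be the main obstacle is precisely the invocation of this completeness result in the exact form needed here: one must check that the Cauchy criterion for $\hyperN{\sigma}$-indexed hypersequences from \cite{MTAG} applies with the two gauges $\sigma$, $\rho$ held fixed throughout, and that the $\eps$-wise comparison is legitimate even though the order on $\hyperN{\sigma}$ is only partial, so that $M\le N$ cannot be assumed. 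Both difficulties dissolve once everything is read on representatives, where all the sums in play are ordinary finite sums in $\R$ and the elementary triangle inequality applies termwise.
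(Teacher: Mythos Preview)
Your proposal is correct and follows essentially the same route as the paper: compare Cauchy differences of the partial sums via the triangle inequality $|S_{N}-S_{M}|\le|T_{N}-T_{M}|$ and then invoke the Cauchy criterion for hypersequences from \cite{MTAG}. You are in fact more careful than the paper on two points: you explicitly check that $(a_{n})_{n}\in\rcrhos$ follows from $(|a_{n}|)_{n}\in\rcrhos$, and you justify the key inequality $\eps$-wise so as to sidestep the partial order on $\hyperN{\sigma}$, whereas the paper simply writes the inequality (with a strict $<$, which should really be $\le$) and cites \cite[Thm.~37]{MTAG} directly.
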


\begin{proof}
Cauchy criterion \cite[Thm.~37]{MTAG}, yields

\[
\forall q\in N\,\exists M\in\hyperN{\sigma}\,\forall N_{1},N_{2}\in\hyperN{\sigma}_{\geq M}:\ \left|\sum_{n=0}^{N_{1}}|a_{n}|-\sum_{n=0}^{N_{2}}|a_{n}|\right|<\diff{\rho^{q}}.
\]

\noindent The conclusion hence follows from the inequality 
\[
\left|\sum_{n=0}^{N_{1}}a_{n}-\sum_{n=0}^{N_{2}}a_{n}\right|<\left|\sum_{n=0}^{N_{1}}|a_{n}|-\sum_{n=0}^{N_{2}}|a_{n}|\right|,
\]

\noindent and from Cauchy criterion for hypersequences, i.e.~\cite[Thm.~37]{MTAG}.
\end{proof}

\subsection{Direct and limit comparison tests}

In the direct comparison test, we need to assume a relation of the
form $a_{n}\le b_{n}$ for all $n\in\N$ between general terms of
two hyperseries. If $a_{n}=[a_{n\eps}]$, $b_{n}=[b_{n\eps}]\in\rcrho$
are two representatives, then this inequality would yield
\[
\exists[z_{n\eps}]=0\in\rcrho\,\exists\eps_{0n}\,\forall\eps\le\eps_{0n}:\ a_{n\eps}\le b_{n\eps}+z_{n\eps}.
\]
The dependence of $\eps_{0n}$ from $n\in\N$ does not allow to prove,
e.g., that $\sum_{n=0}^{N}a_{n}\le\sum_{n=0}^{N}b_{n}$ (e.g.~as
in \eqref{eq:reprCountEx2}, we can consider $b_{n\eps}:=0$ and $a_{n\eps}:=0$
if $\eps<\frac{1}{n+1}$ but $a_{n\eps}:=1$ otherwise, then $a_{n}=[a_{n\eps}]=0=b_{n}$,
but $\sum_{n=0}^{N}a_{n}=N-\left[\left\lceil \frac{1}{\eps}\right\rceil \right]+2$).

Moreover, for convergence tests we also need to perform pointwise
(in $n\in\N$) operations of the form $\left(\frac{a_{n}}{b_{n}}\right)_{n}$,
$\left(\left|\frac{a_{n+k}}{a_{n}}\right|\right)_{n}$ or $\left(\left|a_{n}\right|^{1/n}\right)_{n}$.
This kind of pointwise operations can be easily considered if we restrict
us to sequences $\{a_{n}\}_{n}=[a_{n\eps}]_{\text{u}}\in\rcrhou$
and we hence assume that $\sigma\ge\rho^{*}$. Anyway, this is a particular
sufficient condition, and we can more generally state some convergence
tests using the more general space $\rcrhos$. For these reasons,
we define
\begin{defn}
\label{def:orderR_s}Let $(a_{n})_{n}$, $(b_{n})_{n}\in\rcrhos$,
then we say that $(a_{n})_{n}\le(b_{n})_{n}$ if
\begin{equation}
\forall N,M\in\hypNs:\ \sum_{n=N}^{M}a_{n}\le\sum_{n=N}^{M}b_{n}\text{ in }\rcrho.\label{eq:inequality}
\end{equation}
Note explicitly that if $M<_{L}N$ on $L\subzero I$, then $\sum_{n=N}^{M}a_{n}=_{L}\sum_{n=N}^{M}b_{n}=_{L}0$.
Therefore, \eqref{eq:inequality} is equivalent to
\[
\forall N,M\in\hypNs:\ M\ge N\ \Rightarrow\ \sum_{n=N}^{M}a_{n}\le\sum_{n=N}^{M}b_{n}\text{ in }\rcrho.
\]
\end{defn}

On the other hand, we recall that the Robinson-Colombeau ring $\rcrhou$
is already ordered by $\left\{ a_{n}\right\} _{n}=[a_{n\eps}]_{\text{u}}\le[b_{n\eps}]_{\text{u}}=\left\{ b_{n}\right\} _{n}$
if
\begin{equation}
\exists\left[z_{n\eps}\right]_{\text{u}}=0\in\rcrhou\,\forall^{0}\eps\,\forall n\in\N:\ a_{n\eps}\le b_{n\eps}+z_{n\eps}.\label{eq:inequalityRcrhou}
\end{equation}
We also recall that $\rcrho\subseteq\rcrhos$ through the embedding
$x\mapsto[(x_{\eps})_{n\eps}]$. Thereby, a relation of the form $x\le[a_{n\eps}]_{\text{u}}$
yields
\[
\forall^{0}\eps\,\forall n\in\N:\ x_{\eps}\le a_{n\eps}
\]
for some representative $x=[x_{\eps}]$. Finally, \cite[Lem.~2]{MTAG}
for the ring $\rcrhou$ implies that $\{a_{n}\}_{n}<\{b_{n}\}_{n}$
if and only if
\[
\exists q\in\R_{>0}\,\forall^{0}\eps\,\forall n\in\N:\ b_{n\eps}-a_{n\eps}\ge\diff{\rho}^{q}.
\]

\begin{thm}
\label{thm:orderedModule}We have the following properties:
\begin{enumerate}
\item \label{enu:orderedMod}$(\rcrhos,\le)$ is an ordered $\rcrho$-module.
\item \label{enu:positiveIncrease}If $(a_{n})_{n}\ge0$, then $N\in\hypNs\mapsto\sum_{n=0}^{N}a_{n}\in\rcrho$
is increasing.
\item \label{enu:orderPres}If $\sigma\ge\rho^{*}$, and $[a_{n\eps}]_{\text{\emph{u}}}\le[b_{n\eps}]_{\text{\emph{u}}}$
in $\rcrhou$, then $[a_{n\eps}]_{\text{\emph{s}}}\le[b_{n\eps}]_{\text{\emph{s}}}$
in $\rcrhos$.
\end{enumerate}
\end{thm}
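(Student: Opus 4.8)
The plan is to prove the three parts of Theorem~\ref{thm:orderedModule} more or less in the order stated, since part \ref{enu:orderedMod} supplies the structural facts that the later parts lean on. For part \ref{enu:orderedMod}, I already know from Theorem~\ref{thm:module} that $\rcrhos$ is a quotient $\rcrho$-module, so what remains is to check that the relation $\le$ of Definition~\ref{def:orderR_s} is a partial order \emph{compatible} with the module structure. The key observation is that $\le$ on $\rcrhos$ is defined entirely through the well-defined hypersum map $(M,N)\mapsto\sum_{n=N}^{M}a_n\in\rcrho$ of Theorem~\ref{thm:seriesWellDef}, and each of those hypersums is a genuine element of the \emph{ordered} ring $\rcrho$. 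Thus reflexivity, transitivity, and antisymmetry for $\le$ all reduce, quantifier-by-quantifier over $N,M\in\hypNs$, to the corresponding already-known properties of $\le$ in $\rcrho$; for antisymmetry one uses in particular that taking $M=N$ recovers the single term $a_N$, so $(a_n)_n\le(b_n)_n$ and $(b_n)_n\le(a_n)_n$ force $a_N=b_N$ for every $N\in\hypNs$, and hence (again via Theorem~\ref{thm:seriesWellDef}) equality of the two classes. Compatibility with addition and with scaling by $r\in\rcrho_{\ge0}$ follows because hypersums are additive in the summand sequence (linearity of the bracket) and $\rcrho$ is an ordered ring.

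For part \ref{enu:positiveIncrease}, I would argue directly from Lemma~\ref{lem:N+M}: if $(a_n)_n\ge 0$ and $M,N\in\hypNs$ with $M\ge N$, then
\[
\sum_{n=0}^{M}a_n-\sum_{n=0}^{N}a_n=\sum_{n=N+1}^{M}a_n\ge 0,
\]
where the inequality is exactly the hypothesis $(a_n)_n\ge(0)_n$ instantiated (via Definition~\ref{def:orderR_s}) at the pair of indices $N+1,M\in\hypNs$. A small technical point I would address is the reduction to $M\ge N$: off the set where $M\ge N$ one has $M<_L N$ on some $L\subzero I$, and there the hypersum vanishes on $L$ by the remark in Definition~\ref{def:orderR_s}, so the inequality is trivially satisfied subpointwise; gluing the two cases gives monotonicity of $N\mapsto\sum_{n=0}^{N}a_n$ in the sharp order on $\rcrho$.

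For part \ref{enu:orderPres}, the task is to transport the order from $\rcrhou$ to $\rcrhos$ under the hypothesis $\sigma\ge\rho^{*}$, and this is where the real work lies. The hypothesis $[a_{n\eps}]_{\text{u}}\le[b_{n\eps}]_{\text{u}}$ unwinds, via \eqref{eq:inequalityRcrhou}, to a \emph{uniform} (in $n$) negligible corrector: $\exists[z_{n\eps}]_{\text{u}}=0$ with $a_{n\eps}\le b_{n\eps}+z_{n\eps}$ for all $n\in\N$ and all $\eps\le\eps_0$. The plan is to fix arbitrary $N,M\in\hypNs$ with representatives $N_\eps\le M_\eps$ and estimate
\[
\sum_{n=\nint{(N)}_{\eps}}^{\nint{(M)}_{\eps}}a_{n\eps}\le\sum_{n=\nint{(N)}_{\eps}}^{\nint{(M)}_{\eps}}b_{n\eps}+\sum_{n=\nint{(N)}_{\eps}}^{\nint{(M)}_{\eps}}z_{n\eps},
\]
so everything comes down to showing that the accumulated corrector $\sum_{n}z_{n\eps}$ is $\rho$-negligible. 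The main obstacle, and the precise place where $\sigma\ge\rho^{*}$ enters, is exactly this: the number of addends $\nint{(M)}_{\eps}-\nint{(N)}_{\eps}+1$ is $\sigma$-moderate, bounded by some $\sigma_\eps^{-R}$, while uniform negligibility gives $|z_{n\eps}|\le\rho_\eps^{q}$ for every $q$ and $\eps$ small; the product $\sigma_\eps^{-R}\cdot\rho_\eps^{q}$ is $\rho$-negligible precisely because $\sigma\ge\rho^{*}$ lets us absorb $\sigma_\eps^{-R}\le\rho_\eps^{-R\,Q_{\sigma,\rho}}$, leaving $\rho_\eps^{q-R\,Q_{\sigma,\rho}}$, which is still negligible as $q$ ranges over $\N$. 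This is the very same bookkeeping already carried out in the proof of Lemma~\ref{lem:Rtilu-RtilsComparison}\ref{enu:emb2}, so I would invoke that computation rather than repeat it. Having shown $\sum_n z_{n\eps}\sim_\rho 0$, the hypersum inequality $\sum_{n=N}^{M}a_n\le\sum_{n=N}^{M}b_n$ holds in $\rcrho$ for all $N,M\in\hypNs$, which is exactly $[a_{n\eps}]_{\text{s}}\le[b_{n\eps}]_{\text{s}}$ by Definition~\ref{def:orderR_s}, completing the proof.
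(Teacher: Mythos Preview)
Your proposal is correct and follows essentially the same route as the paper: reduce part~\ref{enu:orderedMod} to the order on $\rcrho$ via the hypersum map, prove part~\ref{enu:positiveIncrease} from Lemma~\ref{lem:N+M}, and prove part~\ref{enu:orderPres} by summing the uniform corrector $z_{n\eps}$ and using $\sigma\ge\rho^{*}$ to absorb the $\sigma$-moderate count of summands into a $\rho$-negligible bound. Your treatment is in fact slightly more complete than the paper's, since you explicitly address compatibility of $\le$ with the module operations, which the paper's proof leaves implicit.

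One small misstep in your antisymmetry argument: once you have $\sum_{n=N}^{M}a_n=\sum_{n=N}^{M}b_n$ in $\rcrho$ for \emph{all} $N,M\in\hypNs$, you are already done; by Definition~\ref{def:hyperseries} this says exactly $\bigl(\sum_{n=\nint{(N)}_{\eps}}^{\nint{(M)}_{\eps}}(a_{n\eps}-b_{n\eps})\bigr)\sim_{\rho}0$ for all $N,M$, which is the definition \eqref{eq:negligibleHyperseries} of $(a_{n\eps})\sim_{\sigma\rho}(b_{n\eps})$. Your detour through the diagonal $M=N$ to conclude $a_N=b_N$ for every $N\in\hypNs$, and the appeal to Theorem~\ref{thm:seriesWellDef}, is neither needed nor justified: Theorem~\ref{thm:seriesWellDef} concerns well-definedness of the hypersum, not a criterion for equality of classes, and the paper explicitly warns (just after the proof of Lemma~\ref{lem:Rtilu-RtilsComparison}\ref{enu:extension}) that termwise equality need not imply equality in $\rcrhos$. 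Simply drop that detour and cite Definition~\ref{def:hyperseries} together with \eqref{eq:negligibleHyperseries}, exactly as the paper does.
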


\begin{proof}
\ref{enu:orderedMod}: The relation $\le$ on $\rcrhos$ is clearly
reflexive and transitive. If $(a_{n})_{n}\le(b_{n})_{n}\le(a_{n})_{n}$,
then for all $N$, $M\in\hypNs$ we have $\sum_{n=N}^{M}a_{n}=\sum_{n=N}^{M}b_{n}$
in $\rcrho$. From Def.~\ref{def:hyperseries} of hypersum, this
implies \eqref{eq:negligibleHyperseries}, i.e.~that $(a_{n})_{n}=(b_{n})_{n}$
in $\rcrhos$.

\ref{enu:positiveIncrease}: Let $N$, $M\in\hypNs$ with $N\le M$,
then $\sum_{n=0}^{M}a_{n}=\sum_{n=0}^{N}a_{n}+\sum_{n=N+1}^{M}a_{n}$
by Lem.~\ref{lem:N+M}, and $\sum_{n=N+1}^{M}a_{n}\ge0$ because
$(a_{n})_{n}\ge0$ in $\rcrhos$.

\ref{enu:orderPres}: Assume that the inequality in \eqref{eq:inequalityRcrhou}
holds for all $\eps\le\eps_{0}$ and for all $n\in\N$. Then, if $N=[N_{\eps}]$,
$M=[M_{\eps}]$, $N_{\eps}$, $M_{\eps}\in\N$, we have $\sum_{n=N_{\eps}}^{M_{\eps}}a_{n\eps}\le\sum_{n=N_{\eps}}^{M\eps}b_{n\eps}+\sum_{n=N_{\eps}}^{M\eps}z_{n\eps}$
for all $\eps\le\eps_{0}$. Since $\left[z_{n\eps}\right]_{\text{u}}=0\in\rcrhou$,
for each $q\in\N$, for $\eps$ small and for all $n\in\N$, we have
$|z_{n\eps}|\le\rho_{\eps}^{q}$ and hence $\left|\sum_{n=N_{\eps}}^{M_{\eps}}z_{n\eps}\right|\le\left|M_{\eps}-N_{\eps}\right|\rho_{\eps}^{q}$.
Thereby, from the assumption $\sigma\ge\rho^{*}$, it follows that
$\left(\sum_{n=N_{\eps}}^{M_{\eps}}z_{n\eps}\right)\sim_{\rho}0$,
which proves the conclusion.
\end{proof}
The direct comparison test for hyperseries can now be stated as follows.
\begin{thm}
\label{thm:directComp}Let $\sigma$ and $\rho$ be arbitrary gauges,
let $\hypersum{\rho}{\sigma}a_{n}$ and $\hypersum{\rho}{\sigma}b_{n}$
be hyperseries with $\left(a_{n}\right)_{n}$, $\left(b_{n}\right)_{n}\ge0$
and such that
\begin{equation}
\exists N\in\N:\ (a_{n+N})_{n}\le(b_{n+N})_{n}.\label{eq:rel_a_b}
\end{equation}
Then we have:
\begin{enumerate}
\item \label{enu:directCompConv}If $\hypersum{\rho}{\sigma}b_{n}$ is convergent
then so is $\hypersum{\rho}{\sigma}a_{n}$.
\item \label{enu:directCompDiv}If $\hypersum{\rho}{\sigma}a_{n}$ is divergent
to $+\infty$, then so is $\hypersum{\rho}{\sigma}b_{n}$.
\item \label{enu:directCompRcrhou}If $\sigma\ge\rho^{*}$ and $\{a_{n}\}_{n}:=[a_{n\eps}]_{\text{\emph{u}}}$,
$\{b_{n}\}_{n}=[b_{n\eps}]_{\text{\emph{u}}}\in\rcrhou$, then we
have the same conclusions if we assume that
\[
\exists N\in\N:\ \{a_{n+N}\}_{n}\le\{b_{n+N}\}_{n}\text{ in }\rcrhou.
\]
\end{enumerate}
\end{thm}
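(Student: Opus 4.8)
The plan is to reduce all three items to the Cauchy criterion for hypersequences \cite[Thm.~37]{MTAG}, combined with the order structure of Thm.~\ref{thm:orderedModule} and the telescoping identity of Lem.~\ref{lem:N+M}. Write $p:=N\in\N$ for the shift appearing in \eqref{eq:rel_a_b} (freeing the letter $N$ for hyperfinite indices), so that the hypothesis reads $(a_{n+p})_{n}\le(b_{n+p})_{n}$ in $\rcrhos$. By Def.~\ref{def:orderR_s} this means $\sum_{n=M'}^{K'}a_{n+p}\le\sum_{n=M'}^{K'}b_{n+p}$ for all $M',K'\in\hypNs$; reindexing by $m=n+p$ and restricting the lower index to be $\ge p$ (which is harmless, since the thresholds produced below may always be chosen $\ge p$) yields
\[
\forall N_{1},N_{2}\in\hypNs,\ N_{1}\ge N_{2}\ge p:\ \sum_{n=N_{2}+1}^{N_{1}}a_{n}\le\sum_{n=N_{2}+1}^{N_{1}}b_{n}.
\]

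\textbf{Proof of \ref{enu:directCompConv}.} Assume $\hypersum{\rho}{\sigma}b_{n}$ converges. By the Cauchy criterion and the telescoping of Lem.~\ref{lem:N+M}, for each $q\in\N$ there is $M\in\hypNs$, $M\ge p$, such that $\bigl|\sum_{n=N_{2}+1}^{N_{1}}b_{n}\bigr|<\diff{\rho}^{q}$ for all $N_{1}\ge N_{2}\ge M$. Since $(b_{n})_{n}\ge0$, Thm.~\ref{thm:orderedModule} gives $\sum_{n=N_{2}+1}^{N_{1}}b_{n}\ge0$, so this increment coincides with its absolute value; combined with the displayed comparison and with $(a_{n})_{n}\ge0$ this produces
\[
0\le\sum_{n=N_{2}+1}^{N_{1}}a_{n}\le\sum_{n=N_{2}+1}^{N_{1}}b_{n}<\diff{\rho}^{q}.
\]
Hence the partial hypersums of $(a_{n})_{n}$ form a Cauchy hypersequence, and $\hypersum{\rho}{\sigma}a_{n}$ converges by \cite[Thm.~37]{MTAG}.

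\textbf{Proof of \ref{enu:directCompDiv} and \ref{enu:directCompRcrhou}.} For the divergence statement I would instead compare the full partial sums: splitting off the first $p$ terms and applying the comparison with $M'=0$, $K'=N-p$ gives, for every $N\in\hypNs$ with $N\ge p$,
\[
\sum_{n=0}^{N}b_{n}\ge\sum_{n=0}^{N}a_{n}+C,\qquad C:=\sum_{n=0}^{p-1}b_{n}-\sum_{n=0}^{p-1}a_{n}\in\rcrho.
\]
If $\hyperlimarg{\rho}{\sigma}{N}\sum_{n=0}^{N}a_{n}=+\infty$, then $\sum_{n=0}^{N}a_{n}+C\to+\infty$ as well (adding a fixed element of $\rcrho$ preserves divergence to $+\infty$), so a comparison for hyperlimits diverging to $+\infty$ forces $\hyperlimarg{\rho}{\sigma}{N}\sum_{n=0}^{N}b_{n}=+\infty$. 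Finally, item \ref{enu:directCompRcrhou} is immediate from the previous two: under $\sigma\ge\rho^{*}$, Thm.~\ref{thm:orderedModule}\ref{enu:orderPres} transports $\{a_{n+p}\}_{n}\le\{b_{n+p}\}_{n}$ from $\rcrhou$ to the inequality $(a_{n+p})_{n}\le(b_{n+p})_{n}$ in $\rcrhos$, while Lem.~\ref{lem:Rtilu-RtilsComparison}\ref{enu:convRtilu} guarantees that the convergent/divergent character is the same in $\rcrhou$ and $\rcrhos$.

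\textbf{Main obstacle.} The only genuinely non-routine point is the bookkeeping of the finite shift $p$ against hyperfinite indices: one must check that $\sum_{n=N_{2}+1}^{N_{1}}a_{n}\le\sum_{n=N_{2}+1}^{N_{1}}b_{n}$ is really an instance of Def.~\ref{def:orderR_s} once $N_{2}\ge p$, and that the Cauchy thresholds can always be taken $\ge p$. The comparison for hyperlimits tending to $+\infty$ in \ref{enu:directCompDiv} is the other place needing care, since here ``$+\infty$'' is the non-Archimedean notion from Def.~\ref{def:hyperseries} rather than the classical one; but this reduces to the definition of divergent hyperlimit in a straightforward way.
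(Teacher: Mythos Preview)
Your proposal is correct and follows essentially the same route as the paper: both arguments reduce \ref{enu:directCompConv} to the Cauchy criterion by sandwiching the increments $\sum_{N_{2}+1}^{N_{1}}a_{n}$ between $0$ and $\sum_{N_{2}+1}^{N_{1}}b_{n}$ (the paper phrases this as $A_{M}\in[A_{N},A_{N}+(B-B_{N})]$ after first absorbing the shift via Cor.~\ref{cor:convAddRem}, whereas you keep the shift $p$ explicit and invoke Cauchy differences directly), and both dispatch \ref{enu:directCompDiv} by the inequality between full partial sums and \ref{enu:directCompRcrhou} via Thm.~\ref{thm:orderedModule}\ref{enu:orderPres} and Lem.~\ref{lem:Rtilu-RtilsComparison}\ref{enu:convRtilu}. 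The only cosmetic difference is your explicit bookkeeping of the shift $p$ versus the paper's reduction to $p=0$ at the outset.
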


\begin{proof}
We first note that \eqref{eq:rel_a_b} can be simply written as $(\alpha_{n})_{n}\le(\beta_{n})_{n}$,
where $\alpha_{n}:=a_{n+N}$ and $\beta_{n}:=b_{n+N}$. Thereby, Cor.~\ref{cor:convAddRem}
implies that, without loss of generality, we can assume $N=0$. To
prove \ref{enu:directCompConv}, let us consider the partial sums
$A_{N}:=\sum_{i=0}^{N}a_{i}$ and $B_{N}:=\sum_{i=0}^{N}b_{i}$, $N\in\hypNs$.
Since $\hypersum{\rho}{\sigma}b_{n}$ is convergent, we have $\exists\hyperlimarg{\rho}{\sigma}{N}B_{N}=:B\in\rcrho$.
The assumption $\left(a_{n}\right)_{n}\le\left(b_{n}\right)_{n}$
implies $A_{N}\le B_{N}$. The hypersequences $\left(A_{N}\right)_{N\in\hypNs}$,
$\left(B_{N}\right)_{N\in\hypNs}$ are increasing because $(a_{n})_{n}$,
$(b_{n})_{n}\ge0$ (Thm,~\ref{thm:orderedModule}.\ref{enu:positiveIncrease})
and hence $\left(B-B_{N}\right)_{N\in\hypNs}$ decreases to zero because
of the convergence assumption. Now, for all $N$, $M\in\hypNs$, $M\ge N$,
we have
\begin{equation}
A_{N}\leq A_{M}=\sum_{n=0}^{M}a_{n}=\sum_{n=0}^{N}a_{n}+\sum_{n=N+1}^{M}a_{n}\le A_{N}+\sum_{n=N+1}^{M}b_{n}=A_{N}+(B-B_{N}).\label{eq:interval}
\end{equation}
Thereby, given $m$, $n\ge N$, applying \eqref{eq:interval} with
$m$, $n$ instead of $M$, we get that both $A_{n}$, $A_{m}$ belong
to the interval $[A_{N},A_{N}+(B-B_{N})]$, whose length $B-B_{N}$
decreases to zero as $N\in\hypNs$ goes to infinity. This shows that
$(A_{n})_{n\in\hypNs}$ is a Cauchy hypersequence, and therefore $\hypersum{\rho}{\sigma}a_{n}$
converges.

The proof of \ref{enu:directCompDiv} follows directly from the inequality
$A_{N}\le B_{N}$ for each $N\in\hypNs$. The proof of \ref{enu:directCompRcrhou}
follows from Lem.~\ref{thm:orderedModule}.\ref{enu:orderPres} and
from Lem.~\ref{lem:Rtilu-RtilsComparison}.\ref{enu:convRtilu}.
\end{proof}
\noindent Note that the two cases \ref{enu:directCompConv} and \ref{enu:directCompDiv}
do not cover the case where the non-decreasing hypersums $N\mapsto\sum_{n=0}^{N}a_{n}$
are bounded but anyway do not converge because it does not exists
the supremum of their values (see \cite{MTAG}).
\begin{example}
\label{exa:direct}Let $x\in\rcrho_{\ge0}$ be a finite number so
that $x\le M\in\N_{>0}$ for some $M$. Assume that $x_{\eps}\le M$
for all $\eps\le\eps_{0}$. For these $\eps$ we have $\frac{n+x_{\eps}}{n^{3}}\le\frac{2}{n^{2}}$
for all $n\in\N_{\ge M}$. This shows that $\left\{ \frac{n+M+x}{(n+M)^{3}}\right\} _{n}\le\left\{ \frac{2}{(n+M)^{2}}\right\} _{n}$
and we can hence apply Thm.~\ref{thm:directComp}.
\end{example}

The limit comparison test is the next
\begin{thm}
Let $\sigma$ and $\rho$ be arbitrary gauges, let $\hypersum{\rho}{\sigma}a_{n}$
and $\hypersum{\rho}{\sigma}b_{n}$ be hyperseries, with $\left(a_{n}\right)_{n}\geq0$.
Assume that
\begin{equation}
\exists m,M\in\rcrho_{>0}\,\exists N\in\N:\ \left(mb_{n+N}\right)_{n}\le\left(a_{n+N}\right)_{n}\le\left(Mb_{n+N}\right)_{n}.\label{eq:limitTest}
\end{equation}
Then we have:
\begin{enumerate}
\item \label{enu:limComp}Either both hyperseries $\hypersum{\rho}{\sigma}a_{n}$,
$\hypersum{\rho}{\sigma}b_{n}$ converge or diverge to $+\infty$.
\item \label{enu:limCompRcrhou}If $\sigma\ge\rho^{*}$, $\left\{ a_{n}\right\} _{n}=[a_{n\eps}]_{\text{\emph{u}}}$,
$\left\{ b_{n}\right\} _{n}=[b_{n\eps}]_{\text{\emph{u}}}\in\rcrhou$,
$[b_{n\eps}]_{\text{\emph{u}}}>0$ and
\[
\exists m,M\in\rcrho_{>0}\,\exists N\in\N:\ m\le\left\{ \frac{a_{n+N}}{b_{n+N}}\right\} _{n}\le M,
\]
then the same conclusion as in \ref{enu:limComp} holds.
\end{enumerate}
\begin{proof}
As in the previous proof, without loss of generality, we can assume
$N=0$. Now, if $\hypersum{\rho}{\sigma}b_{n}$ diverges to $+\infty$,
then so does $\hypersum{\rho}{\sigma}mb_{n}$ because $m>0$. Since
$\left(mb_{n}\right)_{n}<\left(a_{n}\right)_{n}$, by the direct comparison
test also the hyperseries $\hypersum{\rho}{\sigma}a_{n}$ diverges
to $+\infty$. Likewise, if the hyperseries $\hypersum{\rho}{\sigma}b_{n}$
converges then so does $\hypersum{\rho}{\sigma}Mb_{n}$. Since $\left(a_{n}\right)_{n}<\left(Mb_{n}\right)_{n}$,
by the direct comparison test also the hyperseries $\hypersum{\rho}{\sigma}a_{n}$
converges. Property \ref{enu:limCompRcrhou} can be proved as in the
previous theorem.
\end{proof}
\end{thm}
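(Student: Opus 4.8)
The plan is to reduce everything to the direct comparison test, Thm.~\ref{thm:directComp}, mimicking the classical argument. First I would dispose of the shift: since $N\in\N$ is finite, Cor.~\ref{cor:convAddRem} lets me replace $(a_n)_n$, $(b_n)_n$ by $(a_{n+N})_n$, $(b_{n+N})_n$ without affecting either convergence character, so I may assume $N=0$ and read \eqref{eq:limitTest} as $(mb_n)_n\le(a_n)_n\le(Mb_n)_n$ in $\rcrhos$. Before invoking the comparison test I must check its hypotheses, and the only nonobvious one is $(b_n)_n\ge0$: this comes from the right inequality together with $(a_n)_n\ge0$, since $0\le(a_n)_n\le(Mb_n)_n$ and the positivity and invertibility of $M\in\rcrho_{>0}$ force $(b_n)_n\ge0$.

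For part \ref{enu:limComp}, the crucial remark is that the scalars $m,M\in\rcrho_{>0}$ pass through the hypersum: because $\rcrhos$ is an $\rcrho$-module (Thm.~\ref{thm:module}) and the hypersum is $\rcrho$-linear, one has $\hypersum{\rho}{\sigma}mb_n=m\cdot\hypersum{\rho}{\sigma}b_n$ whenever the right-hand hyperlimit exists, and multiplication by a positive invertible scalar sends a convergent hyperseries to a convergent one and a hyperseries diverging to $+\infty$ to one diverging to $+\infty$. With this I would run the implications. If $\hypersum{\rho}{\sigma}b_n$ converges, then so does $\hypersum{\rho}{\sigma}Mb_n$, and Thm.~\ref{thm:directComp}.\ref{enu:directCompConv} applied to $(a_n)_n\le(Mb_n)_n$ gives convergence of $\hypersum{\rho}{\sigma}a_n$. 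If instead $\hypersum{\rho}{\sigma}b_n$ diverges to $+\infty$, then so does $\hypersum{\rho}{\sigma}mb_n$, and Thm.~\ref{thm:directComp}.\ref{enu:directCompDiv} applied to $(mb_n)_n\le(a_n)_n$ gives divergence of $\hypersum{\rho}{\sigma}a_n$. The converse implications, deducing the behaviour of $b_n$ from that of $a_n$, follow symmetrically by dividing the same two inequalities by $m$ or $M$; hence the two hyperseries always share the same character.

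For part \ref{enu:limCompRcrhou}, the plan is to turn the ratio bounds back into the inequalities of \eqref{eq:limitTest} and then use the uniformly moderate branch of the comparison test. Since $[b_{n\eps}]_{\text{u}}>0$, the $\{b_n\}_n$ are invertible in the ordered ring $\rcrhou$, so multiplying $m\le\{a_{n+N}/b_{n+N}\}_n\le M$ by the positive factor $\{b_{n+N}\}_n$ yields $\{mb_{n+N}\}_n\le\{a_{n+N}\}_n\le\{Mb_{n+N}\}_n$ in $\rcrhou$. Under the standing assumption $\sigma\ge\rho^{*}$, Thm.~\ref{thm:orderedModule}.\ref{enu:orderPres} transfers these order relations to $\rcrhos$, while Lem.~\ref{lem:Rtilu-RtilsComparison}.\ref{enu:convRtilu} guarantees that convergence or divergence is the same in $\rcrhou$ and $\rcrhos$; I would then conclude by part \ref{enu:limComp}, or equivalently feed the $\rcrhou$ inequalities straight into Thm.~\ref{thm:directComp}.\ref{enu:directCompRcrhou}.

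The step I expect to demand the most care is not the comparison itself but the algebra around the scalars $m$ and $M$: I must make sure that divergence to $+\infty$ is genuinely preserved under multiplication by a \emph{positive invertible} element of the non-Archimedean ring $\rcrho$, so that no zero-divisor phenomenon spoils the argument, and that the $\rcrho$-linearity of the hypersum is invoked only for an already existing hyperlimit. Since $m,M\in\rcrho_{>0}$ are invertible, both concerns are harmless, and the whole statement then collapses onto Thm.~\ref{thm:directComp} and Cor.~\ref{cor:convAddRem}.
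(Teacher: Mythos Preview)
Your proposal is correct and follows essentially the same route as the paper: reduce to $N=0$ via Cor.~\ref{cor:convAddRem}, scale by the positive invertible constants $m,M$, and apply the direct comparison test Thm.~\ref{thm:directComp}. You are in fact more careful than the paper in two places: you explicitly derive $(b_n)_n\ge 0$ from $0\le(a_n)_n\le(Mb_n)_n$ before invoking Thm.~\ref{thm:directComp}, and you spell out the converse implications (from the behaviour of $\hypersum{\rho}{\sigma}a_n$ back to $\hypersum{\rho}{\sigma}b_n$) by dividing through by $m$ or $M$, whereas the paper leaves these symmetric steps implicit.
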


\noindent Note that if $M$, $a_{n}$, $b_{n}\in\R$, then the condition
$\left\{ \frac{a_{n}}{b_{n}}\right\} _{n}\le M$ is equivalent to
$\limsup_{n\to+\infty}\frac{a_{n}}{b_{n}}\le M$. Analogously, $m\le\left\{ \frac{a_{n}}{b_{n}}\right\} _{n}$
is equivalent to $\liminf_{n\to+\infty}\frac{a_{n}}{b_{n}}\ge m$.
This shows that our formulation of the limit comparison test faithfully
generalizes the classical version.

Both the root and the ratio tests are better formulated in $\rcrhou$,
because their proofs require term-by-term operations (such as e.g.~that
$|a_{n}|^{1/n}\le L$ implies $|a_{n}|\le L^{n}$), so that the order
relation defined in Def.~\eqref{def:orderR_s} seems too weak for
these aims; see also the comment below the proof of the root test.

\subsection{Root test}
\begin{thm}
Let $\rho^{*}\le\sigma\le\rho^{*}$ and let $\left\{ |a_{n}|\right\} _{n}\in\rcrhou$
(so that also $\left\{ \left|a_{n}\right|^{1/n}\right\} _{n}\in\rcrhou$).
Assume that
\begin{equation}
\exists L\in\rcrho:\ \left\{ \left|a_{n}\right|^{1/n}\right\} _{n}\le L.\label{eq:root}
\end{equation}
Then
\begin{enumerate}
\item \label{enu:rootLess1}If $L<1$, then the hyperseries $\hypersum{\rho}{\sigma}a_{n}$
converges absolutely.
\item \label{enu:rootGr1Subpt}If $J\subzero I$ and $L>_{J}1$, then $\left(\left|a_{n}\right||_{J}\right)_{n\in\hypNs}\to+\infty$
and hence the hyperseries $\hypersum{\rho}{\sigma}a_{n}|_{J}$ diverges.
\item \label{enu:rootIff}If $L\not\sbpt{=}1$, then the hyperseries $\hypersum{\rho}{\sigma}a_{n}$
converges if and only if $L<1$.
\end{enumerate}
\end{thm}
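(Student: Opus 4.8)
The plan is to reduce all three items to the behaviour of the geometric hyperseries, exactly as in the classical root test, using that the hypothesis $\rho^{*}\le\sigma\le\rho^{*}$ places us inside the ring $\rcrhou$ (where term-by-term operations are legitimate) and, since in particular $\sigma\le\rho^{*}$, makes the convergent geometric hyperseries $\hypersum{\rho}{\sigma}k^{n}=\frac{1}{1-k}$ available for every $k\in\rcrho$ with $0<k<1$ by Example~\ref{exa:GeomExpSeries}.\ref{enu:geomSigma}. Throughout I would first unravel \eqref{eq:root} via the definition \eqref{eq:inequalityRcrhou} of the order on $\rcrhou$: there is a uniformly negligible $[z_{n\eps}]_{\text{u}}=0$ with $|a_{n\eps}|^{1/n}\le L_{\eps}+z_{n\eps}$ for $\eps$ small and all $n\in\N$.

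For \ref{enu:rootLess1}, since $L<1$, the characterization of strict order in \cite[Lem.~2]{MTAG} gives $r\in\N$ with $1-L_{\eps}\ge\rho_{\eps}^{r}$ for $\eps$ small; I set $K_{\eps}:=1-\tfrac12\rho_{\eps}^{r}$, so that $K:=[K_{\eps}]$ satisfies $0<K<1$ and $L<K$. By uniform negligibility, $|z_{n\eps}|\le\tfrac12\rho_{\eps}^{r}$ for $\eps$ small uniformly in $n$, whence $|a_{n\eps}|^{1/n}\le L_{\eps}+\tfrac12\rho_{\eps}^{r}\le K_{\eps}$ and therefore $|a_{n\eps}|\le K_{\eps}^{n}$. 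This says precisely that $\{|a_{n}|\}_{n}\le\{K^{n}\}_{n}$ in $\rcrhou$, with both members $\ge 0$. Since $\{K^{n}\}_{n}\in\rcrhou$ (as $K^{n}\le1$) and $\hypersum{\rho}{\sigma}K^{n}$ converges by Example~\ref{exa:GeomExpSeries}.\ref{enu:geomSigma}, the direct comparison test Thm~\ref{thm:directComp}.\ref{enu:directCompRcrhou} (applicable because $\sigma\ge\rho^{*}$) yields convergence of $\hypersum{\rho}{\sigma}|a_{n}|$, and the absolute-convergence theorem established above gives convergence of $\hypersum{\rho}{\sigma}a_{n}$.

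For \ref{enu:rootGr1Subpt} the mechanism is the reverse one: I aim to produce a constant $c>1$ and a lower estimate $|a_{n}|\ge c^{n}$ valid on $J$, so that for infinite $n\in\hypNs$ the term $c^{n}$ is infinite — exactly the divergent geometric behaviour of the case $k\sbpt{>}1$ recorded in Example~\ref{exa:GeomExpSeries} (where $k^{N+1}\sbpt{=}+\infty$). This forces $(|a_{n}||_{J})_{n\in\hypNs}\to+\infty$, so on $J$ the general term does not tend to $0$, and Lem.~\ref{lem:NtoN+M} (convergence implies $\hyperlim{\rho}{\sigma}a_{n}=0$) then prevents $\hypersum{\rho}{\sigma}a_{n}|_{J}$ from converging. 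The main obstacle is precisely the extraction of this lower bound: \eqref{eq:root} supplies only the upper estimate $|a_{n\eps}|^{1/n}\le L_{\eps}$, so turning $L>_{J}1$ into a genuine bound $|a_{n\eps}|^{1/n}\ge c_{\eps}>1$, valid on $J$ and for the relevant hypernatural indices, is where the argument must use that the root $\{|a_{n}|^{1/n}\}_{n}$ is controlled from below on $J$. Concretely I would, on $J$, use $L_{\eps}\ge 1+\rho_{\eps}^{q}$, choose $c_{\eps}$ strictly between $1$ and $L_{\eps}$, and carry the representative-level estimate through the passage from $\N$ to $\hypNs$, restricting to the subpoint $J$ at every step.

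Finally, \ref{enu:rootIff} is a formal consequence of the two previous items together with the dichotomy on $\rcrho$ furnished by $L\not\sbpt{=}1$: this hypothesis rules out $L=1$ on every subpoint, so either $L<1$, in which case \ref{enu:rootLess1} gives convergence, or $L>_{J}1$ on some $J\subzero I$, in which case \ref{enu:rootGr1Subpt} shows that $\hypersum{\rho}{\sigma}a_{n}$ cannot converge. Hence $\hypersum{\rho}{\sigma}a_{n}$ converges if and only if $L<1$, which is the asserted equivalence.
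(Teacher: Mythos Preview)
Your treatment of \ref{enu:rootLess1} and \ref{enu:rootIff} follows the paper's line. For \ref{enu:rootLess1} you are in fact more careful than the paper, which simply writes that $\{|a_n|^{1/n}\}_n\le L$ ``entails $\{|a_n|\}_n<(L^n)_n$'': you interpose an auxiliary $K$ with $L<K<1$ so that the uniformly negligible correction is absorbed before raising to the $n$-th power, and then compare with the geometric hyperseries via Thm.~\ref{thm:directComp}.\ref{enu:directCompRcrhou}. For \ref{enu:rootIff} the paper runs the contrapositive of your dichotomy (assume convergence and $L\sbpt{>}1$, contradict via \ref{enu:rootGr1Subpt}, then invoke \cite[Lem.~5,~6]{MTAG} to pass from $L\not\sbpt{\ge}1$ to $L<1$); the content is the same.

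The genuine gap is in \ref{enu:rootGr1Subpt}, and you have located it yourself: the standing hypothesis \eqref{eq:root} supplies only the \emph{upper} estimate $|a_{n\eps}|^{1/n}\le L_\eps+z_{n\eps}$, while your plan requires a \emph{lower} estimate $|a_n|\ge c^n$ on $J$. Your proposed remedy---pick $c_\eps$ strictly between $1$ and $L_\eps$ and ``carry the representative-level estimate through''---does not close the gap, because from $|a_{n\eps}|^{1/n}\le L_\eps$ together with $c_\eps<L_\eps$ one cannot infer $|a_{n\eps}|^{1/n}\ge c_\eps$; nothing in \eqref{eq:root} bounds $|a_{n\eps}|^{1/n}$ from below. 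The paper at this point simply asserts $\{|a_n|\}_n\ge_J(L^n)_n$ ``proceeding as above'', i.e.\ by reversing the direction of the inequality obtained in \ref{enu:rootLess1}, and offers no further mechanism; so there is no step in the paper's argument that you are overlooking. As your proposal stands, \ref{enu:rootGr1Subpt} is not proved.
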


\begin{proof}
\ref{enu:rootLess1}: Our assumption $\left\{ |a_{n}|^{1/n}\right\} _{n}\le L$
entails $\left\{ |a_{n}|\right\} _{n}<\left(L^{n}\right)_{n}$. Since
$\parthypersumarg{\rho}{\sigma}{n}{\ge0}L^{n}$ is convergent, because
$0\le L<1$ and $\sigma\le\rho^{*}$ (see Example \ref{exa:GeomExpSeries}.\ref{enu:geomSeries}),
by the direct comparison test, the series $\hypersum{\rho}{\sigma}|a_{n}|$
is also convergent.

\ref{enu:rootGr1Subpt}: Now, assume that $L_{\eps}>1$ for $\eps\in J\subzero I$
and let us work directly in the ring $\rcrho|_{J}$. Proceeding as
above, we have $\left\{ |a_{n}|\right\} _{n}\ge_{J}\left(L^{n}\right)_{n}$,
and thereby the last claim follows because of $L>_{J}1$.

\ref{enu:rootIff}: Assume that $L\not\sbpt{=}1$, that $\hypersum{\rho}{\sigma}a_{n}$
converges but $L\sbpt{>}1$, then \ref{enu:rootGr1Subpt} would yield
that $\hypersum{\rho}{\sigma}a_{n}|_{J}$ diverges for some $J\subzero I$
and this contradicts the convergence assumption. Therefore, we have
$L\not\sbpt{=}1$ and $L\not\sbpt{>}1$, and hence \cite[Lem.~6.(v)]{MTAG}
gives $L\not\sbpt{\ge}1$. Thereby, \cite[Lem.~5.(ii)]{MTAG} finally
implies $L<1$.
\end{proof}
\noindent Actually, we can also simply reformulate the root test in
$\rcrhos$ by trivially asking that $\left(\left|a_{n}\right|\right)_{n}\le\left(L^{n}\right)_{n}$,
but that would simply recall a non-meaningful consequence of the direct
comparison test Thm.~\ref{thm:directComp}.

\subsection{Ratio test}

Proceeding as in the previous proof, i.e.~by generalizing the classical
proof for series of real numbers, we also have the following
\begin{thm}
Let $\rho^{*}\le\sigma\le\rho^{*}$ and let $\left\{ a_{n}\right\} _{n}\in\rcrhou$,
with $\left\{ |a_{n}|\right\} _{n}>0$. Assume that for some $k\in\N$
we have
\[
\exists L\in\rcrho:\ \left\{ \frac{a_{n+k}}{a_{n}}\right\} _{n}\le L.
\]
Then
\begin{enumerate}
\item If $L<1$, the hyperseries $\hypersum{\rho}{\sigma}a_{n}$ converges.
\item If $J\subzero I$, $L>_{J}1$, then $\left(\left|a_{n}\right||_{J}\right)_{n\in\hypNs}\to+\infty$
and hence the hyperseries $\hypersum{\rho}{\sigma}a_{n}|_{J}$ diverges.
\item If $L\not\sbpt{=}1$, then the hyperseries $\hypersum{\rho}{\sigma}a_{n}$
converges if and only if $L<1$.
\end{enumerate}
\end{thm}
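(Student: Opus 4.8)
The plan is to follow the root-test proof, reducing every assertion to a comparison with a geometric hyperseries. Convergence of $\parthypersumarg{\rho}{\sigma}{n}{\ge0}\ell^{n}=\frac{1}{1-\ell}$ for $0\le\ell<1$ is granted by Example~\ref{exa:GeomExpSeries}.\ref{enu:geomSigma}, which uses $\sigma\le\rho^{*}$, while the passage from $\rcrhou$ to $\rcrhos$ and the $\rcrhou$-version of the direct comparison test Thm.~\ref{thm:directComp}.\ref{enu:directCompRcrhou} use $\sigma\ge\rho^{*}$; both halves of $\rho^{*}\le\sigma\le\rho^{*}$ are therefore exploited. The one genuinely new step compared with the root test is turning the $k$-step ratio bound into an ordinary geometric majorization $\{|a_{n}|\}_{n}\le\{C\ell^{n}\}_{n}$ in $\rcrhou$, with $\ell:=L^{1/k}$ and a constant $C\in\rcrho_{>0}$; everything downstream is then identical to the root test.

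For part (i), I would first absorb the negligible slack produced by \eqref{eq:inequalityRcrhou}. Since $L<1$ strictly, \cite[Lem.~2]{MTAG} supplies $q\in\N$ with $L_{\eps}\le1-\rho_{\eps}^{q}$ for $\eps$ small; writing the hypothesis as $\left|\frac{a_{n+k,\eps}}{a_{n,\eps}}\right|\le L_{\eps}+z_{n\eps}$ with $[z_{n\eps}]_{\text{u}}=0$ (the ratio test being, as usual, a statement on the moduli $\left|a_{n+k}/a_{n}\right|$), the uniform negligibility $|z_{n\eps}|\le\rho_{\eps}^{q+1}$ lets me replace the right-hand side by the single bound $L'_{\eps}:=1-\tfrac12\rho_{\eps}^{q}<1$, now independent of $n$, so that $|a_{n+k,\eps}|\le L'_{\eps}\,|a_{n,\eps}|$ for $\eps$ small and all $n\in\N$. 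Iterating this inequality along each of the $k$ residue classes modulo $k$ and majorizing the finitely many initial terms $|a_{r,\eps}|$ ($0\le r<k$) by the uniform moderateness bound $\rho_{\eps}^{-Q}$ of $\{a_{n}\}_{n}\in\rcrhou$, I obtain $|a_{n,\eps}|\le C_{\eps}\ell_{\eps}^{n}$ with $\ell_{\eps}:=(L'_{\eps})^{1/k}$ and $C_{\eps}:=\rho_{\eps}^{-Q}\ell_{\eps}^{-(k-1)}$, where $0<\ell<1$ in $\rcrho$. Hence $\{|a_{n}|\}_{n}\le\{C\ell^{n}\}_{n}$ in $\rcrhou$, and since $\parthypersumarg{\rho}{\sigma}{n}{\ge0}C\ell^{n}=\frac{C}{1-\ell}$ converges, Thm.~\ref{thm:directComp}.\ref{enu:directCompRcrhou} gives convergence of $\hypersum{\rho}{\sigma}|a_{n}|$, i.e.\ absolute convergence of $\hypersum{\rho}{\sigma}a_{n}$.

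For part (ii), I would restrict everything to the subpoint $J\subzero I$ and work in $\rcrho|_{J}$ exactly as in the root test: there $L>_{J}1$, and the same iteration run in the reverse direction yields a lower geometric estimate $\{|a_{n}|\}_{n}\ge_{J}\{c\,\ell^{n}\}_{n}$ for some $c\in\rcrho_{>0}$ with $\ell=L^{1/k}>_{J}1$, whence $\left(|a_{n}||_{J}\right)_{n\in\hypNs}\to+\infty$ and $\hypersum{\rho}{\sigma}a_{n}|_{J}$ diverges because its general term does not tend to $0$ (Lem.~\ref{lem:NtoN+M}). Part (iii) is then purely order-theoretic and verbatim the root-test argument: assuming convergence together with $L\sbpt{>}1$ would force divergence on some subpoint by (ii), a contradiction, so $L\not\sbpt{>}1$; combined with $L\not\sbpt{=}1$, \cite[Lem.~6.(v)]{MTAG} gives $L\not\sbpt{\ge}1$ and \cite[Lem.~5.(ii)]{MTAG} then yields $L<1$, so that the hyperseries converges if and only if $L<1$ by (i).

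The main obstacle is the uniform-in-$n$ control in the first step: the error term $z_{n\eps}$ coming from the $\rcrhou$-order must be absorbed into a single constant $\ell<1$ simultaneously for all $n$, and the $k$-fold iteration of the ratio must be carried out at the level of representatives, uniformly in $n$, before passing to equivalence classes. This is exactly why the test is stated in $\rcrhou$ (with $\rho^{*}\le\sigma\le\rho^{*}$) rather than in the weaker-ordered module $\rcrhos$, in which the term-by-term bound $|a_{n+k}|\le L|a_{n}|$ cannot be iterated. A minor side check is that the roots $L^{1/k}$ and $(L'_{\eps})^{1/k}$ are well-defined nonnegative generalized numbers and that $L<1$ forces $L^{1/k}<1$, which follows from $L\ge0$ and the monotonicity of $t\mapsto t^{1/k}$.
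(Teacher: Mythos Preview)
Your approach is exactly what the paper has in mind: its proof consists of the single sentence ``Proceeding as in the previous proof, i.e.\ by generalizing the classical proof for series of real numbers'', and your reduction of part (i) to a geometric majorant via the $k$-step iteration $|a_{n+k,\eps}|\le L'_{\eps}\,|a_{n,\eps}|$, the absorption of the negligible slack $z_{n\eps}$ into a single uniform bound $L'<1$, and the appeal to the direct comparison test in $\rcrhou$ (Thm.~\ref{thm:directComp}.\ref{enu:directCompRcrhou}) together with Example~\ref{exa:GeomExpSeries}.\ref{enu:geomSigma}, is precisely that generalization. Parts (i) and (iii) are fine and match the root-test template.

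There is, however, a genuine gap in your argument for (ii), and it is the very same gap already present in the paper's root-test proof on which you are modelling it. The hypothesis is only a \emph{one-sided upper bound} $\{a_{n+k}/a_{n}\}_{n}\le L$; from $L>_{J}1$ you cannot ``run the iteration in the reverse direction'' to obtain a lower geometric estimate $\{|a_{n}|\}_{n}\ge_{J}\{c\,\ell^{n}\}_{n}$, because an upper bound on the ratio by a number exceeding $1$ says nothing about growth. For instance $a_{n}:=2^{-n}$ satisfies $a_{n+1}/a_{n}=\tfrac12\le L$ for any $L\ge\tfrac12$, in particular for $L=2>1$, yet $|a_{n}|\to0$. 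For (ii) to go through one needs either a lower bound $\{|a_{n+k}/a_{n}|\}_{n}\ge L$ on the subpoint, or $L$ to be a genuine two-sided hyperlimit of the ratios; as the theorem is literally stated, your step ``the same iteration run in the reverse direction yields a lower geometric estimate'' is not justified---but neither is the corresponding line in the paper's root-test proof, so you are faithfully reproducing the intended argument rather than introducing a new error.
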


As in the classical case, the convergent $p$-hyperseries $\hypersum{\rho}{\sigma}\frac{1}{n^{2}}$
and the divergent hyperseries $\hypersum{\rho}{\sigma}1^{n}$ shows
that the ratio and the root tests fails if $L=1$.
\begin{example}
If $x\in\rcrho$ is a finite invertible number, then using the ratio
test and proceeding as in Example \ref{exa:direct}, we can prove
that $\hypersum{\rho}{\sigma}\frac{x^{n}}{n!}$ converges if $\sigma\ge\rho^{*}$.
\end{example}

\subsection{Alternating series test}

Also in the proof of this test we need a term-by-term comparison (see
\eqref{eq:monRepProof} below).
\begin{thm}
\label{thm:altTest}Let $\sigma\ge\rho^{*}$ and let $\left\{ a_{n}\right\} _{n}\in\rcrhou$.
Assume that we have
\begin{align}
 & \left\{ a_{n+1}\right\} _{n}\le\left\{ a_{n}\right\} _{n}\label{eq:monotRepr}\\
 & \hyperlim{\rho}{\sigma}a_{n}=0.\label{eq:zeroLim}
\end{align}
Then the alternating hyperseries $\hypersum{\rho}{\sigma}(-1)^{n}a_{n}$
converges.
\end{thm}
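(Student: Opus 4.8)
The plan is to deduce convergence of $\hypersum{\rho}{\sigma}(-1)^{n}a_{n}$ from the Cauchy criterion for hypersequences \cite[Thm.~37]{MTAG} applied to the partial sums $S_{N}:=\sum_{n=0}^{N}(-1)^{n}a_{n}$, $N\in\hypNs$ (these are legitimate, since $\left\{ (-1)^{n}a_{n}\right\} _{n}\in\rcrhou$ and $\sigma\ge\rho^{*}$, so by Lem.~\ref{lem:Rtilu-RtilsComparison} we may equally work in $\rcrhos$). Fixing $q\in\N$, I must produce $M\in\hypNs$ so that $\left|S_{N_{2}}-S_{N_{1}}\right|<\diff{\rho}^{q}$ whenever $N_{1}\le N_{2}$ in $\hypNs_{\ge M}$; since $S_{N_{2}}-S_{N_{1}}=\sum_{n=N_{1}+1}^{N_{2}}(-1)^{n}a_{n}$, everything reduces to the classical estimate of an alternating tail by its first term $a_{N_{1}+1}$, now carried out uniformly in $\eps$.

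The crucial preliminary step is to replace the given representative by one that is genuinely monotone \emph{$\eps$-wise}. Starting from a representative $(a_{n\eps})$ of $\left\{ a_{n}\right\} _{n}$, the order characterisation \eqref{eq:inequalityRcrhou} applied to \eqref{eq:monotRepr} yields a uniformly negligible net $[z_{n\eps}]_{\text{u}}=0$ with $a_{n+1,\eps}\le a_{n\eps}+z_{n\eps}$ for $\eps\le\eps_{0}$ and all $n$, while \eqref{eq:zeroLim} combined with \eqref{eq:monotRepr} gives $\left\{ a_{n}\right\} _{n}\ge0$. I would then set
\begin{equation}
\bar{a}_{n\eps}:=\max\Bigl(0,\ \min_{m\le n}a_{m\eps}\Bigr),\label{eq:monRepProof}
\end{equation}
which is nonnegative and non-increasing in $n$ for $\eps\le\eps_{0}$, and satisfies $0\le a_{n\eps}-\bar{a}_{n\eps}\le n\,\rho_{\eps}^{q'}$ for every $q'$ (for $\eps$ small), because telescoping $a_{n+1,\eps}-a_{n\eps}\le z_{n\eps}$ bounds each gap by a negligible amount. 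The key point is that $(\bar{a}_{n\eps})$ represents the \emph{same} element of $\rcrhos$ as $(a_{n\eps})$ --- \emph{not} necessarily the same element of $\rcrhou$: for fixed $N,M\in\hypNs$ with $\nint{(M)}_{\eps}\le\sigma_{\eps}^{-R}$ one has $\sum_{n=\nint{(N)}_{\eps}}^{\nint{(M)}_{\eps}}(a_{n\eps}-\bar{a}_{n\eps})\le\sigma_{\eps}^{-2R}\rho_{\eps}^{q'}$, and since $R$ is now \emph{fixed}, the relation $\sigma\ge\rho^{*}$ lets me choose $q'$ large enough (namely $q'\ge q''+2RQ_{\sigma,\rho}$) to make this $\le\rho_{\eps}^{q''}$ for any prescribed $q''$; hence the correction is $\sim_{\rho}0$ and \eqref{eq:negligibleHyperseries} holds. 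In particular, by Lem.~\ref{lem:Rtilu-RtilsComparison}.\ref{enu:extension} the extensions agree, $\bar{a}_{N}=a_{N}$ for all $N\in\hypNs$, so passing to $\bar a$ changes neither the hypersums $S_{N}$ nor the limit hypothesis \eqref{eq:zeroLim}.

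With the monotone representative in hand the estimate becomes exact and error-free: for each $\eps$ and $N_{1\eps}\le N_{2\eps}$ the classical pairing of consecutive terms of $\sum_{n=N_{1\eps}+1}^{N_{2\eps}}(-1)^{n}\bar{a}_{n\eps}$ gives $\bigl|\sum_{n=N_{1\eps}+1}^{N_{2\eps}}(-1)^{n}\bar{a}_{n\eps}\bigr|\le\bar{a}_{N_{1\eps}+1,\eps}$. Reading this in $\rcrho$ and using $\bar a_{N_{1}+1}=a_{N_{1}+1}$ produces the clean inequality $\left|S_{N_{2}}-S_{N_{1}}\right|\le a_{N_{1}+1}$. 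Finally I invoke \eqref{eq:zeroLim}: given $q$, there is $M\in\hypNs$ with $\left|a_{N+1}\right|<\diff{\rho}^{q}$ for all $N\in\hypNs_{\ge M}$, and combining the two displays yields the Cauchy estimate $\left|S_{N_{2}}-S_{N_{1}}\right|<\diff{\rho}^{q}$, whence $(S_{N})_{N}$ converges by \cite[Thm.~37]{MTAG}.

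I expect the main obstacle to be exactly the construction \eqref{eq:monRepProof}: a globally monotone representative equal to $(a_{n\eps})$ in $\rcrhou$ need not exist, since monotonisation introduces an error growing like $n$ and hence \emph{not} uniformly negligible. The decisive observation --- and the sole place where the hypothesis $\sigma\ge\rho^{*}$ is really used --- is that $\rcrhos$-equivalence only tests hyperfinite ranges $n\le\sigma_{\eps}^{-R}$ with $R$ fixed per pair $(N,M)$, so the accumulated monotonisation error stays $\rho$-negligible. Once this is settled, the remainder is a faithful transcription of the real-variable alternating series test.
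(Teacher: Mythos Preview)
Your overall strategy coincides with the paper's: establish the Cauchy criterion for the partial sums by the classical alternating-tail estimate, carried out $\eps$-wise on a pointwise-monotone representative. The paper simply writes ``by \eqref{eq:monotRepr}, assume $a_{n+1,\eps}\le a_{n\eps}$ for all $n$ and $\eps\le\eps_0$'' and proceeds directly to the pairing argument; you go further and actually \emph{construct} such a representative, correctly noting that monotonisation need not preserve the $\rcrhou$-class but does preserve the $\rcrhos$-class thanks to $\sigma\ge\rho^{*}$. This extra care is a genuine refinement of the paper's argument rather than a different route.

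One small imprecision in your construction: the assertion ``$\{a_{n}\}_{n}\ge0$ in $\rcrhou$'' does not obviously follow from \eqref{eq:monotRepr} and \eqref{eq:zeroLim}, and accordingly your bound $0\le a_{n\eps}-\bar a_{n\eps}\le n\rho_\eps^{q'}$ can fail on the left when $a_{n\eps}<0$ (then $\bar a_{n\eps}=0$ and the difference is negative). What \emph{does} follow is $a_{N}\ge0$ in $\rcrho$ for every $N\in\hypNs$ (since $a_{N}\ge a_{M}$ for $M\ge N$ by Thm.~\ref{thm:orderedModule}.\ref{enu:orderPres} and Def.~\ref{def:orderR_s}, and $a_{M}\to0$). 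Applying this with $N^{*}_{\eps}:=\arg\min_{n\le\nint{(M)}_{\eps}}a_{n\eps}$, which lies in $\hypNs$, gives $\min_{n\le\nint{(M)}_{\eps}}a_{n\eps}\ge-\rho_{\eps}^{q'}$ for $\eps$ small; this controls the negative case and recovers the estimate $\sum_{n=\nint{(N)}_{\eps}}^{\nint{(M)}_{\eps}}|a_{n\eps}-\bar a_{n\eps}|\le C\,\sigma_{\eps}^{-2R}\rho_{\eps}^{q'}$ you need. With this patch your argument is complete.
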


\begin{proof}
By \eqref{eq:monotRepr}, assume that
\begin{equation}
a_{n+1,\eps}\le a_{n\eps}\quad\forall n\in\N\,\forall\eps\le\eps_{0}\label{eq:monRepProof}
\end{equation}
 holds for all $n\in\N$ and all $\eps\le\eps_{0}$. Take $N=[N_{\eps}]$,
$M=[M_{\eps}]\in\hypNs$, where for $\eps\le\eps_{0}$ we have $N_{\eps}$,
$M_{\eps}\in\N$. If $M_{\eps}$ is odd and $M_{\eps}\le N_{\eps}$,
we can estimate the difference $S_{N_{\eps}}-S_{M_{\eps}}$ as:
\begin{equation}
\begin{aligned}S_{N_{\eps}}-S_{M_{\eps}} & =\sum_{n=0}^{N_{\eps}}(-1)^{n}a_{n}-\sum_{n=0}^{M_{\eps}}(-1)^{n}a_{k}=\sum_{n=M_{\eps}+1}^{N_{\eps}}(-1)^{n}a_{n}=\\
 & =a_{M_{\eps}+1}-a_{M_{\eps}+2}+a_{M_{\eps}+3}+\cdots+a_{N_{\eps}}=\\
 & ={\displaystyle a_{M_{\eps}+1}-(a_{M_{\eps}+2}-a_{M_{\eps}+3})-(a_{M_{\eps}+4}-a_{M_{\eps}+5})-\cdots-a_{N_{\eps}}\leq}\\
 & \le a_{M_{\eps}+1}\leq a_{M_{\eps}},
\end{aligned}
\label{eq:CauchyAlt}
\end{equation}
where we used \eqref{eq:monRepProof}. If $M_{\eps}$ is even and
$M_{\eps}\le N_{\eps}$, a similar argument shows that $S_{N_{\eps}}-S_{M_{\eps}}\ge-a_{M_{\eps}}$.
If $M_{\eps}>N_{\eps}$, it suffices to revert the role of $M_{\eps}$
and $N_{\eps}$ in \eqref{eq:CauchyAlt}. This proves that $\min(-a_{M},-a_{N})\le S_{N}-S_{M}\leq\max(a_{M},a_{N})$.
The final claim now follows from \eqref{eq:zeroLim} and Cauchy criterion
\cite[Thm.~44]{MTAG}.
\end{proof}
\begin{example}
Using the previous Thm.~\ref{thm:altTest}, we can prove, e.g., that
the hyperseries $\hypersum{\rho}{\sigma}\frac{(-1)^{n}}{n}$ is convergent.
\end{example}

\subsection{Integral test}

The possibility to prove the integral test for hyperseries is constrained
by the existence of a notion of generalized function that can be defined
on an unbounded interval, e.g.~of the form $[0,+\infty)\subseteq\rcrho$.
This is not possible for an arbitrary Colombeau generalized function,
which are defined only on finite (i.e.~compactly supported) points,
i.e.~on domains of the form $\widetilde{\Omega}_{c}$ (see e.g.~\cite{GKOS}).
Moreover, we would also need a notion of improper integral extended
on $[0,+\infty)$. This notion clearly needs to have good relations
with the notion of hyperlimit and with the definite integral over
the interval $[0,N]$, where $N\in\hypNr$ (see Def.~\ref{def:imprIntegral}
below). This further underscores drawbacks of Colombeau's theory,
where this notion of integral (if $N$ is an infinite number) is not
defined, see e.g.~\cite{AFJO}.

The theory of generalized smooth functions overcomes these difficulties,
see e.g.~\cite{GKV,TI}. Here, we only recall the equivalent definition.
In the following, $\sigma$, $\rho$ always denote arbitrary gauges.
\begin{defn}
Let $X\subseteq\RC{\sigma}^{n}$ and $Y\subseteq\RC{\sigma}^{d}$.
We say that $f:X\longrightarrow Y$ is a $\rho$-moderate \emph{generalized
smooth function}\textcolor{red}{{} }(GSF), and we write $f\in\gsfud{\rho}{\sigma}(X,Y)$,
if
\begin{enumerate}
\item $f:X\ra Y$ is a set-theoretical function.
\item There exists a net $(f_{\eps})\in\Coo(\R^{n},\R^{d})^{(0,1]}$ such
that for all $[x_{\eps}]\in X$:
\begin{enumerate}[label=(\alph*)]
\item $f(x)=[f_{\eps}(x_{\eps})]$
\item $\forall\alpha\in\N^{n}:\ (\partial^{\alpha}f_{\eps}(x_{\eps}))\text{ is }\rho-\text{moderate}$.
\end{enumerate}
\end{enumerate}
For generalized smooth functions lots of results hold: closure with
respect to composition, embedding of Schwartz's distributions, differential
calculus, one-di\-men\-sio\-nal integral calculus using primitives,
classical theorems (intermediate value, mean value, Taylor, extreme
value, inverse and implicit function), multidimensional integration,
Banach fixed point theorem, a Picard-Lindelöf theorem for both ODE
and PDE, several results of calculus of variations, etc.

In particular, we have the following
\end{defn}

\begin{thm}
\label{thm:existenceUniquenessPrimitives}Let $a$, $b$, $c\in\rcrho$,
with $a<b$ and $c\in[a,b]\subseteq U$. Let $f\in\gsfud{\rho}{\sigma}([a,b],\rcrho)$
be a generalized smooth function. Then, there exists one and only
one generalized smooth function $F\in\gsfud{\rho}{\sigma}([a,b],\rcrho)$
such that $F(c)=0$ and $F'(x)=f(x)$ for all $x\in[a,b]$. Moreover,
if $f$ is defined by the net $f_{\eps}\in\Coo(\R,\R)$ and $c=[c_{\eps}]$,
then
\begin{equation}
F(x)=\left[\int_{c_{\eps}}^{x_{\eps}}f_{\eps}(s)\,\diff{s}\right]\label{eq:intEps}
\end{equation}
for all $x=[x_{\eps}]\in[a,b]$.
\end{thm}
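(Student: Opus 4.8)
The plan is to construct $F$ explicitly by the $\eps$-wise primitive and then obtain uniqueness from the mean value theorem for generalized smooth functions. I set $F_\eps(x):=\int_{c_\eps}^{x}f_\eps(s)\,\diff{s}$ for $x\in\R$, so that $F_\eps\in\Coo(\R,\R)$ and $F_\eps'=f_\eps$ by the classical fundamental theorem of calculus, and define $F(x):=[F_\eps(x_\eps)]$ for $x=[x_\eps]\in[a,b]$. The bulk of the work is to show that the net $(F_\eps)$ defines a GSF $F\in\gsfud{\rho}{\sigma}([a,b],\rcrho)$, i.e.~that $F$ is a well-defined set-theoretic map with $\rho$-moderate derivatives of every order.

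The key lemma, and the main obstacle, is the $\rho$-moderateness of $K_\eps:=\sup_{s\in[a_\eps,b_\eps]}|f_\eps(s)|$. Since each $f_\eps$ is continuous, the extreme value theorem gives $s_\eps\in[a_\eps,b_\eps]$ with $K_\eps=|f_\eps(s_\eps)|$; as $a_\eps\le s_\eps\le b_\eps$ and $a=[a_\eps]$, $b=[b_\eps]$ are $\rho$-moderate, the net $(s_\eps)$ is $\rho$-moderate, hence $[s_\eps]\in[a,b]$ and $K_\eps=|f_\eps(s_\eps)|$ is $\rho$-moderate by the defining property of the GSF $f$. Choosing for each $x\in[a,b]$ a representative with $a_\eps\le x_\eps\le b_\eps$ (which only alters $x_\eps$, and analogously $c_\eps$, by a negligible amount), the estimate $|F_\eps(x_\eps)|\le|x_\eps-c_\eps|\cdot K_\eps\le|b_\eps-a_\eps|\cdot K_\eps$ shows that $(F_\eps(x_\eps))$ is $\rho$-moderate (the zeroth derivative), while $|F_\eps(x_\eps)-F_\eps(x'_\eps)|\le|x_\eps-x'_\eps|\cdot K_\eps$ shows that $[F_\eps(x_\eps)]$ is independent of the chosen representatives of $x$ and of $c$; hence $F$ is a well-defined map $[a,b]\ra\rcrho$.

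For the higher-order moderateness, $\partial^{k}F_\eps=\partial^{k-1}f_\eps$ for $k\ge1$ is $\rho$-moderate on moderate points directly from $f\in\gsfud{\rho}{\sigma}([a,b],\rcrho)$; together with the case $k=0$ above this yields $F\in\gsfud{\rho}{\sigma}([a,b],\rcrho)$ together with the stated formula \eqref{eq:intEps}. The required properties are then immediate: $F(c)=[\int_{c_\eps}^{c_\eps}f_\eps(s)\,\diff{s}]=0$, and since derivatives of GSF are computed $\eps$-wise, $F'(x)=[F_\eps'(x_\eps)]=[f_\eps(x_\eps)]=f(x)$ for all $x\in[a,b]$.

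Finally, for uniqueness let $G\in\gsfud{\rho}{\sigma}([a,b],\rcrho)$ also satisfy $G(c)=0$ and $G'=f$, and put $H:=G-F$, which is a GSF with $H(c)=0$ and $H'\equiv0$ on $[a,b]$. Applying the mean value theorem for generalized smooth functions (among the classical theorems available in the theory), for each $x\in[a,b]$ there is $\xi$ between $c$ and $x$ with $H(x)=H(x)-H(c)=H'(\xi)\cdot(x-c)=0$; thus $G=F$. This argument also shows that $F$ does not depend on the particular defining net $(f_\eps)$ chosen for $f$.
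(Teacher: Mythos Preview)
The paper does not supply a proof of this theorem; it is stated as one of the known results about generalized smooth functions, with a pointer to the references \cite{GKV,TI} mentioned immediately before. There is therefore no argument in the paper to compare your proposal against.

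Your proof is the standard one from the GSF literature and is correct in outline. The one step that deserves a word of care is the passage ``choosing for each $x\in[a,b]$ a representative with $a_\eps\le x_\eps\le b_\eps$'': this is legitimate (from $a\le x$ and $x\le b$ in $\rcrho$ one obtains such representatives after a $\rho$-negligible correction), but it should be stated explicitly because your bound $K_\eps=\sup_{[a_\eps,b_\eps]}|f_\eps|$ only controls $f_\eps$ on $[a_\eps,b_\eps]$, and the same adjustment is needed for $c_\eps$. With that in place, the extreme-value argument producing $[s_\eps]\in[a,b]$ to witness $\rho$-moderateness of $K_\eps$, the Lipschitz estimate for independence of representatives, the identification $\partial^{k}F_\eps=\partial^{k-1}f_\eps$ for $k\ge1$, and uniqueness via the mean value theorem for GSF are exactly the ingredients used in \cite{GKV,TI}.
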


\noindent We can hence define $\int_{c}^{x}f(t)\,\diff{t}:=F(x)$
for all $x\in[a,b]$ (note explicitly that $a$, $b$ can also be
infinite generalized numbers). For this notion of integral we have
all the usual elementary property, monotonicity and integration by
substitution included.

For the integral test for hyperseries, we finally need the following
\begin{defn}
\label{def:imprIntegral}Let $f\in\gsfud{\rho}{\sigma}([0,+\infty),\rcrho)$,
then we say
\begin{align*}
\exists\int_{0}^{+\infty}f(x)\,\diff{x} & \DIff\exists\,\hyperlimarg{\rho}{\sigma}{N}\int_{0}^{N}f(x)\,\diff{x}=:\int_{0}^{+\infty}f(x)\,\diff{x}\in\rcrho\\
\int_{0}^{+\infty}f(x)\,\diff{x}=\pm\infty & \DIff\hyperlimarg{\rho}{\sigma}{N}\int_{0}^{N}f(x)\,\diff{x}=\pm\infty.
\end{align*}
\end{defn}

\begin{thm}
\label{thm:intTest}Let $f=[f_{\eps}(-)]\in\gsfud{\rho}{\sigma}([0,+\infty),\rcrho)$
be a GSF such that
\begin{align}
\forall^{0}\eps\,\forall n & \in\N\,\forall x\in[n,n+1)_{\R}:\ f_{\eps}(x)\le f_{\eps}(n)\label{eq:decr1}\\
\forall^{0}\eps\,\forall n & \in\N_{>0}\,\forall x\in[n-1,n)_{\R}:\ f_{\eps}(n)\le f_{\eps}(x)\label{eq:decr2}
\end{align}
then $\left(f(n)\right)_{n}=\left[f_{\eps}(n)\right]_{\text{\emph{s}}}\in\rcrhos$
and
\begin{enumerate}
\item The hyperseries $\hypersum{\rho}{\sigma}f(n)$ converges if $\exists\int_{0}^{+\infty}f(x)\,\diff{x}$
and $\hyperlimarg{\rho}{\sigma}{n}f(n)=0$.
\item The hyperseries $\hypersum{\rho}{\sigma}f(n)$ diverges to $+\infty$
if $\int_{0}^{+\infty}f(x)\,\diff{x}=+\infty$.
\end{enumerate}
\end{thm}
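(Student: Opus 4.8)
The plan is to run the classical integral comparison $\eps$-wise and then lift the resulting inequalities to $\rcrho$ and $\rcrhos$. First I would fix representatives $f_\eps\in\Coo(\R,\R)$ of $f$ and an $\eps_0$ below which both \eqref{eq:decr1} and \eqref{eq:decr2} hold. Integrating \eqref{eq:decr1} over $[n,n+1]$ and \eqref{eq:decr2} over $[n-1,n]$ gives, for every $\eps\le\eps_0$ and every $n\in\N_{\ge1}$, the pointwise bounds $\int_n^{n+1}f_\eps(x)\,\diff{x}\le f_\eps(n)\le\int_{n-1}^n f_\eps(x)\,\diff{x}$. Telescoping over $n$ then yields, for all $N_\eps\in\N$, the fundamental sandwich
\[
\int_1^{N_\eps+1}f_\eps(x)\,\diff{x}\ \le\ \sum_{n=1}^{N_\eps}f_\eps(n)\ \le\ \int_0^{N_\eps}f_\eps(x)\,\diff{x}.
\]

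Next I would establish that $(f(n))_n=[f_\eps(n)]_{\text{s}}\in\rcrhos$. By Thm~\ref{thm:existenceUniquenessPrimitives} the primitive $F(x):=\int_0^x f(t)\,\diff{t}=\left[\int_0^{x_\eps}f_\eps(s)\,\diff{s}\right]$ is a GSF in $\gsfud{\rho}{\sigma}([0,+\infty),\rcrho)$, so its values at the $\RC{\sigma}$-points $N,N+1\in\hyperN{\sigma}$ are $\rho$-moderate. For any $N\in\hyperN{\sigma}$ the net $\sum_{n=0}^{\nint{(N)}_\eps}f_\eps(n)=f_\eps(0)+\sum_{n=1}^{\nint{(N)}_\eps}f_\eps(n)$ is, by the sandwich, squeezed between the two $\rho$-moderate nets $f_\eps(0)+\int_1^{\nint{(N)}_\eps+1}f_\eps$ and $f_\eps(0)+\int_0^{\nint{(N)}_\eps}f_\eps$; being trapped between $\rho$-moderate nets it is itself $\rho$-moderate, which is exactly the defining condition of $(\R^{\N\times I})_{\sigma\rho}$. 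This is the point where I rely on the GSF integral calculus rather than on any uniform bound on $f_\eps(n)$.

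For the convergence statement~(i) I cannot argue by monotone convergence, since in $\rcrho$ a bounded increasing hypersequence may fail to converge for lack of a supremum (cf.\ the remark after Thm~\ref{thm:directComp}); instead I verify the Cauchy criterion \cite[Thm.~37]{MTAG} for the partial hypersums $S_N:=\sum_{n=0}^{N}f(n)$. For $M\le N$ in $\hyperN{\sigma}$, summing the $\eps$-wise bounds over $n=\nint{(M)}_\eps+1,\dots,\nint{(N)}_\eps$ (valid for $\eps\le\eps_0$, hence in $\rcrho$) gives $\int_{M+1}^{N+1}f\le S_N-S_M\le\int_{M}^{N}f$, so that $\left|S_N-S_M\right|\le\left|\int_M^N f\right|+g$, where $g:=\int_M^N f-\int_{M+1}^{N+1}f=\int_M^{M+1}f-\int_N^{N+1}f\ge0$. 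Now $\int_M^N f=F(N)-F(M)$ becomes infinitesimal once $M,N$ exceed a suitable threshold, because the existence of $\int_0^{+\infty}f(x)\,\diff{x}$ means the hypersequence $N\mapsto\int_0^N f$ converges and is hence Cauchy; and $g$ is infinitesimal by the hypothesis $\hyperlim{\rho}{\sigma}f(n)=0$, since \eqref{eq:decr1}--\eqref{eq:decr2} squeeze $g$ between $f(M+1)-f(N)$ and $f(M)-f(N+1)$. Thus $\left|S_N-S_M\right|<\diff{\rho}^q$ for $M,N$ large, $(S_N)$ is Cauchy, and $\hypersum{\rho}{\sigma}f(n)$ converges.

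For the divergence statement~(ii) I would use only the lower half of the sandwich: $S_N=f(0)+\sum_{n=1}^{N}f(n)\ge f(0)+\int_1^{N+1}f=\left(f(0)-\int_0^1 f\right)+\int_0^{N+1}f$, where the parenthesised term is a fixed number of $\rcrho$. Since by hypothesis $\int_0^{+\infty}f(x)\,\diff{x}=+\infty$, i.e.\ $\hyperlimarg{\rho}{\sigma}{N}\int_0^N f=+\infty$ in the sense of Def.~\ref{def:imprIntegral}, the right-hand side tends to $+\infty$, and hence so does $S_N$, giving $\hypersum{\rho}{\sigma}f(n)=+\infty$. The main obstacle throughout is not the elementary estimate but the careful bookkeeping of the ``for $\eps$ small'' quantifiers together with the hypersum definition, and above all the replacement of the usual monotone-convergence argument by the Cauchy criterion, which is forced by the failure of the least-upper-bound property in $\rcrho$.
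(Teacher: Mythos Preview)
Your proposal is correct and follows essentially the same route as the paper: establish the $\eps$-wise integral sandwich from \eqref{eq:decr1}--\eqref{eq:decr2}, lift it to $\rcrho$ via Thm.~\ref{thm:existenceUniquenessPrimitives} to obtain both $(f(n))_n\in\rcrhos$ and the Cauchy criterion for the partial hypersums, and read off divergence from the lower bound.

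One small remark on a difference worth noticing. The paper's upper estimate is $\sum_{n=N}^{M}f(n)\le f(N)+\int_N^M f$, which is why the hypothesis $\hyperlim{\rho}{\sigma}f(n)=0$ is invoked there. Your upper estimate $S_N-S_M\le\int_M^N f$ (obtained directly from \eqref{eq:decr2}) is slightly sharper: since both $\int_{M+1}^{N+1}f=F(N+1)-F(M+1)$ and $\int_M^N f=F(N)-F(M)$ become infinitesimal by the Cauchy property of $N\mapsto\int_0^N f$ alone, your sandwich already forces $S_N-S_M\to 0$ without ever touching $\hyperlim{\rho}{\sigma}f(n)=0$. Your detour through $g=\int_M^{M+1}f-\int_N^{N+1}f$ and the squeeze $f(M+1)-f(N)\le g\le f(M)-f(N+1)$ is correct, but unnecessary given the bounds you have already set up; you may simply drop that step.
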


\begin{proof}
From \eqref{eq:intEps}, for all $N$, $M\in\hypNr$ we have
\[
\int_{N}^{M+1}f(x)\,\diff{x}=\left[\int_{N_{\eps}}^{M_{\eps}+1}f_{\eps}(x)dx\right].
\]
Without loss of generality, we can assume $N_{\eps}$, $M_{\eps}\in\N$
for all $\eps$. Thereby
\begin{align*}
\int_{N}^{M+1}f(x)dx & =\left[\sum_{n=N_{\eps}}^{M_{\eps}}\int_{n}^{n+1}f_{\eps}(x)\,\diff{x}\right]\le\\
 & =\left[\sum_{n=N_{\eps}}^{M_{\eps}}\int_{n}^{n+1}f_{\eps}(n)\,\diff{x}\right]=\sum_{n=N}^{M}f(n),
\end{align*}
where we used Def.~\ref{def:hyperseries} of hypersum and \eqref{eq:decr1}.
On the other hand, since $N\in\hypNs\subseteq[0,+\infty)\subseteq\RC{\sigma}$,
using \eqref{eq:decr2} we have

\begin{align*}
\sum_{n=N}^{M}f(n) & =f(N)+\sum_{n=N+1}^{M}\int_{n-1}^{n}f(n)\,\diff{x}=f(N)+\left[\sum_{n=N_{\eps}+1}^{M_{\eps}}\int_{n-1}^{n}f_{\eps}(n)\,\diff{x}\right]\le\\
 & \le f(N)+\left[\sum_{n=N_{\eps}+1}^{M_{\eps}}\int_{n-1}^{n}f_{\eps}(x)\,\diff{x}\right]=f(N)+\int_{N}^{M}f(x)\,\diff{x},
\end{align*}
where we used again Def.~\ref{def:hyperseries} of hypersum and \eqref{eq:intEps}.
Combining these two inequalities yields

\begin{equation}
\int_{N}^{M+1}f(x)\,\diff{x}\leq\sum_{n=N}^{M}f(n)\leq f(N)+\int_{N}^{M}f(x)\,\diff{x}.\label{eq:intTestCauchy}
\end{equation}
These inequalities show that $(f(n))_{n}\in\rcrhos$. Now, we consider
that $\sum_{n=0}^{M}f(n)-\sum_{n=0}^{N}f(n)=\sum_{n=N}^{M}f(n)$ for
$M\ge N$, and
\[
\int_{N}^{M}f(x)\,\diff{x}=-\int_{0}^{N}f(x)\,\diff{x}+\int_{0}^{M}f(x)\,\diff{x}.
\]
Therefore, if $\exists\int_{0}^{+\infty}f(x)\,\diff{x}$ and $\hyperlimarg{\rho}{\sigma}{n}f(n)=0$,
letting $N$, $M\rightarrow+\infty$ in \eqref{eq:intTestCauchy}
proves that our hyperseries satisfies the Cauchy criterion. If $\int_{0}^{+\infty}f(x)\,\diff{x}=+\infty$,
then setting $N=0$ in the first inequality of \eqref{eq:intTestCauchy}
and $M\to+\infty$ we get the second conclusion.
\end{proof}
Note that, generalizing by contradiction the classical proof, we only
have
\[
\exists\int_{0}^{+\infty}f(x)\,\diff{x}\text{ and }\exists\hyperlimarg{\rho}{\sigma}{n}\left|f(n)\right|=:L\ \ \Rightarrow\ \ L\text{ is not invertible}
\]
and not $L=0$ like in classical case.

As usual, from the integral test we can also deduce the $p$-series
test:
\begin{cor}
\label{cor:p-seriesIntTest}If $p\in\rcrho_{>1}$, then the $p$-hyperseries
converges.
\end{cor}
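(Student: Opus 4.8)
The plan is to apply the integral test (Theorem~\ref{thm:intTest}) to the specific generalized smooth function $f(x) = (1+x)^{-p}$, for which the relevant improper integral can be computed explicitly via a primitive. The corollary claims convergence of the $p$-hyperseries for $p \in \rcrho_{>1}$ \emph{for arbitrary gauges} $\sigma$, $\rho$, which is exactly the improvement over Theorem~\ref{thm:pSeriesConv} (whose conclusion was restricted to a particular gauge $\tau$ built from the $\eps$-wise convergence machinery of Theorem~\ref{thm:epsConv}). So the whole point of routing through the integral test is to avoid that gauge restriction.

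\medskip

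First I would exhibit a suitable GSF. The cleanest choice is $f = [f_\eps(-)]$ with $f_\eps(x) := (1+x)^{-p_\eps}$ on $[0,+\infty)$, where $p = [p_\eps] \in \rcrho_{>1}$ with $p_\eps > 1$ for all $\eps$; this is a net of smooth functions on $\R$ (shifting by $1$ keeps us away from the singularity at the origin and makes the derivatives $\rho$-moderate since $p$ is finite, so $f \in \gsfud{\rho}{\sigma}([0,+\infty),\rcrho)$). Next I would verify the two monotonicity hypotheses \eqref{eq:decr1} and \eqref{eq:decr2} of Theorem~\ref{thm:intTest}: since each $f_\eps$ is decreasing on $[0,+\infty)$, for $x \in [n,n+1)$ we have $f_\eps(x) \le f_\eps(n)$, and for $x \in [n-1,n)$ we have $f_\eps(n) \le f_\eps(x)$, so both hold (indeed with $\forall\eps$, not merely $\forall^0\eps$). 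This already gives $(f(n))_n = [(1+n)^{-p_\eps}]_{\text{s}} \in \rcrhos$ from the theorem.

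\medskip

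Then I would compute the improper integral explicitly using the primitive formula \eqref{eq:intEps}. For each $\eps$, since $p_\eps > 1$,
\[
\int_0^{N_\eps} (1+x)^{-p_\eps}\,\diff{x} = \frac{1}{p_\eps - 1}\left(1 - (1+N_\eps)^{1-p_\eps}\right),
\]
so that $\int_0^N f(x)\,\diff{x} = \left[\frac{1}{p_\eps-1}\left(1 - (1+N_\eps)^{1-p_\eps}\right)\right]$ for $N = [N_\eps] \in \hypNr$. As $N \to +\infty$ (i.e.\ $N_\eps \to +\infty$) the term $(1+N_\eps)^{1-p_\eps} \to 0$ sharply, by the same geometric-type estimate used in Example~\ref{exa:GeomExpSeries}.\ref{enu:geomSeries}, so $\exists \int_0^{+\infty} f(x)\,\diff{x} = \frac{1}{p-1} \in \rcrho$. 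Likewise $\hyperlimarg{\rho}{\sigma}{n} f(n) = \hyperlimarg{\rho}{\sigma}{n}(1+n)^{-p} = 0$ because $n^{-p}$ tends sharply to zero when $p$ is invertible and positive (the term $(1+n)^{-p_\eps} < \diff{\rho}^q$ as soon as $n$ is large enough relative to $q$). Both hypotheses of part~(i) of Theorem~\ref{thm:intTest} are then met, yielding convergence of $\hypersum{\rho}{\sigma} f(n) = \parthypersumarg{\rho}{\sigma}{\hspace{1em}n}{\ge1}\frac{1}{n^p}$ after the harmless index shift handled by Corollary~\ref{cor:convAddRem}.

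\medskip

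The main obstacle I anticipate is \emph{not} the convergence argument itself but the bookkeeping connecting $f(n) = (1+n)^{-p}$ to the actual $p$-hyperseries term $n^{-p}$, and the sharp-limit verification that $\hyperlimarg{\rho}{\sigma}{n} n^{-p} = 0$ holds for an \emph{arbitrary} gauge $\sigma$ rather than merely the constructed one. The delicate point is that $p$ is a generalized number: the estimate $(1+N_\eps)^{1-p_\eps} \le \diff{\rho}^q$ must be reduced to a condition on $N$ that can be met within $\hyperN{\sigma}$ for every $\sigma$, using that $p > 1$ means $p - 1$ is invertible and positive, hence $\log(1+N_\eps) \cdot (p_\eps - 1)$ can be driven past $q\,\abs{\log \rho_\eps}$ by choosing $N_\eps$ large but still $\sigma$-moderate. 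Once one checks that the logarithmic growth needed is achievable for any gauge (exactly as in the geometric hyperseries example, where no relation between $\sigma$ and $\rho$ was required), the remaining steps are routine.
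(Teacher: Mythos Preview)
Your proposal follows essentially the same argument as the paper's proof: both apply the integral test to $f(x)=x^{-p}$ (you shift by $1$ so the domain is $[0,+\infty)$ as Thm.~\ref{thm:intTest} literally requires, while the paper just works on $[1,+\infty)$), verify the decrease conditions \eqref{eq:decr1}--\eqref{eq:decr2} from pointwise monotonicity of $f_\eps$, and compute the primitive to obtain $\int^{+\infty} x^{-p}\,\diff{x}=\frac{1}{p-1}$. One small correction: your parenthetical claim that in the geometric hyperseries example ``no relation between $\sigma$ and $\rho$ was required'' is inaccurate---Example~\ref{exa:GeomExpSeries}.\ref{enu:geomSigma} explicitly assumes $\sigma\le\rho^{*}$; the paper's own proof handles the hyperlimit of $N^{1-p}$ just as briefly as you do, simply asserting it vanishes ``because $p>1$''.
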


\begin{proof}
Since $p=[p_{\eps}]>1$, we can assume that $p_{\eps}>1$ for all
$\eps\le\eps_{0}$. For these $\eps$, if $x\in[n,n+1)_{\R}$, we
also have $\frac{1}{x^{p_{\eps}}}\le\frac{1}{n^{p_{\eps}}}$, which
proves \eqref{eq:decr1}. Similarly, we can prove \eqref{eq:decr2}.
Note that the function $f(x)=\frac{1}{x^{p}}$ for all $x\in[1,+\infty)\subseteq\RC{\sigma}$
is $\rho$-moderate because $x\ge1$ and $p>1$, so that $\left|f(x)\right|\le1$.
Finally, like in the classical case, we have $\int_{1}^{+\infty}\frac{\diff{x}}{x^{p}}=\frac{1}{1-p}\hyperlimarg{\rho}{\rho}{N}\left(\frac{1}{N^{p-1}}-1\right)=\frac{1}{p-1}$
because $p>1$.
\end{proof}
\noindent Note, however, that our proofs in Sec.~\ref{subsec:pSeries}
are independent from the notion of generalized smooth function.

\section{\label{sec:Cauchy-product}Cauchy product of hyperseries}

We recall that, on the basis of Thm.~\ref{thm:module}, the operations
of sum and product by a number in $\rcrho$ are the sole operations
defined in $\rcrhos$. We now consider the classical Cauchy product:
\begin{defn}
The \emph{Cauchy product of two sequences for hypersums} $\left(a_{n}\right)_{n}$,
$\left(b_{n}\right)_{n}\in\rcrhos$ is defined as
\[
\left(a_{n}\right)_{n}\star\left(b_{n}\right)_{n}:=\left(\sum_{k=0}^{n}a_{k}b_{n-k}\right)_{n}.
\]
The \emph{Cauchy product of two hyperseries} is the hyperlimit of
the hypersums with general term $\left(a_{n}\right)_{n}\star\left(b_{n}\right)_{n}$,
assuming that it exists:
\[
\left(\hypersum{\rho}{\sigma}a_{n}\right)*\left(\hypersum{\rho}{\sigma}b_{n}\right):=\hypersum{\rho}{\sigma}\left(a_{n}\right)_{n}\star\left(b_{n}\right)_{n}.
\]
A similar \emph{product of sequences} can be defined in $\rcrhou$
with
\[
\left\{ a_{n}\right\} _{n}\star\left\{ b_{n}\right\} _{n}:=\left\{ \sum_{k=0}^{n}a_{k}b_{n-k}\right\} _{n}.
\]
\end{defn}

\begin{thm}
\label{thm:ringCauchy}The module $\rcrhos$ is a ring with respect
to the Cauchy product of sequences. The ring $\rcrhou$ is also a
ring with respect to the Cauchy product of sequences.
\end{thm}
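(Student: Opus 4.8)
The plan is to build the ring structure on top of the $\rcrho$-module structure already supplied by Thm.~\ref{thm:module}: the abelian group $(\rcrhos,+)$ and the distributive law of $\star$ over $+$ are automatic once $\star$ is shown to be well defined and bilinear, so the task reduces to (a) closure, i.e.\ that the Cauchy product of two elements of $\rcrhos$ is again moderate over hypersums; (b) compatibility of $\star$ with the congruence $\sim_{\sigma\rho}$; and (c) commutativity, associativity and existence of a unit. The items in (c) are purely formal, since the identities $(a\star b)_n=\sum_{k=0}^{n}a_kb_{n-k}=\sum_{k=0}^{n}a_{n-k}b_k=(b\star a)_n$ and $((a\star b)\star c)_n=\sum_{i+j+k=n}a_ib_jc_k=(a\star(b\star c))_n$ already hold at the level of the nets $(a_{n\eps})$ and therefore descend to $\rcrhos$ once closure is known; the unit is the sequence $e:=(1,0,0,\dots)$, i.e.\ $e_{n\eps}=\delta_{0n}$, which lies in $\rcrhos$ and satisfies $(a\star e)_n=a_n$. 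Thus the mathematical content sits entirely in (a) and (b).

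The key device for both is a \emph{uniform moderateness of partial sums}: if $(a_n)_n=[a_{n\eps}]\in\rcrhos$, then for every $R\in\N$ there is $Q\in\N$ with
\[
\forall^{0}\eps\,\forall L\in\N,\ L\le\sigma_\eps^{-R}:\quad\Bigl|\sum_{m=0}^{L}a_{m\eps}\Bigr|\le\rho_\eps^{-Q}.
\]
I would prove this by an overspill/selection argument against Def.~\ref{def:RCringHyperseries}: if it failed for some $R$, then for each $Q$ one could pick arbitrarily small $\eps$ and an index $L_\eps\le\sigma_\eps^{-R}$ realizing the maximum of $|\sum_{m=0}^{L}a_{m\eps}|$ and exceeding $\rho_\eps^{-Q}$; the resulting $N:=[L_\eps]\in\hyperN{\sigma}$ would produce a non-$\rho$-moderate hypersum $\sum_{n=0}^{\nint{(N)}_\eps}a_{n\eps}$, contradicting $(a_n)_n\in\rcrhos$. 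Writing $a_{n\eps}=A_{n\eps}-A_{n-1,\eps}$ with $A_{L\eps}:=\sum_{m=0}^{L}a_{m\eps}$ then gives a uniform bound $|a_{n\eps}|\le 2\rho_\eps^{-Q}$ on the terms for all $n\le\sigma_\eps^{-R}$.

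For closure (a) I would set $c_{n\eps}:=\sum_{k=0}^{n}a_{k\eps}b_{n-k,\eps}$ and, for $M=[M_\eps]\in\hyperN{\sigma}$, use the square-minus-triangle identity
\[
\sum_{n=0}^{M_\eps}c_{n\eps}=A_{M_\eps}B_{M_\eps}-\!\!\sum_{\substack{0\le i,j\le M_\eps\\ i+j>M_\eps}}\!\!a_{i\eps}b_{j\eps},
\]
where $A$, $B$ are the partial sums of $a$, $b$. The term $A_{M_\eps}B_{M_\eps}$ is $\rho$-moderate because $M\in\hyperN{\sigma}$ and $(a_n)_n,(b_n)_n\in\rcrhos$, while the remainder has at most $\tfrac12(M_\eps+1)^2$ summands, each controlled by the uniform term estimate, hence is dominated by $\tfrac12(M_\eps+1)^2\rho_\eps^{-Q_a-Q_b}$. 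This is precisely the step where the \emph{number} of addends must be tamed, and I expect it to be the main obstacle: it is $\rho$-moderate exactly when $M_\eps^2$ is, i.e.\ when $\sigma\ge\rho^{*}$. Indeed $(a_n)_n=[(-1)^n]$ belongs to $\rcrhos$ for every gauge, yet its Cauchy square $[(n+1)(-1)^n]$ has hypersums of order $M_\eps$, which escape $\R_\rho$ as soon as $\sigma\not\ge\rho^{*}$; so the closure—and hence the whole theorem—rests on the relation $\sigma\ge\rho^{*}$, which I would take as the governing hypothesis here.

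For the congruence (b), by bilinearity of $\star$ at the net level it suffices to show that $(\delta_n)_n:=(a_n-\bar a_n)_n\sim_{\sigma\rho}0$ and $(b_n)_n\in\rcrhos$ force $(\delta_n)_n\star(b_n)_n\sim_{\sigma\rho}0$ (the second-argument case being symmetric). Taking $N=0$ in \eqref{eq:negligibleHyperseries} and rerunning the selection argument yields uniform $\rho$-negligibility of the partial sums $D_{L\eps}:=\sum_{m=0}^{L}\delta_{m\eps}$, hence $|\delta_{n\eps}|\le 2\rho_\eps^{q}$ uniformly over $n\le\sigma_\eps^{-R}$ for every $q$. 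Feeding these negligible bounds into the same square-minus-triangle estimate—now $D_{M_\eps}B_{M_\eps}$ is negligible and the remainder is bounded by $\tfrac12(M_\eps+1)^2\rho_\eps^{q-Q_b}$, again over moderately-many terms thanks to $\sigma\ge\rho^{*}$—shows $\sum_{n=0}^{M}(\delta\star b)_n\sim_\rho0$ for all $M\in\hyperN{\sigma}$, and subtracting two such sums gives $\sum_{n=N}^{M}(\delta\star b)_n\sim_\rho0$, which is \eqref{eq:negligibleHyperseries} for the products. The $\rcrhou$ statement then follows along the very same lines, with the simplification that the uniform moderateness built into $\{a_n\}_n,\{b_n\}_n$ furnishes the term bounds $|a_{n\eps}|,|b_{n\eps}|\le\rho_\eps^{-Q}$ directly; as above, the only substantive point is the moderateness over hypersums of the triangular partial sums, controlled exactly as in (a).
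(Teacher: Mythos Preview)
Your approach is considerably more careful than the paper's, and in fact you have put your finger on a real issue that the paper's proof glosses over.

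The paper's argument is extremely brief: it writes down the identity
\[
\sum_{k=n}^{m}a_{k}b_{n-k}-\sum_{k=n}^{m}\bar a_{k}\bar b_{n-k}=(A_{nm}-\bar A_{nm})B_{nm}+\bar A_{nm}(B_{nm}-\bar B_{nm}),
\]
with $A_{nm}=\sum_{k=n}^{m}a_{k}$, $B_{nm}=\sum_{k=n}^{m}b_{k}$, and asserts that this settles both moderateness and independence of representatives. But this identity is not correct as written (the left-hand side is a convolution-type sum, the right-hand side a product of sums), and the paper says nothing about why the \emph{hypersums} $\sum_{n=0}^{N}c_{n}$ of the Cauchy product are $\rho$-moderate for arbitrary $N\in\hypNs$. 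Your ``square-minus-triangle'' decomposition is the natural way to analyse that hypersum, and your observation that the number of off-diagonal terms is of order $M_\eps^{2}$ pinpoints exactly where a gauge condition is needed.

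Your counterexample is decisive: $(a_{n})_{n}=[(-1)^{n}]$ lies in $\rcrhos$ for \emph{any} pair of gauges (its partial sums are $0$ or $1$), yet $(a\star a)_{n}=(n+1)(-1)^{n}$ has hypersums of order $N$, which are $\rho$-moderate for all $N\in\hypNs$ precisely when $\sigma\ge\rho^{*}$ (cf.\ Lem.~\ref{lem:1}). So closure of $\rcrhos$ under $\star$---and hence the ring statement---does need the hypothesis $\sigma\ge\rho^{*}$, exactly as you surmised. Under that hypothesis your outline goes through: the overspill/selection argument gives the uniform partial-sum bound (this is the substantive step the paper omits), and then both closure and compatibility with $\sim_{\sigma\rho}$ follow from your estimates. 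For $\rcrhou$ the uniform term bounds are built in, so the argument simplifies as you say, and there the paper's claim is unproblematic. In short: your route is different from, and more honest than, the paper's; it exposes a missing hypothesis in the $\rcrhos$ half of the theorem.
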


\begin{proof}
Let $\left(a_{n}\right)_{n}$, $\left(b_{n}\right)_{n}\in\rcrhou$,
$(a_{n\eps})_{n,\eps}\sim_{\sigma\rho}(\bar{a}_{n\eps})_{n,\eps}$,
$(b_{n\eps})_{n,\eps}\sim_{\sigma\rho}(\bar{b}_{n\eps})_{n,\eps}$.
We first prove that the Cauchy product $\left(a_{n}\right)_{n}\star\left(b_{n}\right)_{n}$
is well-defined in $\rcrhos$. For simplicity, for $n$, $m\in\hypNs$,
set $A_{nm}:=\sum_{k=n}^{m}a_{k}$, $B_{nm}:=\sum_{k=n}^{m}b_{k}$
and similar notations $\bar{A}_{nm}$, $\bar{B}_{nm}$ using the other
representatives. Then
\[
\sum_{k=n}^{m}a_{k}b_{n-k}-\sum_{k=n}^{m}\bar{a}_{k}\bar{b}_{n-k}=(A_{nm}-\bar{A}_{nm})B_{nm}+\bar{A}_{nm}(B_{nm}-\bar{B}_{nm}).
\]
Now, $A_{nm}-\bar{A}_{nm}$ and $B_{nm}-\bar{B}_{nm}$ are $\rho$-negligible
by definition \eqref{eq:negligibleHyperseries} of $\sim_{\sigma\rho}$,
and $B_{nm}$, $\bar{A}_{nm}$ are $\rho$-moderate because of Lem.~\ref{thm:seriesWellDef}.
This proves that the Cauchy product $\star$ in $\rcrhos$ is well-defined.
Similarly, we can proceed with $\rcrhou$. The ring properties now
follow from the same properties of the ring $\rcrho$.
\end{proof}
\noindent It is well-known that for each $n\in\N$, we have $\left(a_{n}\right)_{n}\star\left(b_{n}\right)_{n}=\left(\sum_{i=0}^{n}a_{i}\right)\cdot\left(\sum_{j=0}^{n}b_{j}\right)=\sum_{i=0}^{n}\sum_{j=0}^{n}a_{i}b_{j}=:c_{n}$,
so $(c_{n})_{n}\in\rcrhos$ by Thm.~\ref{thm:ringCauchy}. Moreover,
if $\hypersum{\rho}{\sigma}a_{n}$ and $\hypersum{\rho}{\sigma}b_{n}$
converge, taking the hyperlimit of the product of the two partial
sums we obtain
\begin{align*}
\left(\hypersum{\rho}{\sigma}a_{n}\right)\cdot\left(\hypersum{\rho}{\sigma}b_{n}\right) & =\hyperlimarg{\rho}{\sigma}{N}c_{N}=\\
 & =\hyperlimarg{\rho}{\sigma}{N}\sum_{i=0}^{N}\sum_{j=0}^{N}a_{i}b_{j}
\end{align*}
but, in general, this is different from
\begin{align*}
\hypersum{\rho}{\sigma}c_{n} & =\hyperlimarg{\rho}{\sigma}{N}\sum_{n=0}^{N}c_{n}=\hyperlimarg{\rho}{\sigma}{N}\sum_{n=0}^{N}\sum_{i=0}^{n}\sum_{j=0}^{i}a_{i}b_{j}=\\
 & =\hyperlimarg{\rho}{\sigma}{N}\sum_{n=0}^{N}\sum_{k=0}^{n}a_{k}b_{n-k}=\left(\hypersum{\rho}{\sigma}a_{n}\right)*\left(\hypersum{\rho}{\sigma}b_{n}\right).
\end{align*}
In other words, if the Cauchy product of these hyperseries exists,
in general we have
\[
\left(\hypersum{\rho}{\sigma}a_{n}\right)*\left(\hypersum{\rho}{\sigma}b_{n}\right)\ne\left(\hypersum{\rho}{\sigma}a_{n}\right)\cdot\left(\hypersum{\rho}{\sigma}b_{n}\right).
\]

The classical Mertens' theorem also holds for hyperseries:
\begin{thm}
\label{thm:Cauchy}Assume that the hyperseries $\hypersum{\rho}{\sigma}a_{n}$
converges to $A$ and $\hypersum{\rho}{\sigma}b_{n}$ converges to
$B$. Assume that the former hyperseries converges absolutely. Then
their Cauchy product converges to $AB$.
\end{thm}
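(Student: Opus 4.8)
The plan is to adapt the classical proof of Mertens' theorem, performing the combinatorial rearrangement $\eps$-wise and then controlling the resulting remainder in the sharp topology. Write $A_{N}:=\sum_{n=0}^{N}a_{n}$, $B_{N}:=\sum_{n=0}^{N}b_{n}$, $C_{N}:=\sum_{n=0}^{N}c_{n}$ for $N\in\hypNs$, where $c_{n}=\sum_{k=0}^{n}a_{k}b_{n-k}$; recall $(c_{n})_{n}\in\rcrhos$ by Thm.~\ref{thm:ringCauchy}, so $C_{N}\in\rcrho$ is well defined. Since for each fixed $\eps$ the sums $\sum_{n=0}^{\nint{(N)}_{\eps}}c_{n\eps}$ are ordinary finite sums, the classical interchange of summation gives, $\eps$-wise and hence in $\rcrho$, the identity $C_{N}=\sum_{k=0}^{N}a_{k}B_{N-k}$. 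Writing $\beta_{m}:=B-B_{m}$, this yields $C_{N}-A_{N}B=-\sum_{k=0}^{N}a_{k}\beta_{N-k}=:-\gamma_{N}$. Because $A_{N}\to A$ forces $A_{N}B\to AB$, the whole theorem reduces to proving $\hyperlimarg{\rho}{\sigma}{N}\gamma_{N}=0$.

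For the remainder I would mimic the classical splitting. Fix $q\in\N$. The absolute convergence hypothesis gives $\alpha:=\hypersum{\rho}{\sigma}|a_{n}|\in\rcrho$, and since its partial hypersums are increasing with hyperlimit $\alpha$ (Thm.~\ref{thm:orderedModule}.\ref{enu:positiveIncrease}) we have $\sum_{k=0}^{N}|a_{k}|\le\alpha$ for every $N\in\hypNs$; let $Q_{\alpha}$ be a moderateness degree of $\alpha$. Using $\hyperlimarg{\rho}{\sigma}{m}\beta_{m}=0$, pick $M\in\hypNs$ with $|\beta_{m}|\le\diff{\rho}^{\,q+Q_{\alpha}+1}$ for all $m\in\hypNs_{\ge M}$, and split $\gamma_{N}=\sum_{N-k\ge M}a_{k}\beta_{N-k}+\sum_{N-k<M}a_{k}\beta_{N-k}$. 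The first sum is at most $\alpha\cdot\diff{\rho}^{\,q+Q_{\alpha}+1}\le\tfrac12\diff{\rho}^{\,q}$, while the second is at most $B^{\ast}\cdot\sum_{k=N-M+1}^{N}|a_{k}|$ with $B^{\ast}:=\sup_{0\le\ell<M}|\beta_{\ell}|$. By the Cauchy criterion \cite[Thm.~37]{MTAG} for the convergent $\hypersum{\rho}{\sigma}|a_{n}|$ there is $K\in\hypNs$ with $\sum_{k=N-M+1}^{N}|a_{k}|\le\diff{\rho}^{\,q+Q_{B^{\ast}}+1}$ for all $N\ge K+M$ (here $Q_{B^{\ast}}$ is a moderateness degree of $B^{\ast}$), so the second sum is also $\le\tfrac12\diff{\rho}^{\,q}$ and thus $|\gamma_{N}|\le\diff{\rho}^{\,q}$ for $N\ge K+M$, as required.

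The hard part, and the only genuinely non-Archimedean step, is to make these two estimates legitimate, since $|\beta_{m}|\le\diff{\rho}^{p}$ ``for all $m\ge M$'' is a sharp-topology (class-level) statement whereas the bounds on $\gamma_{N}=\bigl[\sum_{k=0}^{N_{\eps}}a_{k\eps}\beta_{(N_{\eps}-k)\eps}\bigr]$ must be read $\eps$-wise and uniformly over the range $0\le N_{\eps}-k\le N_{\eps}$. I would upgrade the hyperlimit to uniform $\eps$-wise decay by the supremum/argmax device of \cite{MTAG}: setting $k_{\eps}:=\mathrm{argmax}_{M_{\eps}\le j\le N_{\eps}}|\beta_{j\eps}|$ produces a hyperfinite index $[k_{\eps}]\ge M$, so $|\beta_{[k_{\eps}]}|\le\diff{\rho}^{p}$ forces $\max_{M_{\eps}\le j\le N_{\eps}}|\beta_{j\eps}|\le\rho_{\eps}^{p}$ for $\eps$ small. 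The same device shows $B^{\ast}$ is a well-defined element of $\rcrho$: realizing the $\eps$-wise maximum $\max_{\ell<M_{\eps}}|\beta_{\ell\eps}|$ at an argmax index $N^{\ast}=[N^{\ast}_{\eps}]\in\hypNs$ identifies it, up to $|B|$, with $|B_{N^{\ast}}|$, which is $\rho$-moderate precisely because $(b_{n})_{n}\in\rcrhos$. With these two facts the $\eps$-wise estimates close, giving $\gamma_{N}\to0$ and hence $C_{N}\to AB$, i.e.~the Cauchy product converges to $AB$.
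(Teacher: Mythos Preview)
Your argument is correct and follows the same Mertens-type strategy as the paper: rewrite $C_{N}$ as $\sum_{k=0}^{N}a_{k}B_{N-k}$, subtract $A_{N}B$, and split the remainder at a threshold $M\in\hypNs$. The two proofs differ only in bookkeeping. For the ``few terms, large $\beta$'' piece the paper bounds each $|a_{n-i}|$ individually using $a_{n}\to 0$ together with a uniform bound $|B_{i}-B|\le K$ supplied by Lemma~\ref{lem:Cauchy}, whereas you bound the whole block at once via the Cauchy criterion for $\hypersum{\rho}{\sigma}|a_{n}|$ and your $B^{\ast}$; your $B^{\ast}$ is exactly the role played by the paper's $K$, and your argmax construction of it is essentially a local version of Lemma~\ref{lem:Cauchy}. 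Your third paragraph, making the $\eps$-wise uniformity explicit through the argmax device, is actually more careful than the paper, which passes from the class-level bound $|B_{i}-B|\le\diff{\rho}^{q}/(\bar{A}+1)$ for $i\ge N$ directly to a bound on the hyperfinite sum $\sum_{i=N}^{n}|a_{n-i}||B_{i}-B|$ without spelling out this step.
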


\noindent We first need the following
\begin{lem}
\label{lem:Cauchy}If $(b_{n})_{n}$ converges to $B$, then
\begin{equation}
\exists K\in\rcrho\,\forall N\in\hypNs:\ \left|\sum_{k=0}^{N}b_{k}-B\right|\le K.\label{eq:Cauchy}
\end{equation}
\end{lem}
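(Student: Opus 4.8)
The statement is the $\rcrho$-version of the classical fact that a convergent sequence is bounded: writing $S_N:=\sum_{k=0}^{N}b_k\in\rcrho$ for the partial hypersums, the hypothesis ``$(b_n)_n$ converges to $B$'' means $B=\hypersum{\rho}{\sigma}b_n$, i.e.\ the hypersequence $N\in\hyperN{\sigma}\mapsto S_N$ has hyperlimit $B$ in the sense of \eqref{eq:hyperLimConvHyperSer}. We must produce a \emph{single} $K\in\rcrho$ bounding $\left|S_N-B\right|$ uniformly in $N\in\hyperN{\sigma}$. The plan is to separate a ``tail'' contribution (large $N$), controlled directly by convergence, from a ``head'' contribution (small $N$), controlled by an $\eps$-wise finite maximum. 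Concretely, I would first apply \eqref{eq:hyperLimConvHyperSer} with $q=1$ to fix, once and for all, an $M=[M_\eps]\in\hyperN{\sigma}$ such that $\left|S_N-B\right|<\diff{\rho}$ for every $N\in\hyperN{\sigma}_{\ge M}$.

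Next I would set $K_\eps:=\max_{0\le j\le M_\eps}\left|\sum_{k=0}^{j}b_{k\eps}-B_\eps\right|+2$ and $K:=[K_\eps]$, and check that $K\in\rcrho$. For each $\eps$ the finite maximum is attained at some $j_\eps\in\{0,\dots,M_\eps\}$; since $j_\eps\le M_\eps$ is $\sigma$-moderate, $P:=[j_\eps]\in\hyperN{\sigma}$, and because $(b_n)_n\in\rcrhos$ the hypersum $\sum_{k=0}^{P}b_k=\left[\sum_{k=0}^{j_\eps}b_{k\eps}\right]$ lies in $\rcrho$ by Thm.~\ref{thm:seriesWellDef}. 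Hence the net $\max_{0\le j\le M_\eps}\left|\sum_{k=0}^{j}b_{k\eps}-B_\eps\right|=\left|\sum_{k=0}^{j_\eps}b_{k\eps}-B_\eps\right|$ is $\rho$-moderate (a difference of two $\rho$-moderate nets), so $K\in\rcrho$ with $K\ge 2$. Crucially, $K$ depends only on $M$ and $B$, not on $N$.

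It then remains to show $\left|S_N-B\right|\le K$ for arbitrary $N\in\hyperN{\sigma}$, for which it suffices to prove $\left|\sum_{k=0}^{\nint{(N)}_\eps}b_{k\eps}-B_\eps\right|\le K_\eps$ for $\eps$ small. This is where the main obstacle lies: because the order on $\hyperN{\sigma}$ is only partial, one cannot split cleanly into the two cases $N\le M$ and $N\ge M$ inside $\rcrho$, so the argument must be carried out $\eps$-wise. For the $\eps$ with $\nint{(N)}_\eps\le M_\eps$ the bound is immediate from the definition of the maximum. For the $\eps$ with $\nint{(N)}_\eps>M_\eps$ I would introduce $\tilde N:=\left[\max(\nint{(N)}_\eps,M_\eps)\right]$, which satisfies $\tilde N\in\hyperN{\sigma}$ and $\tilde N\ge M$ (as $\tilde N_\eps\ge M_\eps$ for all $\eps$); convergence then gives $\left|S_{\tilde N}-B\right|<\diff{\rho}$, hence $\left|\sum_{k=0}^{\tilde N_\eps}b_{k\eps}-B_\eps\right|\le 2$ for $\eps$ small, and on this set of indices $\tilde N_\eps=\nint{(N)}_\eps$, so the same bound $\le 2\le K_\eps$ holds. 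Combining the two $\eps$-ranges yields $\left|S_N-B\right|\le K$ for every $N$. I expect the two genuinely delicate points to be the $\rho$-moderateness of the $\eps$-wise maximum (resolved by reducing the maximum to the single hypersum indexed by the arg-max $P$) and the passage from the partial-order convergence statement to a pointwise bound on representatives, which the $\max(N,M)$ device circumvents.
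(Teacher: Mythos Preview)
Your proof is correct and follows the same high-level strategy as the paper: fix a threshold from convergence, bound the ``tail'' directly, and handle the ``head'' by an $\eps$-wise case split. The execution differs in two places worth noting. First, the paper takes $K=1+\sum_{k=0}^{N_{1}}|b_{k}|$, whereas you take the $\eps$-wise maximum of the partial sums up to $M_\eps$; your arg-max reduction to a single hypersum indexed by $P\in\hyperN{\sigma}$ is a clean way to certify $K\in\rcrho$ using only $(b_n)_n\in\rcrhos$, without implicitly invoking $(|b_n|)_n\in\rcrhos$. Second, for the case split the paper appeals to the dichotomy law \cite[Lem.~7.(ii)]{MTAG} (considering $N\ge_{L}N_{1}$ versus $N\le_{L}N_{1}$ on each $L\subzero I$), while your $\tilde N=\max(N,M)$ device achieves the same effect more directly and avoids quoting that external lemma. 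Both routes lead to the same conclusion; yours is slightly more self-contained, while the paper's makes the use of dichotomy explicit.
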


\begin{proof}
Set $B_{N}:=\sum_{k=0}^{N}b_{k}\in\rti$ for all $N\in\hypNs$. The
assumption of convergence yields
\begin{equation}
\exists N_{1}\in\hypNs\,\forall M\in\hypNs_{\ge N_{1}}:\ \left|B_{M}-B\right|<1.\label{eq:conv1}
\end{equation}
Set $K=1+\sum_{k=0}^{N_{1}}|b_{k}|\in\rti$ and let $N\in\hypNs$.
We use dichotomy law as in \cite[Lem.~7.(ii)]{MTAG}, i.e.~we consider
$L\subzero I$ and two cases $N\ge_{L}N_{1}$ or $N\le_{L}N_{1}$.
We claim that $\left|B_{N_{\eps}}-B_{\eps}\right|\le K_{\eps}$ for
all $\eps\in L$ sufficiently small, where $N_{\eps}:=\nint{(N)}_{\eps}$,
$N_{1\eps}:=\nint{(N_{1})}_{\eps}$, $(b_{n})_{n}=[b_{n\eps}]_{\text{s}}\in\rcrhos$,
$B_{N_{\eps}}:=\sum_{k=0}^{N_{\eps}}b_{k\eps}$, $B=[B_{\eps}]$ and
$K_{\eps}:=1+\sum_{k=0}^{N_{1\eps}}\left|b_{k\eps}\right|$ (hence
$K=[K_{\eps}]$). We also note that, in general, $P$, $Q\in\hypNs$,
$P\le Q$ implies $\nint{(P)}_{\eps}\le\nint{(Q)}_{\eps}$ for $\eps$
small. In the first case $N\ge_{L}N_{1}$, we get $N_{\eps}\ge N_{1\eps}$
for $\eps$ small. Set $M_{\eps}:=N_{\eps}$ if $\eps\in L$ and $M_{\eps}:=N_{1\eps}$
otherwise, so that $M\in\hypNs_{\ge N_{1}}$ and from \eqref{eq:conv1}
we obtain $\left|B_{M_{\eps}}-B_{\eps}\right|<1$ for $\eps$ small
and, in particular
\[
\forall^{0}\eps\in L:\ \left|B_{N_{\eps}}-B_{\eps}\right|<1\le K_{\eps}.
\]
We claim the same conclusion also in the second case $N\le_{L}N_{1}$,
i.e.
\begin{equation}
\forall^{0}\eps\in L:\ N_{\eps}\le N_{1\eps}.\label{eq:2nd}
\end{equation}
In fact, we have $B_{N}=B_{N_{1}\vee N}-\sum_{k=N+1}^{N_{1}\vee N}b_{k}$
and hence
\begin{align}
\left|B_{N}-B\right| & \le\left|B_{N_{1}\vee N}-B\right|+\sum_{k=N+1}^{N_{1}\vee N}|b_{k}|<1+\sum_{k=0}^{N_{1}\vee N}|b_{k}|\nonumber \\
\forall^{0}\eps:\ \left|B_{N_{\eps}}-B_{\eps}\right| & \le1+\sum_{k=0}^{N_{1\eps}\vee N_{\eps}}|b_{k\eps}|.\label{eq:mah}
\end{align}
Inequalities \eqref{eq:2nd} and \eqref{eq:mah} prove the claim also
in the second case. Therefore, the dichotomy law yields $|B_{N}-B|\le K$
and $K$ does not depend on $N$.
\end{proof}
~
\begin{proof}[Proof of Thm.~\ref{thm:Cauchy}]
Define $A_{n}:=\sum_{i=0}^{n}a_{i}$, $B_{n}:=\sum_{i=0}^{n}b_{i}$
and $C_{n}:=\sum_{i=0}^{n}c_{i}$ for $n\in\hypNs$, and with $c_{i}:=\sum_{k=0}^{i}a_{k}b_{i-k}$
for $i\in\N$. Then $C_{n}=\sum_{i=0}^{n}a_{n-i}B_{i}$ because the
same equality holds $\eps$-wise for any representatives, and hence
$C_{n}=\sum_{i=0}^{n}a_{n-i}(B_{i}-B)+A_{n}B$. The idea is to estimate
\begin{align}
\left|C_{n}-AB\right| & =\left|\sum_{i=0}^{n}a_{n-i}(B_{i}-B)+(A_{n}-A)B\right|\le\nonumber \\
 & \le\sum_{i=0}^{N-1}|a_{n-i}||B_{i}-B|+\sum_{i=N}^{n}|a_{n-i}||B_{i}-B|+|A_{n}-A||B|,\label{eq:CauchySums}
\end{align}
and to use our assumption \eqref{eq:Cauchy} for the first summand.
We start from the second summand in \eqref{eq:CauchySums}: Since
$\hypersum{\rho}{\sigma}a_{n}$ converges absolutely to some $\bar{A}$
and $B_{n}$ converges to $B$, for all $q\in\N$ we can find $N\in\hypNs_{>0}$
such that

\[
\left|B_{n}-B\right|\le\frac{\diff{\rho}^{q}}{\bar{A}+1}
\]
for all $n\geq N$. Therefore, for the same $n\ge N$
\[
\sum_{i=N}^{n}|a_{n-i}||B_{i}-B|\le\diff{\rho}^{q}.
\]
First summand in \eqref{eq:CauchySums}: Since $\hypersum{\rho}{\sigma}a_{n}$
converges, $\left(a_{n}\right)_{n\in\hypNs}$ must converge to zero.
Hence for some $M\in\hypNs$ and for all $n\geq M$ and because of
Lem.~\ref{lem:Cauchy}, we have
\begin{equation}
\left|a_{n}\right|\le\frac{\diff{\rho}^{q}}{N(K+1)}\le\frac{\diff{\rho}^{q}}{N\left(\left|B_{i}-B\right|+1\right)}\quad\forall i\le N-1\label{eq:applAss}
\end{equation}
In particular, if $n\ge M+N$ and $i\le N-1$, then $n-i\ge n-N+1>M$
and we can apply \eqref{eq:applAss} with $n-i$ instead of $n$ obtaining
\[
\sum_{i=0}^{N-1}|a_{n-i}||B_{i}-B|\le\frac{\diff{\rho}^{q}N}{N}.
\]
Finally, since $A_{n}$ converges to $A$, for some $L\in\hypNs$
and for all $n\geq L$, we have

\[
\left|A_{n}-A\right|\le\frac{\diff{\rho}^{q}}{|B|+1}.
\]
Then, for all $n\geq\max\left(L,M+N\right)$ using the aforementioned
estimates, \eqref{eq:CauchySums} yields $\left|C_{n}-AB\right|\le3\diff{\rho}^{q}$.
\end{proof}

\section{Conclusions}

One of the main goals of the present paper is to show that when dealing
with non-Archimedean Cauchy complete rings, it can be worth to consider
summations over infinite natural numbers instead of classical series
indexed by $n\in\N$. As we proved for the Robinson-Colombeau ring
$\rcrho$, this allows us to extend numerous classical results which
do not hold using classical series in a non-Archimedean framework.
We tried to motivate in a clear way why we have to consider two gauges
$\sigma$ and $\rho$ and, for each result, what relationships we
have to consider among them. To help the reader to summarize the condition
on the gauges, we can use the following table
\noindent \begin{center}
\begin{tabular}{|c|c|}
\hline 
\noalign{\vskip0.7mm}
$\rcrhos$ & $\sigma$, $\rho$ arbitrary\tabularnewline
\hline 
\noalign{\vskip0.7mm}
$\rcrhou$ & $\sigma\ge\rho^{*}$\tabularnewline
\hline 
\noalign{\vskip0.7mm}
divergent hypersums & $\rcrhos$ and $\sigma\ge\rho^{*}$\tabularnewline
\hline 
\noalign{\vskip0.7mm}
geometric hyperseries & $\sigma\le\rho^{*}$\tabularnewline
\hline 
\noalign{\vskip0.7mm}
Taylor series exponential & $\sigma\le\rho^{*}$\tabularnewline
\hline 
\noalign{\vskip0.7mm}
$\eps$-wise convergence & $\exists\sigma\le\rho^{*}$ or $\sigma=\rho$\tabularnewline
\hline 
\noalign{\vskip0.7mm}
$p$-series & $\sigma$, $\rho$ arbitrary\tabularnewline
\hline 
\noalign{\vskip0.7mm}
direct comparison test & $\sigma$, $\rho$ arbitrary\tabularnewline
\hline 
\noalign{\vskip0.7mm}
limit comparison test & $\sigma$, $\rho$ arbitrary\tabularnewline
\hline 
\noalign{\vskip0.7mm}
root test & $\rcrhou$ and $\rho^{*}\le\sigma\le\rho^{*}$\tabularnewline
\hline 
\noalign{\vskip0.7mm}
ratio test & $\rcrhou$ and $\rho^{*}\le\sigma\le\rho^{*}$\tabularnewline
\hline 
\noalign{\vskip0.7mm}
alternating series test & $\rcrhou$ and $\sigma\ge\rho^{*}$\tabularnewline
\hline 
\noalign{\vskip0.7mm}
integral test & $\sigma$, $\rho$ arbitrary\tabularnewline
\hline 
\end{tabular}
\par\end{center}

\noindent Therefore, a possible summary could be: for meaningful examples
of convergent series, for all convergence tests, one can assume $\rho^{*}\le\sigma\le\rho^{*}$
and use both $\rcrhou$ or $\rcrhos$. For $\eps$-wise convergence,
to have larger domains (see Example \ref{exa:div}.\ref{enu:div6}),
one can assume $\sigma\le\rho^{*}$ and hence use $\rcrhos$. For
divergent hypersums, one must assume $\sigma\ge\rho^{*}$ and work
in $\rcrhos$. Intuitively, to get
\begin{equation}
\left|\sum_{n=0}^{N}a_{n}-s\right|\le\diff\rho^{q}\label{eq:goal}
\end{equation}
one is forced to ``sum up'' a sufficiently large number $N\in\hypNs$
of summands; how much this number has to be ``large'' depends on
the relationships between the two gauges $\sigma$ and $\rho$: if
$\sigma\le\rho^{*}$ then $\R_{\sigma}\supseteq\R_{\rho}$ and in
$\hypNs$ we surely have sufficiently ``large'' $\sigma$-hypernatural
numbers; whereas, if this is not the case, $\sigma$ can be ``too
large'' with respect to $\rho$ (i.e.~$\sigma^{-1}$ ``too small''
with respect to $\rho^{-1}$, e.g.~we could have $\sigma_{\eps}=\log\left(-\log\rho_{\eps}\right)^{-1}$,
see Example \ref{exa:GeomExpSeries},\ref{enu:geomSigma}) and we
cannot assure to obtain \eqref{eq:goal} because we do not sum up
a sufficient number of summands with respect to $\diff\rho^{q}$.

Since non-Archimedean rings are not Dedekind-complete, we have been
forced to substitute the least-upper-bound property using the weaker
Cauchy completeness. We think that several of the proofs we presented
here can be generalized to other non-Archimedean Cauchy complete settings.

Clearly, the present work opens the actual possibility to start the
study of hyper-power series, (real or complex) hyper-analytic generalized
smooth functions and sigma additivity of multidimensional integration
of generalized smooth functions (see \cite{TI}).

We also finally mention that the embedding of Schwartz's distributions
can be realized by regularization with an \emph{entire} Colombeau
mollifier (see e.g.~\cite{GKOS}). We can hence conjecture that this
embedding yields a hyper-analytic generalized smooth functions (but
note that the hyper-power series of classical non-analytic smooth
functions with a flat point would converge to zero under any invertible
infinitesimal). This conjecture would open the possibility to extend
the Cauchy-Kowalevski theorem to all Schwartz's distributions.

\medskip{}

\noindent \textbf{Acknowledgments} The authors would like to thank
the referee for several suggestions that have led to considerable
improvements of the paper.


\begin{thebibliography}{10}
\bibitem{AFJO}Aragona, J., Fernandez, R., Juriaans, S.O., Oberguggenberger,
M., Differential calculus and integration of generalized functions
over membranes, Monatshefte fur Mathematik 166(1), pp. 1-18, 2012.

\bibitem{BeLuB15}Benci, V., Luperi Baglini, L., A non-archimedean
algebra and the Schwartz impossibility theorem, Monatshefte fur Mathematik,
Vol. 176, 503-520, 2015.

\bibitem{BeMa19}Berarducci, A., Mantova, V., Transseries as germs
of surreal functions, Transactions of the American Mathematical Society
371, pp. 3549-3592, 2019.

\bibitem{BoKh06}Bonar, D.D., Khoury, M.J., \emph{Real Infinite Series}.
The Mathematical Association of America, 2006.

\bibitem{GK13b}Giordano, P., Kunzinger, M., New topologies on Colombeau
generalized numbers and the Fermat-Reyes theorem, Journal of Mathematical
Analysis and Applications 399 (2013) 229\textendash 238. http://dx.doi.org/10.1016/j.jmaa.2012.10.005

\bibitem{GK15}Giordano, P., Kunzinger, M., A convenient notion of
compact sets for generalized functions. Proceedings of the Edinburgh
Mathematical Society, Volume 61, Issue 1, February 2018, pp. 57-92.

\bibitem{GKV}Giordano, P., Kunzinger, M., Vernaeve, H., Strongly
internal sets and generalized smooth functions. Journal of Mathematical
Analysis and Applications, volume 422, issue 1, 2015, pp. 56\textendash 71.

\bibitem{TI}Giordano, P., Kunzinger, M., Vernaeve, H., The Grothendieck
topos of generalized functions I: basic theory. Preprint. See arXiv:2101.04492.

\bibitem{GKOS}Grosser, M., Kunzinger, M., Oberguggenberger, M., Steinbauer,
R., \emph{Geometric theory of generalized functions}, Kluwer, Dordrecht,
2001.

\bibitem{Han05}Hansheng, Y., Another Proof for the $p$-series Test,
The College Mathematics Journal, VOL. 36, NO. 3, 2005.

\bibitem{Kob96}Koblitz, N., \emph{p-adic Numbers, p-adic Analysis,
and Zeta-Functions}, Graduate Texts in Mathematics (Book 58), Springer;
2nd edition, 1996.

\bibitem{LB-Gio17}Luperi Baglini, L., Giordano, P., The category
of Colombeau algebras. Monatshefte für Mathematik, 182(3), pp. 649-674,
2017.

\bibitem{MTAG}Mukhammadiev, A., Tiwari, D., Apaaboah, G., Giordano,
P., Supremum, Infimum and and hyperlimits of Colombeau generalized
numbers. Monatshefte für Mathematik, Vol. 196, pages 163-190, 2021.

\bibitem{ObPiVa07}Oberguggenberger, M., Pilipovi\'{c}, S., Valmorin,
V., Global representatives of Colombeau holomorphic generalized functions,
Monatshefte fur Mathematik 151(1), pp. 67-74, 2007.

\bibitem{Ob-Ve}Oberguggenberger, M., Vernaeve, H., Internal sets
and internal functions in Colombeau theory, Journal of Mathematical
Analysis and Applications, 341 (2008) 649\textendash 659.

\bibitem{Palm}Palmgren, E., Developments in constructive nonstandard
analysis. Bulletin of Symbolic Logic, vol. 4, pp. 233\textendash 272,
1998.

\bibitem{PiScVa09}Pilipovi\'{c}, S., Scarpalezos, D., Valmorin, V.,
Real analytic generalized functions, Monatshefte für Mathematik, (2009)
156: 85.

\bibitem{Rob73}Robinson, A., Function theory on some nonarchimedean
fields, American Mathematical Monthly 80 (6) (1973) 87\textendash 109;
Part II: Papers in the Foundations of Mathematics.

\bibitem{Sha13}Shamseddine, K., A brief survey of the study of power
series and analyticfunctions on the Levi-Civita fields, Contemporary
Mathematics, Volume596, 2013.

\bibitem{Ver08}Vernaeve, H., Generalized analytic functions on generalized
domains, arXiv:0811.1521v1, 2008.

\bibitem{C1}Colombeau, J.F., \emph{New generalized functions and
multiplication of distributions.} North-Holland, Amsterdam, 1984.
\end{thebibliography}
\end{document}